\newtheorem{thm}{Theorem}[section]
\newtheorem{prop}[thm]{Proposition}
\newtheorem{lem}[thm]{Lemma}
\newtheorem{conj}[thm]{Conjecture}
\newtheorem{ex}[thm]{Example}
\newtheorem{case}{Case}[thm]
\newtheorem{cor}[thm]{Corollary}
\newtheorem*{rem}{Remark}
\newtheorem*{Def}{Definition}
\newtheorem*{thm:monotone}{Theorem \ref{thm:monotone}}
\newtheorem*{thm:product}{Theorem \ref{thm:product}}
\newtheorem*{thm:digits}{Theorem \ref{thm:digits}}
\newtheorem*{thm:shift}{Theorem \ref{thm:shift}}
\newcommand{\lb}[0]{\lambda}
\newcommand{\Gm}[0]{\Gamma}
\newcommand{\Gmt}[0]{\Gamma^\ast}
\newcommand{\at}[0]{a^\ast}
\newcommand{\Xt}[0]{\Lambda^\ast}
\newcommand{\Lb}[0]{\Lambda}
\newcommand{\s}[0]{\sigma}
\newcommand{\om}[0]{\omega}
\newcommand{\mA}[0]{\mathcal{A}}
\title{On the classification of Stanley sequences}
\author{David Rolnick\thanks{Massachusetts Institute of Technology, Cambridge MA. Email: \url{drolnick@math.mit.edu}}}
\begin{document}

\maketitle

\begin{abstract}
An integer sequence is said to be \emph{3-free} if no three elements form an arithmetic progression.  Following the greedy algorithm, the \emph{Stanley sequence} $S(a_0,a_1,\ldots,a_k)$ is defined to be the 3-free sequence $\{a_n\}$ having initial terms $a_0,a_1,\ldots,a_k$ and with each subsequent term $a_n>a_{n-1}$ chosen minimally such that the 3-free condition is not violated.  Odlyzko and Stanley conjectured that Stanley sequences divide into two classes based on asymptotic growth patterns, with one class of highly structured sequences satisfying $a_n\approx \Theta(n^{\log_2 3})$ and another class of seemingly chaotic sequences obeying $a_n=\Theta(n^2/\log n)$.  We propose a rigorous definition of \emph{regularity} in Stanley sequences based on local structure rather than asymptotic behavior and show that our definition implies the corresponding asymptotic property proposed by Odlyzko and Stanley.  We then construct many classes of regular Stanley sequences, which include as special cases all such sequences previously identified.   We show how two regular sequences may be combined into another regular sequence, and how parts of a Stanley sequence may be translated while preserving regularity.  Finally, we demonstrate that certain Stanley sequences possess proper subsets that are also Stanley sequences, a situation that appears previously to have been assumed impossible.
\end{abstract}

\section{Introduction}
A set of non-negative integers is \emph{3-free} if no three elements form an arithmetic progression.  Given a 3-free set $A$ with elements $a_0<a_1<\cdots<a_k$, we define the \emph{Stanley sequence} $S(A)=\{a_n\}$ according to the greedy algorithm, as follows: Assuming $a_n$ has been defined, let $a_{n+1}$ be the smallest integer greater than $a_n$ such that $\{a_0,\ldots,a_{n+1}\}$ is 3-free.  For convenience, we shall often write $S(a_0,a_2,\ldots,a_k)$ for $S(\{a_0,a_1,\ldots,a_k\})$.

The simplest Stanley sequence is $S(0)=0,1,3,4,9,10,12,13,27,\ldots$, the elements of which are exactly those integers with no 2's in their ternary representation.  Odlyzko and Stanley \cite{stanley} offered similar closed-form descriptions of the sequences $S(0,3^n)$ and $S(0,2\cdot 3^n)$, for $n$ any non-negative integer.  Their work also suggested an overarching dichotomy among Stanley sequences, in which the more ``structured" sequences (such as $S(0)$) followed one asymptotic growth pattern, while more ``chaotic" sequences followed another.

\begin{conj}[based on work by Odlyzko and Stanley \cite{stanley}] Let $S(A)=\{a_n\}$ be a Stanley sequence.  Then, for all $n$ large enough, one of the following two patterns of growth is satisfied:
\begin{enumerate}
\item $\frac{c}{2}\cdot n^{\log_2 3}\le a_n\le c\cdot n^{\log_2 3}$, or
\item $a_n\approx c'n^2/\log n$.
\end{enumerate}
\label{stanconj}
\end{conj}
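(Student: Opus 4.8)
The plan is to prove the two halves of the dichotomy separately and then show that every Stanley sequence falls on exactly one side by establishing a rigidity principle --- an open-and-closed statement for local structure. For a Stanley sequence $S(A)=\{a_n\}$ and a scale $k$, record the combinatorial data of the initial segment $B_k=\{\,a_n : a_n<3^k\,\}$ together with the set of integers in $[3^k,2\cdot 3^k)$ that $B_k$ already forbids; say $S(A)$ is \emph{$k$-structured} if this data is the one predicted by the regularity condition of the present paper, so that $B_k$ is a fixed union of affine copies of the canonical block $\{\,m<3^k : m\text{ has no digit }2\text{ in base }3\,\}$. The first step is a renormalization lemma: $k$-structured implies $(k+1)$-structured. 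Running the greedy rule on a union of such affine copies inside $[0,3^k)$ produces inside $[3^k,2\cdot 3^k)$ exactly the union of their translates by $3^k$ and nothing in $[2\cdot 3^k,3^{k+1})$, since any new element there --- or any deviation inside $[3^k,2\cdot 3^k)$ --- would complete a three-term progression straddling the boundary $3^k$, and affine copies of the canonical block contain no such progressions. Iterating, a sequence that is $k_0$-structured for a single $k_0$ is $k$-structured for all $k\ge k_0$, hence regular, and the theorem of this paper that regularity implies the bound $\frac{c}{2}n^{\log_2 3}\le a_n\le c\,n^{\log_2 3}$ finishes that case.

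The second step treats a sequence that is $k$-structured for no sufficiently large $k$, and claims it satisfies $a_n\asymp n^2/\log n$. The key reduction is that a single \emph{defect} --- a pair $(a_i,a_j)$ of chosen terms whose forbidden consequences are not accounted for by any affine block structure --- is self-propagating: the residues it blocks modulo $3^{k'}$ equidistribute for all larger $k'$, so after the last scale at which partial structure survives the process evolves like the greedy $3$-free subset of $\{0,1,\dots,N\}$ grown from a generic $3$-free seed. For that process one makes the Odlyzko--Stanley heuristic rigorous: writing $t(m)$ for the number of terms below $m$, each term $a_j$ already chosen in $[m/2,m)$ forbids $m$ precisely when its \emph{reflection} $2a_j-m$ has also been chosen, so $m$ is free with probability about $\exp(-c\,t(m)^2/m)$, whence $\frac{dt}{dm}\asymp\exp(-c\,t(m)^2/m)$ and $t(m)\asymp\sqrt{m\log m}$, i.e.\ $a_n\asymp n^2/\log n$ with the predicted constant $c'$. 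Converting the heuristic into a theorem needs genuine pseudorandomness of the greedy $3$-free set: quantitative equidistribution of the blocked residues in short intervals and near-independence of distinct blocking events, which I would attack by a second-moment estimate on the count of free integers in dyadic windows, bootstrapped scale by scale. This is where the argument --- and, I believe, every prior attempt on the conjecture --- genuinely stalls, so I expect it to be the main obstacle.

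Finally, to see the dichotomy is exhaustive and admits no intermediate regime, combine the two steps: a Stanley sequence is either $k$-structured for all large $k$, giving growth pattern (1), or fails to be $k$-structured at some $k_0$, and then step two forces growth pattern (2) from $k_0$ on; no stable partially structured state exists because step one shows structure propagates upward in scale while step two shows an unabsorbed defect propagates upward in the opposite sense. Two points need care in the gluing. First, uniformity: the implied constants in pattern (2) must depend only on the finite data of the seed up to $k_0$, which the paper's results on translating and combining regular pieces should provide, since a partially structured prefix is a finite union of regular pieces sitting above a chaotic tail. Second, one must exclude a slowly-growing-parameter scenario in which the repeat factor of an approximate self-similarity tends to infinity so slowly that $a_n$ interpolates between $n^{\log_2 3}$ and $n^2/\log n$; here I would argue that any such growth already injects enough fresh blocked residues per scale to trigger the equidistribution input of step two, collapsing the interpolation to the chaotic endpoint. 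Granting the pseudorandomness input, I expect the renormalization lemma and the gluing to be routine given the machinery already developed here.
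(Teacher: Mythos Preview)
The statement you are attempting to prove is Conjecture~\ref{stanconj}, and the paper offers no proof of it; indeed, the paper explicitly remarks that no single Stanley sequence has yet been shown to follow Type~2 growth, and it states the irregular-implies-Type-2 direction only as a further conjecture. So there is no proof in the paper to compare your proposal against, and your proposal must be judged on its own merits as an attack on an open problem.

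On those merits, there are two genuine gaps. First, your definition of ``$k$-structured'' is too narrow to capture the paper's notion of regularity: you require $B_k$ to be a union of affine copies of the canonical block $\{m<3^k:\text{no ternary digit of }m\text{ is }2\}$, but independent sequences such as $S(0,2,5)$ (character $\lambda=4$) or the sequences of Theorems~\ref{thm:monotone} and~\ref{thm:digits} are regular yet are \emph{not} built from translates of that block. Your renormalization lemma, even if correct, would therefore only recover a proper subclass of the Type~1 sequences, and the dichotomy would fail to be exhaustive for the reason that some regular sequences would be misclassified as having a ``defect.'' The paper's Proposition~2.1 is the correct analogue of your step one, and it works at the level of an arbitrary independent block $\Lambda$, not the specific digit set.

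Second, and more seriously, you yourself identify that the Type~2 half rests on converting the Odlyzko--Stanley heuristic $dt/dm\asymp\exp(-c\,t(m)^2/m)$ into a theorem via a pseudorandomness or second-moment input, and you concede this is ``where the argument\dots genuinely stalls.'' That concession is accurate: this is precisely the open core of the conjecture, and nothing in your outline supplies the needed equidistribution or near-independence of blocking events. The ``gluing'' paragraph is therefore premature---without step two there is nothing to glue---and the exclusion of intermediate growth regimes is likewise unsupported. What you have written is a reasonable strategic outline for the problem, but it is not a proof, and the paper makes no claim to have one either.
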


\begin{rem} The original paper \cite{stanley} considered the first type of growth in the case of $c=1$ only.  However, if $c$ is so restricted, the conjecture is certainly false, with $S(0,1,7)$ being one counterexample, requiring $c=10/9$.  (This assertion is simple to prove with machinery we will present in \S 2 and \S 3.)
\end{rem}

We will call these two patterns of growth \emph{Type 1} and \emph{Type 2}.   The closed-form descriptions given in [1] of $S(0,3^n)$ and $S(0,2\cdot 3^n)$ demonstrate that these sequences do indeed follow Type 1 growth, but they are by no means the only such sequences.  The justification given in \cite{stanley} for conjecturing Type 2 growth is a non-constructive probabilistic method that suggests, but does not prove, that a ``random" Stanley sequence should follow Type 2 growth.  However, no particular sequence has yet been shown to be Type 2, though Lindhurst \cite{lindhurst} has provided extensive data to support the notion that $S(0,4)$ follows this type of growth.

Erd\H{o}s et al.~\cite{erdos} posed several problems similar to Conjecture \ref{stanconj}, regarding the density of Stanley sequences.  Moy \cite{moy} recently solved one of these problems by showing that all Stanley sequences $\{a_n\}$ satisfy the asymptotic bound$$a_n\le x^2/(2+\epsilon).$$

In this paper, we approach the conjectured dichotomy among Stanley sequences from the perspective of local structure, rather than asymptotic behavior.  We begin, in Section \ref{sec:regular}, by identifying two important types of Stanley sequence.

\begin{Def} We say that a Stanley sequence $S(A)=\{a_n\}$ is \emph{independent} if there exists a constant $\lb$, called the \emph{character}, such that, for all sufficiently large $k$, the equations \begin{eqnarray*}a_{2^k+i}&=&a_{2^k}+a_i\\ a_{2^k}&=&2a_{2^k-1}-\lb+1\end{eqnarray*}hold whenever $0\le i< 2^k$.  We say that an integer $k_0$ is \emph{adequate} if (i) these equations are satisfied for all $k\ge k_0$ and (ii) $k_0$ is large enough that $a_{2^{k_0}}$ is not a necessary element of the set $A$.
\end{Def}

\begin{Def}
We say that a Stanley sequence $S(A)=\{a_n\}$ is \emph{regular} if there exist constants $\lb, \sigma$ and an independent Stanley sequence $\{a'_n\}$, having character $\lb$, such that, for large enough $k$ and $0\le i<2^k$,$$a_{2^k-\sigma+i}=a_{2^k-\sigma}+a'_i\hskip .5in a_{2^k-\sigma}=2a_{2^k-\sigma-1}-\lb+1.$$We refer to the sequence $\{a'_n\}$ as the \emph{core} of $S(A)$ and the constant $\sigma$ as the \emph{shift index}.
\end{Def}

Regularity is a strictly weaker condition than independence.  We show that all regular Stanley sequences follow Type 1 growth, and we conjecture the converse.

In Section \ref{sec:indep}, we consider methods for constructing independent Stanley sequences.  We begin by describing a class of independent Stanley sequences that includes as a special case the sequences $S(0,3^n)$ and $S(0,2\cdot 3^n)$ detailed in \cite{stanley}.

\begin{thm}[see Example \ref{ex:monotone}]
\label{thm:monotone}
Let $k$ be a positive integer and $\mathcal{A}$ be a monotone decreasing family of subsets of $\{0,1,\ldots,k-1\}$.  Let $$A=\{3^{a_1}+3^{a_2}+\cdots+3^{a_n}\mid \{a_1,a_2,\ldots,a_n\}\in \mathcal{A}\}.$$Then, $S(A\cup \{3^k\})$ and $S(A\cup \{2\cdot 3^k\})$ are independent.  (In Section \ref{sec:indep}, we give closed-form descriptions of these sequences.)
\end{thm}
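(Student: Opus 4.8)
The plan is to prove the theorem by exhibiting explicit closed-form descriptions of $S(A \cup \{3^k\})$ and $S(A \cup \{2\cdot 3^k\})$ and then verifying directly that these formulas satisfy the defining equations of independence. First I would set up notation: identify each subset $\{a_1,\ldots,a_n\} \subseteq \{0,\ldots,k-1\}$ with the integer $\sum 3^{a_i}$, and identify each such integer with the 0-1 string giving its ternary digits. The monotone-decreasing hypothesis on $\mathcal{A}$ means: if a 0-1 ternary string lies in (the image of) $\mathcal{A}$, then so does any string obtained by changing some 1's to 0's. The key combinatorial observation is that the greedy algorithm applied to $A$ will fill in, below $3^k$, exactly those integers whose ternary representation is a 0-1 string that is \emph{dominated by} some member of $\mathcal{A}$ — equivalently, since $\mathcal{A}$ is monotone decreasing, exactly those 0-1 strings lying in $\mathcal{A}$ itself, together with the "structured" completions forced between them. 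Actually the cleaner claim is that the elements of $S(A)$ below $3^k$ are precisely the nonnegative integers with only 0's and 1's in ternary whose support set lies in $\mathcal{A}$; I would prove this by induction on the elements, checking at each step that (a) no element with a 2 in ternary below position $k$ can appear, because it would complete a 3-term AP with two 0-1-string elements already present (using monotonicity to guarantee those two are in the sequence), and (b) every 0-1 string with support in $\mathcal{A}$ is not blocked, i.e. introducing it creates no AP — this uses that an AP $x, y, 2y-x$ among 0-1 ternary strings forces a carry/digit-2 somewhere, contradiction.

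Next I would handle the two "capstone" elements $3^k$ and $2\cdot 3^k$. Adding $3^k$ to the sequence: I claim the resulting Stanley sequence consists of the block $B$ of 0-1 strings with support in $\mathcal{A}$ (below $3^k$), then $3^k$ itself, then a translated copy $3^k + B'$ where $B'$ is the analogous block built from a related family $\mathcal{A}'$ at the next scale, and so on, doubling in size at each power-of-two index. The point is that once $3^k$ is placed, the 3-free condition relative to $3^k$ and the lower block forces the next $|B|$ elements to be exactly $3^k$ plus the elements of the $S(0)$-like "full" block on digits $\{0,\ldots,k-1\}$ — because $3^k$ having a single 1 in position $k$ interacts with lower 0-1 strings in a controlled way. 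I would make this precise and check the two independence equations: $a_{2^m + i} = a_{2^m} + a_i$ (the self-similar doubling) and $a_{2^m} = 2a_{2^m - 1} - \lambda + 1$, identifying the character $\lambda$ explicitly in terms of $k$ and $\mathcal{A}$ (it will be something like $\lambda = 1$ for the $3^k$ case and $\lambda$ related to $3^k$ for the $2 \cdot 3^k$ case — I'd pin down the constant by computing $a_{2^m - 1}$ as the top of a block and $a_{2^m}$ as the bottom of the next). The $2\cdot 3^k$ case is parallel but the capstone has a 2 in position $k$, which changes which completions are forced and shifts the character accordingly; the monotonicity of $\mathcal{A}$ is again what guarantees the blocking arguments go through.

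The main obstacle I anticipate is step (a)/(b) above: proving rigorously that the greedy algorithm's output below $3^k$ is \emph{exactly} the 0-1 strings with support in $\mathcal{A}$, neither more nor less. The "not more" direction (no forbidden element sneaks in) requires showing that whenever the greedy algorithm is about to place an integer $m < 3^k$ that is \emph{not} of the desired form — either because $m$ has a 2 in ternary or because its support is not in $\mathcal{A}$ — there is already a pair in the sequence forming an AP with $m$; this is a careful digit-by-digit argument exploiting monotonicity (for the support condition) and the ternary structure of APs (for the digit-2 condition). The "not less" direction (every desired element does get placed) requires showing no desired element is ever blocked, i.e. for $x$ a 0-1 string with support in $\mathcal{A}$, neither $\{y, z, x\}$ nor $\{y, x, z\}$ with $y < x$ and $y,z$ already placed can be an AP — this reduces to the standard fact that three distinct 0-1 ternary strings cannot be in arithmetic progression (as $x + z = 2y$ would need digit 2's or carries), so no new AP is created, possibly with a short case analysis when $x$ is at a block boundary. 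Once the block structure below $3^k$ is established, the inductive propagation to higher scales and the verification of the two independence equations should be comparatively mechanical, and I would present the final closed forms by describing, for each $n$, the ternary representation of $a_n$ in terms of the binary representation of $n$ and the structure of $\mathcal{A}$.
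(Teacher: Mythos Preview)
Your overall strategy---exhibit an explicit closed form for the sequence, then verify that it is 3-free and that every non-element above the nucleating set is covered---is exactly the paper's approach. However, the proposal has real gaps in execution.

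First, your ``main obstacle'' is a non-issue. The nucleating set is $A\cup\{3^k\}$, and by definition $A$ already \emph{is} the set of 0--1 ternary strings with support in $\mathcal{A}$; the greedy algorithm only begins after $3^k$, so there is nothing to prove about the elements below $3^k$. (If instead you meant the sequence $S(A)$ without the capstone, your claim is false: take $k=1$, $\mathcal{A}=\{\emptyset\}$, $A=\{0\}$; then $1\in S(0)$ but $\{0\}\notin\mathcal{A}$.)

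Second, and more seriously, you have not identified the correct closed form, and your vague description (``a translated copy $3^k+B'$ \ldots doubling in size at each power-of-two index'') misses the essential structure. The paper's closed form for $S(A\cup\{3^k\})$ is: all $x\ge 0$ with $t_i(x)\in\{0,1\}$ for $i\ne k$, such that if $t_k(x)=0$ the lower digits lie in $A$, and if $t_k(x)=2$ the lower digits lie in the \emph{complement} of $A$. This three-way split (in~$A$ / arbitrary / not in~$A$ according as $t_k=0,1,2$) is what makes the block at level $k{+}1$ have exactly $2^{k+1}$ elements regardless of $|\mathcal{A}|$, and it is the heart of the theorem. Your guess $\lambda=1$ is also wrong (and impossible, as the paper shows elsewhere); the character is $2\cdot 3^k$ generically.

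The genuine work, which your proposal does not address, is the covering argument: given $x>3^k$ with $x\notin S$, construct $y,z\in S$ with $x=2y-z$. The paper does this digit-by-digit with a case split on $t_k(x)$ and on whether the lower-digit word $y_0$ lies in $A$; monotonicity of $\mathcal{A}$ enters precisely here, to guarantee that when $y_0\in A$ the subordinate word $z_0$ (whose support is contained in that of $y_0$) also lies in $A$, and dually. That is the step you should be focusing on.
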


We next describe an operation that combines a regular with an independent sequence to yield a regular sequence.

\begin{thm}[see Example \ref{ex:product}]
\label{thm:product}
Let $S(A)=\{a_n\}$ be independent and $S(B)=\{b_n\}$ be regular.  Let $k$ be adequate with respect to $S(A)$.  Let $A^{\ast}=\{a_0,a_1,\ldots,a_{2^k-1}\}$ and define $$A\otimes_k B=\{a_{2^k}b+a\mid  a\in A^{\ast},b\in B\}.$$Then, $S(A\otimes_k B)$ is a regular Stanley sequence, independent if and only if $B$ is, having description $$S(A\otimes_k B)=\{a_{2^k}b+a\mid  a\in A^{\ast},b\in S(B)\},$$with character $\lb(A\otimes_k B)=a_{2^k}\cdot \lb(B)+\lb(A)$ and shift index $\sigma(A\otimes_k B)=2^k\cdot \sigma(B)$.
\end{thm}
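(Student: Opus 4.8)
The plan is to prove the closed form $S(A\otimes_k B)=C$, where $C:=\{a_{2^k}b+a : a\in A^\ast,\ b\in S(B)\}$, and then to deduce everything else. Write $M:=a_{2^k}$. Since $0=a_0\le a<M$ for every $a\in A^\ast$, each element of $C$ has a unique ``base-$M$'' representation $Mb+a$; ordered increasingly, $C$ is the concatenation of the blocks $Mb+A^\ast$ taken in increasing order of $b\in S(B)$, so $c_{2^km+i}=Mb_m+a_i$ for $0\le i<2^k$, and the blocks with $b\in B$ form exactly $A\otimes_k B$, an initial segment of $C$ (because $B$ is an initial segment of $S(B)$). By the usual characterization of Stanley sequences it then suffices to prove (i) $C$ is $3$-free, and (ii) every integer $v>\max(A\otimes_k B)$ with $v\notin C$ equals $2p-q$ for some $p,q\in C$ with $q<p<v$: (i) guarantees the greedy algorithm never has to discard an element of $C$, and (ii) forces it to discard every non-element lying above the generating set.

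For (i), iterating the independence relations gives $a_{2^k+i}=M+a_i$ for $0\le i<2^k$, so $M+a\in S(A)$ whenever $a\in A^\ast$. Take a progression $x<y<z$ in $C$ with $x+z=2y$; writing $x=Mb_x+a_x$, and likewise for $y,z$, and reducing modulo $M$, the two carries lie in $\{0,1\}$, so $b_x+b_z-2b_y\in\{-1,0,1\}$ and accordingly $a_x+a_z-2a_y\in\{0,\pm M\}$. If $a_x+a_z=2a_y$ then also $b_x+b_z=2b_y$; $3$-freeness of $A^\ast$ (an initial segment of $S(A)$) and of $S(B)$ forces $a_x=a_y=a_z$ and $b_x=b_y=b_z$, so $x=y=z$, contradicting $x<z$. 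If $a_x+a_z-2a_y=\pm M$, then one of $\{M+a_x,a_y,a_z\}$, $\{a_x,M+a_y,M+a_z\}$ is a set of three distinct elements of $S(A)$ in arithmetic progression, contradicting $3$-freeness of $S(A)$.

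Step (ii) is the technical heart. Write $v=M\beta+\alpha$, $0\le\alpha<M$; one checks that in the active region $\beta\notin S(B)$ forces $\beta>\max B$, and likewise $\beta>\max B$ in Case (c) below, so in either situation $\beta$ (or $\beta-1$) that is not in $S(B)$ is \emph{forbidden} in building $S(B)$, i.e.\ of the form $2\beta_1-\beta_2$ with $\beta_2<\beta_1$ in $S(B)$. I would distinguish: \textbf{(a)} $\alpha\in A^\ast$, so necessarily $\beta\notin S(B)$; writing $\beta=2\beta_1-\beta_2$, the set $\{M\beta_2+\alpha,M\beta_1+\alpha,v\}$ works. \textbf{(b)} $\alpha\notin A^\ast$ but $\alpha=2a_i-a_j$ with $a_j<a_i<\alpha$ in $A^\ast$ (this covers every $\alpha$ skipped in building $S(A)$ past the generators of $A$): use $\{M\beta+a_j,M\beta+a_i,v\}$ if $\beta\in S(B)$, and $\{M\beta_2+a_j,M\beta_1+a_i,v\}$ if $\beta\notin S(B)$, noting $2(M\beta_1+a_i)-(M\beta_2+a_j)=M\beta+\alpha=v$. \textbf{(c)} $\alpha$ lies strictly between two consecutive generators of $A$ but is not as in (b). The key input here is a structural lemma about independent sequences: $M+\alpha$, which lies strictly between two consecutive elements of $S(A)$ of index $\ge|A|$ and is therefore skipped past the generators, can be written $M+\alpha=2a_i-a_j$ with \emph{both} $a_i,a_j\in A^\ast$, $a_j<a_i$ — one shows that a representation with $a_i\notin A^\ast$ would force $a_i=M+a_r$, $a_j=M+a_s$ with $a_r,a_s$ generators and $\alpha=2a_r-a_s$, putting $\alpha$ back into Case (b). Granting this: if $\beta-1\in S(B)$ use $\{M(\beta-1)+a_j,M(\beta-1)+a_i,v\}$; otherwise $\beta-1>\max B$ is forbidden in $S(B)$, $\beta-1=2\beta_1-\beta_2$, and $\{M\beta_2+a_j,M\beta_1+a_i,v\}$ works because $2(M\beta_1+a_i)-(M\beta_2+a_j)=M(\beta-1)+(M+\alpha)=v$. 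In every case the two smaller members lie in $C$ and below $v$, as required.

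Once $S(A\otimes_k B)=C$ is known, the regularity data come from index bookkeeping. For large $j$ the index $2^{k+j}-2^k\sigma(B)=2^k\bigl(2^j-\sigma(B)\bigr)$ is a multiple of $2^k$, so $c_{2^{k+j}-2^k\sigma(B)}=Mb_{2^j-\sigma(B)}$ and $c_{2^{k+j}-2^k\sigma(B)-1}=Mb_{2^j-\sigma(B)-1}+a_{2^k-1}$; substituting the regularity relation $b_{2^j-\sigma(B)}=2b_{2^j-\sigma(B)-1}-\lambda(B)+1$ and using $M=2a_{2^k-1}-\lambda(A)+1$ gives $c_{2^{k+j}-2^k\sigma(B)}=2c_{2^{k+j}-2^k\sigma(B)-1}-\bigl(M\lambda(B)+\lambda(A)\bigr)+1$, so the character is $a_{2^k}\lambda(B)+\lambda(A)$. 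Writing $i=2^ki_2+i_1$ with $0\le i_1<2^k$ and applying $b_{2^j-\sigma(B)+i_2}=b_{2^j-\sigma(B)}+b'_{i_2}$ (where $\{b'_n\}$ is the core of $S(B)$) gives $c_{2^{k+j}-2^k\sigma(B)+i}=c_{2^{k+j}-2^k\sigma(B)}+\bigl(Mb'_{i_2}+a_{i_1}\bigr)$; hence $S(A\otimes_k B)$ is regular with shift index $2^k\sigma(B)$ and with core the sequence whose $(2^ki_2+i_1)$-th term is $Mb'_{i_2}+a_{i_1}$, which by the closed form already proved — applied now to the independent core $\{b'_n\}$, with generating set $B'$ — equals $S(A\otimes_k B')$ and is independent. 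Since an element of $C$ is independent exactly when its shift index vanishes and its core equals itself, and here that happens iff $\sigma(B)=0$ and $S(B')=S(B)$, i.e.\ iff $S(B)$ is independent, the final assertion follows. I expect Case (c) of Step (ii) — in particular the lemma on representations of $M+\alpha$ by elements of $A^\ast$ — to be the main obstacle.
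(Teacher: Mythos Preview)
Your proposal is correct and follows essentially the same approach as the paper: establish the closed form $S(A\otimes_k B)=C$ by showing $C$ is $3$-free and that every non-element above the generating set is covered, splitting on the residue $\alpha$ modulo $M=a_{2^k}$. Your cases (a)/(b)/(c) correspond exactly to the paper's Case~1 with $r\in A^\ast$, Case~1 with $r$ covered by $A^\ast$, and Case~2 with $r\in O(A)$; the lemma you flag as the obstacle in case~(c) --- that $M+\alpha$ is covered by $A^\ast$ whenever $\alpha\in O(A)$ --- is precisely the fact the paper invokes there (asserted in one line, relying on the analysis carried out in Proposition~2.2), and your concluding regularity bookkeeping is simply a more explicit version of the paper's ``follows routinely.''
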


Using this operation, we describe another class of independent Stanley sequences.

\begin{thm}[see Example \ref{ex:digits}]
\label{thm:digits}
Let $k$ be a positive integer. Let $T_1,T_2$ be disjoint subsets of $\{0,1,\ldots,k\}$ such that no $t\in T_1$ satisfies $t-1\in T_2$.  Let $$A=\left\{\left(3^{a_1}+3^{a_2}+\cdots+3^{a_m}\right)+2\left(3^{b_1}+3^{b_2}+\cdots+3^{b_n}\right)\mid \{a_1,\ldots,a_m\}\subseteq T_1,\{b_1,\ldots,b_n\}\subseteq T_2\right\}.$$Then, $S(A)$ is an independent Stanley sequence.
\end{thm}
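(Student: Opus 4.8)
My plan is to prove this by an induction that decomposes $A$ along the positions of $T_2$ and reassembles it using Theorems \ref{thm:monotone} and \ref{thm:product}. Order the exponents $0,1,\dots,k$ and cut $\{0,\dots,k\}$ into consecutive blocks by inserting a break immediately above each position of $T_2$ (and above the topmost occupied position). The hypothesis that no $t\in T_1$ has $t-1\in T_2$ is exactly the statement that no position of $T_1$ lies directly above a position of $T_2$; consequently every block, read from low exponent to high, consists of positions of $T_1$ and positions outside $T_1\cup T_2$, capped by at most one position of $T_2$ at its top. That is precisely the shape of the generating sets in Theorem \ref{thm:monotone}: for block $i$ take $\mathcal{A}$ to be the family of all subsets of the $T_1$-positions strictly below the block's top, and append $3^{m}$ or $2\cdot 3^{m}$ according as the cap of the block lies in $T_1$ or in $T_2$. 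Each block therefore contributes an independent Stanley sequence $S(A_i)$.

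The heart of the argument is to show $A = A_1\otimes_{k_1}\!\bigl(A_2\otimes_{k_2}(\cdots\otimes_{k_{r-1}}A_r)\bigr)$ for appropriate adequate indices $k_i$, and then to finish by induction on the number of blocks $r$. For $r=1$ there is nothing to do: $S(A)=S(A_1)$ is independent by Theorem \ref{thm:monotone}. For $r>1$, granting the decomposition, $S\bigl(A_2\otimes_{k_2}(\cdots)\bigr)$ is regular, in fact independent since its innermost factor $A_r$ is, so Theorem \ref{thm:product} (with $S(A_1)$ independent and $k_1$ adequate) yields that $S(A)$ is regular and independent, and its formula also delivers the character as an explicit combination of the $\lambda(A_i)$. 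The decomposition itself I would read off from the closed-form descriptions of the Theorem \ref{thm:monotone} sequences supplied in Section \ref{sec:indep}: these should identify the first $2^{k_i}$ terms $A_i^{\ast}$ of $S(A_i)$ with exactly the integers whose ternary digits realise block $i$ under the prescribed constraints, identify the value $(a_i)_{2^{k_i}}$ with the power of $3$ that shifts block $i+1$ into position, and certify adequacy of $k_i$ because that power of $3$ exceeds every element of the generating set of $A_i$ and so is not a necessary element of it. Once $A_i^{\ast}$ and the shift factors are pinned down, the product identity holds because $A$ is literally the blockwise direct sum of the block-restricted digit patterns, and each such pattern equals $A_i^{\ast}$.

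The step I expect to be the real obstacle is verifying that the blocks genuinely reassemble as this iterated $\otimes_k$ — that is, nailing down $A_i^{\ast}$ and the shift factor $(a_i)_{2^{k_i}}$ exactly. The delicacy lies in how a position of $T_2$ interacts with the positions above it: a block capped by a $2$-digit can have its Stanley sequence spread out over positions above it before entering its self-similar régime, so the cuts must be chosen so that the shift factor matches the block's effective length and so that the next block's generating set is not swallowed by this spreading. The forbidden local pattern — a $T_1$-position immediately above a $T_2$-position — is precisely what would wreck this matching, and one can check on small examples (e.g.\ that $S(0,6,9,15)$ never settles into the self-similar form) that such configurations genuinely fall outside the independent class, so the hypothesis is used essentially. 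In configurations where this coupling is awkward, I would fall back on exhibiting the closed form of $S(A)$ directly — guided by the descriptions of $S(0)$, $S(0,3^n)$, $S(0,2\cdot 3^n)$ — and verifying by hand that the claimed set is $3$-free, is produced by the greedy rule from the seed $A$, and satisfies the independence equations; that verification too turns on the $T_1/T_2$ hypothesis, now to preclude cross-block arithmetic progressions. The remainder is bookkeeping.
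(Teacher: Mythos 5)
Your high-level strategy — realize $A$ as an iterated $\otimes_k$-product and invoke Theorem~\ref{thm:product} inductively — is exactly the paper's, so the skeleton is sound. But your choice of atomic factors is different from the paper's, and it does not quite work. The paper does not factor through Theorem~\ref{thm:monotone}; instead it introduces two auxiliary lemmas (Lemmas~\ref{lem1} and~\ref{lem2}), giving factors of the form $B_1=\{3^{b_1}+\cdots+3^{b_m}\mid j\le b_1<\cdots<b_m\le k\}$ and $B_2=\{2(3^{b_1}+\cdots+3^{b_m})\mid j\le b_1<\cdots<b_m\le k\}$. That is, each factor is a \emph{stretch of $T_1$-positions} or a \emph{stretch of $T_2$-positions}, not a $T_1$-block capped by a single $T_2$-digit. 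The $B_2$ lemma is indispensable precisely because $T_2$-stretches of length $\ge 2$ occur and are permitted by the hypothesis (nothing forbids $t,t+1\in T_2$).

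The gap in your version is concrete. Take $T_1=\{0\}$, $T_2=\{2,3\}$, so $A=\{0,1,18,19,54,55,72,73\}$; the hypothesis is satisfied. Your cut ``immediately above each $T_2$-position'' gives blocks $\{0,1,2\}$ and $\{3\}$, hence $A_1=\{0,1,18\}$ (a Theorem~\ref{thm:monotone} seed with $\mathcal{A}$ the subsets of $\{0\}$ inside $\{0,1\}$) and $A_2=\{0,2\}$. But because $\mathcal{A}$ is \emph{not} the full power set of $\{0,1\}$ (position~$1$ is a gap), the sequence $S(0,1,18)$ spreads into position~$3$: its minimum adequate shift factor is $a_{2^4}=81=3^4$, not $3^3$. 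Consequently the product $A_1\otimes_{4}A_2$ places $A_2$ at ternary positions $\ge 4$ and simply cannot produce the elements $54,55,72,73$ of $A$ that live at position~$3$; the iterated product generates a different sequence. The forbidden pattern $t\in T_1,\ t-1\in T_2$ is what ensures that a $T_2$-cap's overflow never lands on a $T_1$-position, but it does \emph{not} prevent the overflow from landing on another $T_2$-position, which is exactly what happens here. Your claim that the hypothesis ``is precisely what would wreck this matching'' therefore overstates what it gives you: consecutive $T_2$-positions preceded by an internal gap break your block structure even when the hypothesis holds. The fall-back you sketch — exhibit the closed form of $S(A)$ directly and verify 3-freeness, the greedy property, and the independence equations — is not a patch so much as a restart: carrying it out amounts to proving the paper's Lemmas~\ref{lem1} and~\ref{lem2}, which is where the real work of this theorem is. The paper's $B_1/B_2$ factorization, with the gap absorbed into the parameter $j$ of the \emph{next} factor rather than into the current block, is what sidesteps the overflow problem cleanly.
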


In Section \ref{sec:dep}, we turn to dependent Stanley sequences.  Given an independent sequence $S(A)$, one may obtain a dependent sequence by translating a portion of $S(A)$ and recomputing subsequent elements.  The resulting sequence has $S(A)$ as its core.

Given the regular sequence $S(A)=\{a_n\}$ and the nonnegative integers $k$ and $c$, let $S_k(c,A)$ be the Stanley sequence generated by the set $$A_k(c,A)=\left\{a_i\mid 0\le i<2^k-\sigma(A)\right\} \cup \left\{c+a_i\mid 2^k-\sigma(A)\le i<2^{k+1}-\sigma(A)\right\},$$assuming that this set is 3-free (if it is not, then $S_k(c,A)$ is not defined).

\begin{thm}[see Example \ref{ex:shift}]
\label{thm:shift}
Let $S(A)=\{a_n\}$ be an independent sequence with character $\lb$.  Let $\ell$ be the minimum adequate integer for $S(A)$, and pick $k\ge \ell$. Let $c$ be such that$$\lb\le c\le a_{2^k-2^\ell}-\lb.$$Then, $S_k(c,A)$ is defined and is a regular Stanley sequence, with core $S(A)$.
\end{thm}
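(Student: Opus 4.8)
The plan is to write down the explicit description of $S_k(c,A)$ promised in Example~\ref{ex:shift}, verify it against the greedy algorithm, and then read regularity off directly. Since $S(A)=\{a_n\}$ is independent, $\sigma(A)=0$, so $A_k(c,A)=\{a_i:0\le i<2^k\}\cup\{c+a_i:2^k\le i<2^{k+1}\}$; by the independence relations the second block equals $(c+a_{2^k})+\{a_i:0\le i<2^k\}$, so $A_k(c,A)$ is a union of two translates of $\{a_0,\dots,a_{2^k-1}\}$, at offsets $0$ and $c+a_{2^k}$. The claimed form of $S_k(c,A)=\{b_n\}$ is
\[ b_n=\begin{cases} a_n, & 0\le n<2^k,\\ a_n+2^mc, & 2^{k+m}\le n<2^{k+m+1},\ m\ge 0;\end{cases} \]
that is, the $j$th dyadic block of $S(A)$ above index $2^k$ is shifted up by $2^jc$. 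Write $B_{-1}=\{b_n:n<2^k\}$ and $B_j=\{b_n:2^{k+j}\le n<2^{k+j+1}\}$ for $j\ge 0$; the terms with $n<2^{k+1}$ are exactly the elements of $A_k(c,A)$.

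First I would prove that the infinite set $T=\{b_n:n\ge 0\}$ is $3$-free. This simultaneously shows $A_k(c,A)$ is $3$-free (so $S_k(c,A)$ is defined) and, since then every initial segment of $T$ is $3$-free, that each claimed $b_n$ is a legal greedy choice. Put $\delta(n)=b_n-a_n$, which is nondecreasing in $n$ and runs through $0,c,2c,4c,\dots$ on successive blocks. Suppose $x<y<z$ lie in $T$ with $x+z=2y$. If $x,y,z$ lie in the same block, the relation says three terms of $S(A)$ (in the same self-similar position) form a progression, impossible. If $z$ lies strictly above the blocks of $x$ and $y$, then $z=2y-x\le 2\max(B_{m-1})-a_0$ while $z\ge\min(B_m)$, and since $a_0=0$ for any independent sequence, a direct size comparison (using $\lambda\le 0$, which holds for the sequences at hand) gives a contradiction. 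If $y,z$ lie in one block $B_m$ and $x$ in a strictly lower one, then $x=2a_j-a_i+2^mc$ with $j<i$ indices in $B_m$; here $2a_j-a_i\ge a_{2^{k+m}-1}-\lambda+1$ whereas $x\le a_{2^{k+m}-1}+2^{m-1}c$, and $c\ge\lambda$ forces $2^{m-1}c\le\lambda-1$, which is false. Hence $T$ is $3$-free.

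The substantive part is minimality: every integer $v$ with $b_{n-1}<v<b_n$ must complete a progression with $\{b_0,\dots,b_{n-1}\}$, and here the bound $c\le a_{2^k-2^\ell}-\lambda$ enters. If $n$ is interior to its block, say $n=2^{k+m}+n'$ with $0<n'<2^{k+m}$, then $v-\delta(n)\in(a_{n'-1},a_{n'})$, so in $S(A)$ it is blocked by some $a_p,a_q,v-\delta(n)$; self-similarity lets one replace $p,q$ by indices in $B_m$, producing a progression in $\{b_0,\dots,b_{n-1}\}$ that blocks $v$. The hard case is a block boundary $n=2^{k+m}$, where the gap $(b_{2^{k+m}-1},b_{2^{k+m}})=(a_{2^{k+m}-1}+2^{m-1}c,\,a_{2^{k+m}}+2^mc)$ exceeds the corresponding gap of $S(A)$ by $2^{m-1}c$. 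The plan is to cover it piece by piece: an $S(A)$-blocking progression both of whose endpoints lie (in $S_k(c,A)$) in $B_j$ translates up by $2^jc$, while one with endpoints in $B_{-1}$ and $B_{m-1}$ translates up by $2^mc$, and intermediate ``mixed'' progressions fill in between. Working this out already for $m=1$: the progressions internal to $B_0$ cover $(a_{2^{k+1}-1}+c,\,2a_{2^k}+c)$, and the $B_{-1}$–$B_0$ mixed progressions, which take the form $v=2a_{2^k}+2c+s$ with $s$ ranging over $\{2a_{q}-a_{p}:p,q<2^k\}$, cover $2a_{2^k}+2c+\bigl(-a_{2^k-2^\ell}+\lambda,\ a_{2^k}-\lambda\bigr)$; these two intervals overlap, and their union is exactly the whole gap, precisely when $c\le a_{2^k-2^\ell}-\lambda=\sum_{j=\ell}^{k-1}a_{2^j}-\lambda$. (The appearance of $2^\ell$ is the base case of the self-similar recursion: $\{2a_q-a_p:p,q<2^k\}$ reaches down only to $-a_{2^k-2^\ell}+\lambda$.) If $c$ exceeds this bound some value below $b_{2^{k+m}}$ is unblocked, the greedy term falls short of $a_{2^{k+m}}+2^mc$, and the recursion breaks — as one already sees for $S(0)$ with $k=2$ and any $c>a_3$. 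I expect this covering argument, and confirming that $a_{2^k-2^\ell}-\lambda$ is exactly the threshold for all $m$, to be the main obstacle; the case $m\ge 2$ is the same in spirit, drawing progressions from $B_{m-1},B_{m-2},\dots$, with the binding constraint still coming from the bottom of the reflection set.

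Finally, granting the closed form, regularity follows at once. For $K\ge k+1$ put $m=K-k\ge 1$; then for $0\le i<2^K$ independence of $S(A)$ gives $b_{2^K+i}=a_{2^K+i}+2^mc=(a_{2^K}+a_i)+2^mc=b_{2^K}+a_i$, and $b_{2^K}=a_{2^K}+2^mc=(2a_{2^K-1}-\lambda+1)+2^mc=2(a_{2^K-1}+2^{m-1}c)-\lambda+1=2b_{2^K-1}-\lambda+1$. Thus the defining relations of regularity hold with shift index $\sigma=0$, character $\lambda$, and core the independent sequence $S(A)$ itself, and $S_k(c,A)$ is independent only when $c=0$, since otherwise $b_{2^K+2^k}=b_{2^K}+a_{2^k}\ne b_{2^K}+b_{2^k}$.
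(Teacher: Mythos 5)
Your overall framing — posit the closed form $b_n = a_n + 2^{\max(0,\lfloor\log_2(n/2^k)\rfloor)}c$ (for $n \ge 2^k$) and verify it against the greedy algorithm — is the same as the paper's, and the final regularity computation at the end is correct. But two of the three substantive steps have genuine gaps.

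First, the $3$-freeness argument contains a false claim. In the case where $z$ alone lies in a higher block $B_m$ than $x$ and $y$, you invoke ``$\lambda \le 0$, which holds for the sequences at hand.'' The paper proves the opposite: $\lambda(A) \ge 0$ always (and $\lambda \ne 1,3$), and many independent sequences have $\lambda \ge 2$ (e.g.\ $S(0,2)$ has $\lambda=2$, $S(0,2,5)$ has $\lambda=4$). Computing the margin: $\min(B_m) - 2\max(B_{m-1}) = 1 - \lambda$, so your size comparison only rules out this case when $\lambda = 0$. For $\lambda \ge 2$ you need a finer argument. The paper sidesteps this entirely: it never proves $3$-freeness of the full infinite set $T$ directly. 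It only checks that the nucleating set $A^\ast = \Lambda_k \cup (\Gamma_k + c)$ is $3$-free (one line, using $a_{2^k} + c \ge 2a_{2^k-1} + 1$), and then the $3$-freeness of the whole sequence is automatic once one has shown inductively that $S_k(c,A)$ agrees with the closed form — because a Stanley sequence is $3$-free by construction.

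Second, the covering argument — which you correctly identify as the heart of the matter — is left as an assertion. You state that $\{2a_q - a_p : p,q < 2^k\}$ ``reaches down only to $-a_{2^k-2^\ell}+\lambda$'' and that this set covers an interval, but you give no proof. This is precisely the content of the paper's Lemmas~\ref{shiftlem} and~\ref{shiftlemtwo}, which are not routine: Lemma~\ref{shiftlem} requires tracking the omitted set $O(A)$ through a Cover-shift argument and invoking the bound $\omega(A) < \lambda$ from Lemma~\ref{omlb}, and Lemma~\ref{shiftlemtwo} requires a telescoping union over the adequate range $\ell \le m \le k-1$ (this is where the $2^\ell$ in the bound actually comes from). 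You also explicitly defer the induction past the block boundary $n = 2^{k+1}$, writing ``I expect this covering argument \ldots\ to be the main obstacle; the case $m\ge 2$ is the same in spirit.'' In the paper the induction is genuinely delicate: it must carry as an inductive hypothesis not only $\Gamma^\ast_j = \Gamma_j + 2^{j-k}c$ but also that $\Lambda^\ast_{j-1}$ and $\Gamma^\ast_{j-1}$ jointly cover a specific interval plus an $O(A)$-translate, and the block $\Gamma^\ast_j$ must be split into halves $\Lambda^1_j, \Lambda^2_j, \Gamma^1_j, \Gamma^2_j$ to apply Lemma~\ref{shiftlemtwo} to the right translated copies of $\Lambda_{j-1}$. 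Finally, you also need to show that $b_{2^{k+1}}$ itself (and the rest of $B_1$) is not covered by $A^\ast$; the paper handles this via the identity $Q = O(A) + (a_{2^{k+1}} + 2c)$ and a second application of the Cover-shift Lemma, and your sketch omits this direction entirely. In short: right target, right high-level plan, but the technical core (the two covering lemmas and the full induction) is missing, and one of your stated lemmas about $\lambda$ is false.
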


We conclude by noting that, for some regular sequences, an element may be removed without changing the Stanley sequence property of the other elements.  (See Example \ref{ex:deletion}.)  This disproves the tacit assumption in Erd\H{o}s et al.~\cite{erdos} and Moy \cite{moy} that Stanley sequences are \emph{maximal} 3-free sets (3-free sets that cannot be strictly enlarged).

\section{Regular Stanley sequences}
\label{sec:regular}

We begin by introducing some useful terminology and notation.  If $S(A)$ is a Stanley sequence, we say that $A$ is a \emph{nucleating set} of $S(A)$.  Note that a given Stanley sequence has infinitely many nucleating sets, corresponding to all sufficiently large prefixes of the sequence.  We define the \emph{minimal} nucleating set of a Stanley sequence as the one which is of minimal cardinality.

We use the shorthand $A+n$ to denote the set $\{a+n\mid a\in A\}$, for any set $A$ and integer $n$.  It is easy to see that if $S(A)$ is a Stanley sequence and $n$ is a non-negative integer, then $$S(A+n)=S(A)+n.$$In other words, translating the nucleating set translates the entire sequence.  It is sufficient, therefore, when investigating Stanley sequences to consider only those which begin at 0.  We say that such Stanley sequences are in \emph{root position} and for the remainder of this paper will assume that all Stanley sequences under consideration are in root position.

We say an integer $x$ is \emph{covered} by a set $S$ of integers if there exist $s,t\in S$ such that $s<t$ and $2t-s=x$.  Then, the Stanley sequence $S(a_0,a_1,\ldots,a_k)$ is the unique increasing sequence $S=\{a_n\}$ where each integer $x>a_k$ is covered by $S$ if and only if it is not in $S$.   Given a Stanley sequence $S(A)$, we define the \emph{omitted set} $O(A)$ to be the set of nonnegative integers that are neither in $S(A)$ nor are covered by $S(A)$.  We let $\om(A)$ denote the largest element of $O(A)$. It is immediate that $\om(A)$ is less than the largest element of $A$.

We say that an integer $x$ is \emph{jointly covered} by sets $S$ and $T$ if there exist $s\in S$, $t\in T$ such that $s<t$ and $2t-s=x$.  Thus, an integer jointly covered by $S$ and $S$ is covered by $S$.  We say that a set $X$ is covered (or jointly covered) by a set $S$ (or pair of sets $S$ and $T$) if every element of $X$ is so covered.

The following lemma is trivial to prove but will be extremely useful hereafter.

\begin{lem}[Cover-shift Lemma] If $x$ is jointly covered by $S$ and $T$, and if $n_1\le n_2$ are integers, then $x+(2n_2-n_1)$ is jointly covered by $S+n_1$ and $T+n_2$.
\label{covershift}
\end{lem}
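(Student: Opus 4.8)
The statement to prove is the Cover-shift Lemma: if $x$ is jointly covered by $S$ and $T$, then $x + (2n_2 - n_1)$ is jointly covered by $S + n_1$ and $T + n_2$ whenever $n_1 \le n_2$.

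The plan is a direct unwinding of the definition. Since $x$ is jointly covered by $S$ and $T$, by definition there exist $s \in S$ and $t \in T$ with $s < t$ and $2t - s = x$. The natural candidates for the witnesses in the shifted sets are $s + n_1 \in S + n_1$ and $t + n_2 \in T + n_2$. I would then compute $2(t + n_2) - (s + n_1) = (2t - s) + (2n_2 - n_1) = x + (2n_2 - n_1)$, which is exactly the element we want covered. So the arithmetic identity is immediate.

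The one genuine point requiring the hypothesis $n_1 \le n_2$ is verifying the strict-inequality condition in the definition of joint cover: we need $s + n_1 < t + n_2$. This follows because $s < t$ gives $s + n_1 < t + n_1 \le t + n_2$, using $n_1 \le n_2$ in the last step. (Note that without $n_1 \le n_2$ the conclusion could fail, since "jointly covered" is not symmetric in the two sets — the smaller witness must come from the first set.) So the main (and only) obstacle, such as it is, is remembering to check this inequality rather than just the algebraic identity; there is no real difficulty.

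In short, the proof is: take the witnesses $s, t$ from the hypothesis, shift them to $s + n_1$ and $t + n_2$, check $s + n_1 < t + n_2$ using $n_1 \le n_2$, and compute $2(t+n_2) - (s+n_1) = x + (2n_2 - n_1)$. This establishes that $x + (2n_2 - n_1)$ is jointly covered by $S + n_1$ and $T + n_2$, completing the proof. The specialization to "covered" (the case $S = T$, $n_1 = n_2 = n$) is then the remark that translation by $n$ maps covered integers to covered integers, shifting the covered value by $n$.
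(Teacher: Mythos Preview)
Your proof is correct and is exactly the direct verification the paper has in mind; indeed, the paper declares the lemma ``trivial to prove'' and omits the argument entirely. There is nothing to add.
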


We are now ready to introduce the classes of Stanley sequence that are the subject of this paper.

\subsection{Independent sequences}

\begin{Def} We say that a Stanley sequence $S(A)=\{a_n\}$ is \emph{independent} if there exists a constant $\lb$ such that, for all sufficiently large $k$, the equations \begin{eqnarray}a_{2^k+i}&=&a_{2^k}+a_i\label{an}\\ a_{2^k}&=&2a_{2^k-1}-\lb+1\label{another}\end{eqnarray}hold whenever $0\le i< 2^k$.
\end{Def}

It is evident from this definition that the constant $\lb$ is unique.  We may therefore refer to it as the \emph{character} $\lb(A)$ of the independent sequence $S(A)$.

For each $k$, we will refer to the set $\{a_i\mid 2^k\le i<2^{k+1}\}$ as the \emph{$k$th block $\Gm_k$}.

\begin{ex}
The Stanley sequence $S(0,2,5)$ is independent with character $\lb=4$.

$$S(0,2,5)=0,2,\underbrace{\fbox{$5,6,$}}_{\Gm_1}\underbrace{\fbox{$9,11,14,15,$}}_{\Gm_2}\underbrace{\fbox{$27,29,32,33, 36, 38, 41, 42,$}}_{\Gm_3}\ldots$$

Note that, in moving from the last element of one block to the first element of the next block, the value of the sequence doubles and subtracts $(\lb-1)$.  The next block is then a shift of the preceding terms.  Thus, for instance,$$2\cdot a_3-\lb+1=2\cdot 6 -4+1=9=a_4$$and $\{9,11,14,15\}=\{0,2,5,6\}+9$.  Likewise, $2\cdot 15-4+1=27$, and $\{27,29,32,33, 36, 38, 41, 42\}=\{0,2,5,6,9,11,14,15\}+27$.
\end{ex}

The following proposition shows that the criterion ``for all sufficiently large $k$" in the above definition can be replaced by ``for a single sufficiently large $k$."

\begin{prop} Let $S(A)=\{a_n\}$ be a Stanley sequence with $\om=\om(A)$, and suppose integers $\lb$ and $k$ are such that $a_{2^k-1}\ge \lb+\om$ and that equations (\ref{an}) and (\ref{another}) hold whenever $0\le i<2^k$.  Then, $S(A)$ is independent with character $\lb$.
\end{prop}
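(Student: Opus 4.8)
The proposition asserts that a single instance of the independence equations at level $k$ (together with a size condition on $a_{2^k-1}$) propagates to all larger levels. The natural strategy is induction on $k$: assuming equations (\ref{an}) and (\ref{another}) hold at level $k$, I will show they hold at level $k+1$, and that the hypothesis $a_{2^{k+1}-1}\ge\lb+\om$ is maintained (which is automatic, since $a_{2^{k+1}-1} > a_{2^k-1}$). The engine of the induction is the characterization of Stanley sequences via covering: an integer $x > a_k$ lies in $S(A)$ iff it is not covered by $S(A)$. So to verify $a_{2^{k+1}+i} = a_{2^{k+1}} + a_i$ and the doubling relation, I must control exactly which integers beyond the first $2^{k+1}$ terms are covered.

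**Key steps.** First, set $B_k := \{a_0,\dots,a_{2^{k+1}-1}\}$, the first $2^{k+1}$ terms, which by the level-$k$ hypothesis equals $\{a_i\}_{i<2^k} \cup (\{a_i\}_{i<2^k} + a_{2^k})$. I would analyze which integers in the range $(a_{2^{k+1}-1}, \text{large}]$ are covered by $B_k$. A covered integer $2t - s$ with $s < t$ in $B_k$ splits into cases by which "half" of $B_k$ contains $s$ and $t$. The within-first-half and within-second-half cases reproduce, via the Cover-shift Lemma (Lemma \ref{covershift}) and the relation $a_{2^k} = 2a_{2^k-1}-\lb+1$, exactly the covering pattern of the first $2^k$ terms, shifted appropriately; the cross case ($s$ in the first half, $t$ in the second) contributes the genuinely new covered integers. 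The crucial computation is to show that the smallest integer $> a_{2^{k+1}-1}$ not covered by $B_k$ equals $2a_{2^{k+1}-1} - \lb + 1$, which will be $a_{2^{k+1}}$, and that from there the covering structure is a translate by $a_{2^{k+1}}$ of the covering structure on the first $2^{k+1}$ terms — giving $a_{2^{k+1}+i} = a_{2^{k+1}} + a_i$ for $0\le i<2^{k+1}$.

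**Where the size hypothesis enters.** The condition $a_{2^k-1}\ge \lb+\om$ is exactly what guarantees that the omitted set $O(A)$ is "used up" before level $k$ — i.e., no element of $O(A)$ interferes with the covering analysis in the range we care about, so that the greedy/covering description behaves purely multiplicatively. Concretely, I expect it ensures the gap between consecutive blocks is clean: that $a_{2^{k}} = 2a_{2^k - 1} - \lb + 1$ really is the next term, with nothing omitted in between, because any potential omission would have had to occur at a value $\le \om < a_{2^k-1} - \lb$, hence already inside $\{a_i\}_{i<2^k}$. This needs to be tracked carefully through the induction: I would show $\om(A)$ is genuinely a fixed finite quantity determined by the nucleating set, and that once $a_{2^k-1} \ge \lb + \om$, the same holds for all larger $k$, so the hypothesis is inherited.

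**Main obstacle.** The delicate point is the cross-term covering analysis: verifying that the new integers covered by pairing a first-half element with a second-half element of $B_k$, combined with the "doubled" within-half covers, exactly tile the complement of $\{a_{2^{k+1}} + a_i : 0\le i<2^{k+1}\}$ in the relevant range — no gaps (which would force an unexpected extra term) and no overlaps causing an element of the predicted sequence to be covered (which would force that term to be skipped). This is essentially a self-similarity bookkeeping argument, and getting the boundary cases right — especially near $a_{2^{k+1}-1}$ and near $a_{2^{k+1}} + a_{2^{k+1}-1}$, where the $-\lb+1$ correction and the omitted set both could in principle intrude — is where the size hypothesis $a_{2^k - 1} \ge \lb + \om$ must be invoked with care. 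Once the base step ($k$ as given) is established by direct inspection using the covering characterization, the inductive step is a matter of repeating this tiling argument with all quantities shifted by $a_{2^{k+1}}$ in place of $a_{2^k}$.
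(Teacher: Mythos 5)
Your plan follows the same route as the paper's proof: both decompose the covered set of the first $2^{k+1}$ terms into within-first-half, within-second-half, and cross contributions, apply the Cover-shift Lemma to translate known covering patterns, and use the hypothesis $a_{2^k-1}\ge\lb+\om$ to confine the influence of $O(A)$ so that the post-$a_{2^{k+1}-1}$ covered set is exactly an interval up to $3a_{2^k}$ together with $O(A)+3a_{2^k}$. The paper simply carries out the bookkeeping your ``Main obstacle'' paragraph anticipates (with the cross case split further by $y>x+a_{2^k}$, $y=x+a_{2^k}$, $y<x+a_{2^k}$, and the size hypothesis invoked concretely to show $O(A)+a_{2^k}$ is covered by $\Lambda$ alone), so no new idea separates the two.
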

\begin{proof} It suffices to show that (\ref{an}) and (\ref{another}) must hold for all $k'>k$, and hence to show that they hold if $k$ is replaced with $k+1$.  Let $\Lb=\{a_i\mid 0\le i<2^k\}$ and $\Gm=\{a_i\mid 2^k\le i<2^{k+1}\}$, so that $\Gm=\Lb+a_{2^k}$ (see Figure \ref{fig}).  Let $B$ be the set of integers in the interval $[0,a_{2^k-1}]$ that are covered by $\Lb$.

Our strategy will be to describe the integers covered by $\Lb\cup \Gm$ by breaking up this set into (i) the integers covered by $\Lb$ alone, (ii) the integers covered by $\Gm$ alone, (iii) the integers jointly covered by $\Lb$ and $\Gm$.  We additionally break up the set in (iii) into the sets $\{2y-x\mid x\in \Lb, y\in \Gm, y>x+a_{2^k}\}$, $\{2y-x\mid x\in \Lb, y\in \Gm, y=x+a_{2^k}\}$, and $\{2y-x\mid x\in \Lb, y\in \Gm, y<x+a_{2^k}\}$.

We begin by observing that $\Lb$ covers the following integers:
\begin{itemize}
\item $B$, by definition.
\item The open interval $(a_{2^k-1},a_{2^k})$, since these integers are not in $S(A)$ and hence must be covered by $S(A)$.
\item The set $O(A)+a_{2^k}$.  To see this, observe that each element $s\in O(A)+a_{2^k}$ must be covered by $S(A)$, and yet cannot be covered by $\Lb+a_{2^k}=B$ by the definition of $O(A)$.  Hence, there must be $x\in \Lb$ and $w\in S(A)$ such that $2w-x=s$.  Since $x\le a_{2^k-1}$ and $s\le \omega+a_{2^k}$, we conclude that $$2w\le a_{2^k-1}+\omega+a_{2^k}\le 2a_{2^k-1}-\lb+1+a_{2^k}$$ (where the second inequality follows from $a_{2^k-1}\ge \lb+\om$).  The right side of this equals $2a_{2^k}$, implying that $w<a_{2^k}$ and hence that $w\in \Lb$.  We conclude that $s$ and hence $O(A)+a_{2^k}$ is covered by $\Lb$.
\end{itemize}

\begin{figure}
\begin{eqnarray*}
S(0,2,5)&=&0,2,5,6,9,11,14,15,27,29,32,33, 36, 38, 41, 42, 81,\ldots\\
\lb&=&4\\
O(A)&=&\{1,3,4\}\\
\Lb&=& \{a_0,a_1,a_2,a_3\}=\{0,2,5,6\}\\
\Gm&=&\Lb+a_4= \{9,11,14,15\}\\
\end{eqnarray*}
\caption{The independent Stanley sequence $\{0,2,5\}$, with character $\lb=4$.}
\label{fig}
\end{figure}

It is easy to see that the union $$B\cup (a_{2^k-1},a_{2^k})\cup \left(O(A)+a_{2^k}\right)$$ of these three sets in fact constitutes \emph{exactly} the integers covered by $\Lb$. Hence, by the Cover-shift Lemma (Lemma \ref{covershift})
\begin{itemize}
\item The set $\Gm=\Lb+a_{2^k}$ must cover exactly the union $$\left(B+a_{2^k}\right)\cup (a_{2^{k+1}-1},2a_{2^k})\cup \left(O(A)+2a_{2^k}\right).$$
\item The set $\{2y-x\mid x\in \Lb, y\in \Gm, y>x+a_{2^k}\}$ equals the union $$\left(B+2a_{2^k}\right)\cup (a_{2^k-1}+2a_{2^k},3a_{2^k})\cup \left(O(A)+3a_{2^k}\right).$$
\end{itemize}

We now note that $$\{2y-x\mid x\in \Lb, y\in \Gm, y=x+a_{2^k}\}=\Lb+2a_{2^k}.$$Letting$$C=\{2y-x\mid x\in \Lb, y\in \Gm, y<x+a_{2^k}\},$$we see that all elements of $C$ are less than $$2a_{2^{k+1}-1}-a_{2^k-1}=2a_{2^k}+a_{2^k-1}<3a_{2^k}.$$Hence, $C\subseteq [0,3a_{2^k})$.

Summing up our results, we find that the integers covered by $\Lb\cup \Gm$ are exactly the union \begin{eqnarray*}&&B\cup (a_{2^k-1},a_{2^k})\cup \left(O(A)+a_{2^k}\right)\cup \left(B+a_{2^k}\right)\cup (a_{2^{k+1}-1},2a_{2^k})\cup \left(O(A)+2a_{2^k}\right)\\ &&\cup \left(B+2a_{2^k}\right)\cup (a_{2^k-1}+2a_{2^k},3a_{2^k})\cup \left(O(A)+3a_{2^k}\right)\cup \left(\Lb+2a_{2^k}\right)\cup C.\end{eqnarray*}Restricting to integers greater than $a_{2^{k+1}-1}$, we obtain the set \begin{eqnarray*}&&(a_{2^{k+1}-1},2a_{2^k})\cup \left(O(A)+2a_{2^k}\right)\cup \left(B+2a_{2^k}\right)\cup (a_{2^k-1}+2a_{2^k},3a_{2^k})\\ &&\cup \left(O(A)+3a_{2^k}\right)\cup \left(\Lb+2a_{2^k}\right)\cup(C\cap (a_{2^{k+1}-1},\infty)).\end{eqnarray*}Since the union $\left(O(A)+2a_{2^k}\right)\cup \left(B+2a_{2^k}\right)\cup \left(\Lb+2a_{2^k}\right)$ comprises the entire interval $[2a_{2^k},a_{2^k-1}+2a_{2^k}]$, the preceding expression simplifies to$$\left(a_{2^{k+1}-1},3a_{2^k}\right)\cup \left(O(A)+3a_{2^k}\right)\cup (C\cap (a_{2^{k+1}-1},\infty)).$$Because $C$ is a subset of $[0,3a_{2^k})$, the last term is already included in the first, giving $$\left(a_{2^{k+1}-1},3a_{2^k}\right)\cup \left(O(A)+3a_{2^k}\right).$$

This shows that $$a_{2^{k+1}}=3a_{2^k}=2a_{2^k}+2a_{2^k-1}-\lb+1=2a_{2^{k+1}-1}-\lb+1$$ and more generally that the terms of $S(A)$ that follow $a_{2^{k+1}-1}$ are exactly the elements of the set $A+3a_{2^k}$, followed by as many terms of $S(A)+3a_{2^k}$ as occur before $2a_{2^{k+1}}-a_0=6a_{2^k}$.  Since the elements of $S(A)+3a_{2^k}$ that occur before $6a_{2^k}$ are exactly the elements $$a_0+3a_{2^k},a_1+3a_{2^k},\ldots,a_{2^{k+1}-1}+3a_{2^k},$$we conclude that equations (\ref{an}) and (\ref{another}) hold with $k+1$ substituted for $k$.  Hence, these equations must hold for all $k'>k$ and $S(A)$ is independent.
\end{proof}

\begin{prop} Let $S(A)=\{a_n\}$ be an independent sequence.  Then, there exists a constant $\alpha$ such that, for $k$ large enough, $$a_{2^k}=\alpha\cdot 3^k.$$
\label{alpha}
\end{prop}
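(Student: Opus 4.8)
The plan is to extract from the definition of independence a simple multiplicative recurrence for the subsequence $\{a_{2^k}\}$, namely $a_{2^{k+1}} = 3a_{2^k}$ for all large $k$, and then read off the closed form. This recurrence already appeared implicitly inside the proof of the previous proposition, but here it can be obtained directly from equations (\ref{an}) and (\ref{another}) without any covering analysis.

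First I would fix $k_0$ large enough that equations (\ref{an}) and (\ref{another}) hold for every $k \ge k_0$ (such a $k_0$ exists by the definition of independence). For any such $k$, applying (\ref{an}) with $i = 2^k - 1$ gives $a_{2^{k+1}-1} = a_{2^k} + a_{2^k-1}$. Substituting this into (\ref{another}) applied at level $k+1$ yields
$$a_{2^{k+1}} = 2a_{2^{k+1}-1} - \lb + 1 = 2a_{2^k} + \left(2a_{2^k-1} - \lb + 1\right).$$
But (\ref{another}) applied at level $k$ says the parenthesized quantity equals $a_{2^k}$, so $a_{2^{k+1}} = 3a_{2^k}$. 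Iterating this from $k_0$ gives $a_{2^k} = 3^{k-k_0} a_{2^{k_0}}$ for all $k \ge k_0$, and hence the claim holds with $\alpha = a_{2^{k_0}} \cdot 3^{-k_0}$.

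There is essentially no obstacle to this argument; the only point requiring care is the bookkeeping of the threshold $k_0$ past which the defining equations of an independent sequence are valid, which is immediate from the definition. (One may also remark that $\alpha$ is forced to be a rational number whose denominator is a power of $3$, since $a_{2^{k_0}}$ is an integer.)
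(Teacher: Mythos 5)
Your proof is correct and follows essentially the same route as the paper: both derive the recurrence $a_{2^{k+1}}=3a_{2^k}$ directly from equations (\ref{an}) and (\ref{another}) and iterate. Your rearrangement (recognizing the parenthesized quantity $2a_{2^k-1}-\lb+1$ as $a_{2^k}$ by equation (\ref{another}) at level $k$) is in fact a bit cleaner than the paper's substitution, which rewrites $a_{2^k-1}$ as $\tfrac{1}{2}(a_{2^k}+\lb)$ and contains a small typo (it should be $\tfrac{1}{2}(a_{2^k}+\lb-1)$ to recover $3a_{2^k}$ rather than $3a_{2^k}+1$).
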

\begin{proof} For sufficiently large $k$,\begin{eqnarray*}a_{2^{k+1}}&=&2\left(a_{2^k}+a_{2^k-1}\right)-\lb+1\\ &=&2\left(a_{2^k}+\frac{1}{2}\left(a_{2^k}+\lb\right)\right)-\lb+1\\ &=&3a_{2^k},\end{eqnarray*}which completes the proof.
\end{proof}

\subsection{Regular sequences}

Having now defined the independent Stanley sequences, we can define the more general class of well-structured Stanley sequences to which they belong.

\begin{Def} We say that a Stanley sequence $S(A)=\{a_n\}$ is \emph{regular} if there exist constants $\lb, \sigma$ and an independent Stanley sequence $\{a'_n\}$ such that
\begin{itemize}
\item The character of $\{a'_n\}$ equals $\lb$.
\item For large enough $k$, the equations\begin{eqnarray}a_{2^k-\sigma+i}&=&a_{2^k-\sigma}+a'_i\label{one}\\ a_{2^k-\sigma}&=&2a_{2^k-\sigma-1}-\lb+1\label{two}\end{eqnarray}hold whenever $0\le i< 2^k$.
\end{itemize}

A Stanley sequence that is not regular will be called \emph{irregular}.
\end{Def}

\begin{ex}
The sequence $\{a_n\}=S(0,1,4)$ is regular with $\lb=0$ and $\{a'_n\}=S(0)$.  As with most regular sequences we will consider, $\sigma=0$.

As with independent sequences, we can break up the sequence into blocks $\Gm_k$ as follows, where the length of each block is a power of 2:

\begin{eqnarray*}\{a_n\}&=&0, 1, \underbrace{\fbox{$4, 5,$}}_{\Gm_1}\underbrace{\fbox{$ 11, 12, 14, 15,$}}_{\Gm_2}\underbrace{\fbox{$31, 32, 34, 35, 40, 41, 43, 44,$}}_{\Gm_3}\ldots\\
\{a'_n\}&=&0,1,\underbrace{\fbox{$3,4,$}}_{\Gm_1}\underbrace{\fbox{$9,10,12,13,$}}_{\Gm_2}\underbrace{\fbox{$27,28,30,31,36,37,39,40,$}}_{\Gm_3}\ldots\end{eqnarray*}

Note that, in moving from the last element of one block to the first element of the next block, the value of the sequence doubles and subtracts $(\lb-1)$.  The next block is then a shift of the corresponding preceding terms in the sequence $\{a'_n\}$.  Note that $$2\cdot a_3-\lb+1=2\cdot 5-0+1=11=a_4$$and $\{11,12,14,15\}=\{0,1,3,4\}+11$.  Likewise, $2\cdot 15+1=31$ and $\{31, 32, 34, 35, 40, 41, 43, 44\}=\{0,1,3,4,9,10,12,13\}+31$.
\end{ex}

\begin{prop} If $S(A)=\{a_n\}$ is regular, then there is a unique choice of constants $\lb,\s$ and independent Stanley sequence $\{a'_n\}$ such that the above definition of regularity is satisfied.
\end{prop}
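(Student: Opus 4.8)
The plan is to reduce uniqueness of the whole triple $(\lb,\s,\{a'_n\})$ to uniqueness of the shift index $\s$, and then to recover $\s$ intrinsically from the positions of the ``large jumps'' of $\{a_n\}$. For the reduction, suppose $(\lb_1,\s_1,\{a'_n\})$ and $(\lb_2,\s_2,\{a''_n\})$ both witness regularity of $S(A)=\{a_n\}$, and suppose it is already known that $\s_1=\s_2=:\s$. Then for all large $k$ and all $0\le i<2^k$, equation (\ref{one}) gives $a_{2^k-\s}+a'_i=a_{2^k-\s+i}=a_{2^k-\s}+a''_i$, and since $a_{2^k-\s}$ is a term of $\{a_n\}$ and hence independent of the witness, we get $a'_i=a''_i$; letting $k\to\infty$ gives $\{a'_n\}=\{a''_n\}$. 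Then $\lb_1$ and $\lb_2$ are both the character of this common core, and the character of an independent sequence is unique, so $\lb_1=\lb_2$. Thus it suffices to prove that $\s$ is uniquely determined by $\{a_n\}$.

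To recover $\s$, fix a witness $(\lb,\s,\{a'_n\})$ and choose $k$ large enough that (\ref{one}), (\ref{two}) hold and that Proposition \ref{alpha} applies to $\{a'_n\}$: say $a'_{2^j}=\alpha3^j$, hence $a'_{2^j-1}=(\alpha3^j+\lb-1)/2$, for all $j\ge j_0$. Put $b_k=a_{2^k-\s}$, so that the $k$th block $[\,2^k-\s,\ 2^{k+1}-\s-1\,]$ of $\{a_n\}$ consists of the values $b_k+a'_i$ for $0\le i<2^k$. The claim is that, for all sufficiently large $n$,
\[
a_n>\tfrac32\,a_{n-1}\quad\Longleftrightarrow\quad n=2^k-\s\text{ for some large }k .
\]
The implication ``$\Leftarrow$'' follows at once from (\ref{two}), since $a_{2^k-\s}/a_{2^k-\s-1}=2-(\lb-1)/a_{2^k-\s-1}\to2$. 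For ``$\Rightarrow$'', a large index $n$ lies in some block $k$ with $k$ large; if $n$ is not the left endpoint of its block, then $n=2^k-\s+i$ with $1\le i<2^k$ and $a_n/a_{n-1}=1+(a'_i-a'_{i-1})/(b_k+a'_{i-1})\le 1+G_k/b_k$, where $G_k$ is the largest gap among $a'_0,\dots,a'_{2^k-1}$. Expanding the core block by block shows that, for large $k$, $G_k$ is attained at an index that is a power of two, namely $G_k=a'_{2^{k-1}}-a'_{2^{k-1}-1}=(\alpha3^{k-1}+1-\lb)/2\sim\tfrac12\alpha3^{k-1}$; and the recursion $b_k=2b_{k-1}+a'_{2^{k-1}}$ (which follows from (\ref{one}), (\ref{two}) together with $a'_{2^{k-1}}=2a'_{2^{k-1}-1}-\lb+1$) gives $b_k\ge\tfrac53\alpha3^{k-1}$ for large $k$. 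Hence $G_k/b_k\to\tfrac3{10}<\tfrac12$, so $a_n<\tfrac32a_{n-1}$, which proves the claim. Consequently the set $\{\,2^k-\s:k\text{ large}\,\}$ is determined by $\{a_n\}$ up to its finitely many smallest elements; so if $\s_1$ and $\s_2$ both arise from witnesses, then $\{2^k-\s_1:k\text{ large}\}$ and $\{2^k-\s_2:k\text{ large}\}$ share a common tail, and comparing successive differences along that tail (these are the successive powers of two, for each of the two sets) forces the two enumerations to coincide, hence $\s_1=\s_2$.

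The work is all in the second step: one must verify that the block-boundary jumps of $\{a_n\}$ are distinguishable, in the relative sense $a_n/a_{n-1}$, from every interior jump of every block, which comes down to controlling the maximal gap $G_k$ of the core. The essential input is Proposition \ref{alpha} applied to the core, which locates the dominant gaps of an independent sequence at indices that are powers of two. The particular numbers (the threshold $\tfrac32$, the limiting interior-ratio bound $\tfrac3{10}$, and so on) are irrelevant — any fixed separation between the block-boundary ratio, whose limit is $2$, and a uniform bound strictly below $2$ on the interior ratios will do — but the estimates must be made uniform over all large $n$, not merely asymptotic.
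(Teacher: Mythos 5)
Your proof is correct, and its core mechanism is the same as the paper's: the block boundaries $n=2^k-\s$ are the only indices where $a_n$ roughly doubles, so $\s$ is recoverable from $\{a_n\}$; and once $\s$ is pinned down, the core and character are forced. The execution differs, though. The paper argues by contradiction: supposing $\s_1<\s_2$ both witness regularity, it notes that the $\s_1$-boundary jump falls strictly inside a $\s_2$-block and parlays this into a growth-rate contradiction against Proposition \ref{alpha} (the proof is brisk, and as printed the first inequality in its displayed chain looks reversed given $\s_1<\s_2$, so it reads a bit loosely). You instead give a direct, positive characterization — $a_n>\tfrac32 a_{n-1}$ for large $n$ iff $n$ is a block boundary — and then deduce $\s$ from the tail of the boundary set by comparing successive differences (powers of two). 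Your route needs the extra quantitative work of bounding the largest interior gap $G_k$ against the block-start $b_k$: the observation that $G_k$ is eventually the gap at $2^{k-1}$, so $G_k\sim\tfrac12\alpha 3^{k-1}$, and the unrolled recursion $b_k=4b_{k-2}+2a'_{2^{k-2}}+a'_{2^{k-1}}\ge 5\alpha 3^{k-2}$ giving $b_k\ge\tfrac53\alpha 3^{k-1}$. This is more work than the paper does, but it yields an honest, uniform separation between boundary ratios (tending to $2$) and interior ratios (bounded below $\tfrac32$), which the paper's asymptotic shorthand glosses over. The reduction from uniqueness of the whole triple to uniqueness of $\s$ is identical in both. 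In short: same idea, but your version makes the quantitative step explicit, which is a genuine improvement in rigor.
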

\begin{proof} Observe that uniqueness of $\s$ implies uniqueness of $\lb$ and $\{a'_n\}$.  Suppose for the sake of contradiction that $\s$ can take on distinct values $\s_1<\s_2$ (for corresponding distinct pairs $(\lb,\{a'_n\})$).  Then, for large enough $k$, $$a_{2^{k+1}-\s_2-1}>a_{2^{k+1}-\s_1}\approx 2a_{2^{k+1}-\s_1-1}>2a_{2^k-\s_2},$$implying that, for each choice of $\{a'_n\}$, $$a'_{2^k}=a_{2^{k+1}-\s_2-1}-a_{2^k-\s_2}>a_{2^k-\s_2}.$$Since $a_{2^k}=\alpha\cdot 3^k$, we know that $\log a_{2^k-\s_2}$ must be asymptotically no more than 3.  However, $$a_{2^{k+1}-\s_2}\approx 2a_{2^{k+1}-\s_2-1}>3a_{2^k-\s_2},$$implying that $\log a_{2^k-\s_2}$ is strictly greater than 3, a contradiction.  Hence, $\s$ is unique, implying the proposition.
\end{proof}

For $S(A)$ regular, we write $\lb$, $\s$, and $\{a'_n\}$ as $\lb(A)$, $\s(A)$, and $S'(A)$, respectively and refer to them as the \emph{character}, \emph{shift index}, and \emph{core} of the Stanley sequence $S(A)$.  We say that an integer $k_0$ is \emph{adequate} if (i) all $k\ge k_0$ satisfy equations (\ref{one}) and (\ref{two}), and (ii) $k_0$ is large enough that $a_{2^{k_0}-\sigma(A)}$ is not contained in the minimal nucleating set of $S(A)$.

It is evident that the independent Stanley sequences $S(A)$ are exactly the regular Stanley sequences that satisfy $\s(A)=0$ and $S'(A)=S(A)$.  We say that a sequence is \emph{dependent} if it is regular but not independent.

\begin{prop} Let $S(A)=\{a_n\}$ be a regular sequence, and let $\alpha$ be the constant implied in Proposition \ref{alpha} such that $a'_{2^k}=\alpha\cdot 3^k$ for large $k$.  Then, there exists a constant $\beta$ such that, for $k$ large enough, $$a_{2^k-\sigma(A)}=\alpha \cdot 3^k+\beta \cdot 2^k.$$
\label{beta}
\end{prop}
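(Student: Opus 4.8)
The plan is to distill from the regularity equations a single first‑order linear recurrence in $k$ for the quantity $a_{2^k-\sigma(A)}$, and then solve it in closed form. Throughout write $\sigma=\sigma(A)$, $\lb=\lb(A)$, and $\{a'_n\}=S'(A)$, and set $x_k=a_{2^k-\sigma}$.

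First I would combine equations (\ref{one}) and (\ref{two}). Substituting $i=2^k-1$ into (\ref{one}) gives $a_{2^{k+1}-\sigma-1}=a_{2^k-\sigma}+a'_{2^k-1}$, and applying (\ref{two}) with $k+1$ in place of $k$ gives $a_{2^{k+1}-\sigma}=2a_{2^{k+1}-\sigma-1}-\lb+1$. Combining these yields $x_{k+1}=2x_k+\bigl(2a'_{2^k-1}-\lb+1\bigr)$. Now the key point is that the core $\{a'_n\}$ is \emph{independent} with character $\lb$, so equation (\ref{another}) applied to $\{a'_n\}$ reads $a'_{2^k}=2a'_{2^k-1}-\lb+1$; hence $2a'_{2^k-1}-\lb+1=a'_{2^k}$ and the recurrence collapses to $x_{k+1}=2x_k+a'_{2^k}$. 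By Proposition \ref{alpha} applied to the core, $a'_{2^k}=\alpha\cdot 3^k$ for all $k$ beyond some threshold $k_1$ (chosen large enough that the regularity equations also hold), so $x_{k+1}=2x_k+\alpha\cdot 3^k$ for all $k\ge k_1$.

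Finally I would solve this recurrence. Put $y_k=x_k-\alpha\cdot 3^k$; then for $k\ge k_1$ one computes $y_{k+1}=2x_k+\alpha\cdot 3^k-3\alpha\cdot 3^k=2\bigl(x_k-\alpha\cdot 3^k\bigr)=2y_k$, so $y_k=2^{k-k_1}y_{k_1}$ for all such $k$. Setting $\beta=2^{-k_1}y_{k_1}$ gives $x_k=\alpha\cdot 3^k+\beta\cdot 2^k$ for all sufficiently large $k$, which is exactly the claimed formula.

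There is no serious obstacle here; the computation is entirely routine once the recurrence is in hand. The only things requiring attention are invoking independence of the \emph{core} (rather than of $S(A)$ itself) to rewrite $2a'_{2^k-1}-\lb+1$ as $a'_{2^k}$, and being careful about the range of $k$ for which both the regularity equations and Proposition \ref{alpha} apply, so that the recurrence, and hence the closed form, is valid for all large $k$.
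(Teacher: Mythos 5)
Your proof is correct and follows essentially the same approach as the paper: both derive the first‑order recurrence $a_{2^{k+1}-\sigma}-2a_{2^k-\sigma}=a'_{2^k}=\alpha\cdot 3^k$ by combining the two regularity equations with the independence of the core and Proposition~\ref{alpha}. The only difference is cosmetic — the paper stops at the recurrence and says ``which proves the proposition,'' while you explicitly solve it via the substitution $y_k=x_k-\alpha\cdot 3^k$.
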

\begin{proof} Let $\lb=\lb(A)$, $\s=\s(A)$, and $\{a'_n\}=S'(A)$.  Pick some adequate $k$.  Observe that \begin{eqnarray*}a_{2^{k+1}-\s}-2a_{2^k-\s}&=&(2a_{2^{k+1}-\s-1}-\lb+1)-2a_{2^k-\s}\\ &=&2(a_{2^{k+1}-\s-1}-a_{2^k-\s})-\lb+1\\ &=&2a'_{2^k-\s-1}-\lb+1\\ &=&a'_{2^k-\s}\\ &=&\alpha\cdot 3^k,\end{eqnarray*}which proves the proposition.
\end{proof}

This proposition allows us to define the functions $\alpha(A)$ and $\beta(A)$ for each $A$ such that $S(A)$ is regular.  Note that while $\alpha(A)$ and $\beta(A)$ must evidently be rational, they need not be integers, as in the case of $A=\{0,1,7\}$, where $\alpha(A)=10/9$.  It is clear that $\alpha(A)$ must be positive; a similar condition on $\beta(A)$ appears true from data.

\begin{conj} $\beta(A)\ge 0$ for all regular Stanley sequences $S(A)$.
\end{conj}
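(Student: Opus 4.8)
The plan is to turn this asymptotic statement into a single finite inequality and then attack that inequality by comparing the covering structure of $S(A)$ with that of its core. Write $S(A)=\{a_n\}$, let $S'(A)=\{a'_n\}$ be its core with character $\lb$, put $\s=\s(A)$, and let $\alpha,\beta$ be as in Propositions~\ref{alpha} and~\ref{beta}, so that $a'_{2^k}=\alpha\cdot 3^k$ and $a_{2^k-\s}=\alpha\cdot 3^k+\beta\cdot 2^k$ for all large $k$. The computation in the proof of Proposition~\ref{beta} gives $a_{2^{k+1}-\s}=2a_{2^k-\s}+\alpha\cdot 3^k$, while $a'_{2^{k+1}}=3a'_{2^k}=2a'_{2^k}+\alpha\cdot 3^k$ by Proposition~\ref{alpha}. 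Hence $d_k:=a_{2^k-\s}-a'_{2^k}$ satisfies $d_{k+1}=2d_k$, and since $d_k=\beta\cdot 2^k$ we obtain, for the minimal adequate index $k_0$,
$$\beta(A)\ge 0\iff d_{k_0}\ge 0\iff a_{2^{k_0}-\s}\ge\alpha\cdot 3^{k_0}.$$
So the conjecture is equivalent to the single statement that the $(2^{k_0}-\s)$th term of $S(A)$ is at least the $2^{k_0}$th term of its core.

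Fix an adequate $k$, put $Q_k=\{a_0,\dots,a_{2^k-\s-1}\}$ and $\Lb'_k=\{a'_0,\dots,a'_{2^k-1}\}$, and let $\mathrm{Cov}(X)$ denote the set of integers covered by a finite set $X$. Every integer of $[0,a_{2^k-\s})$ is either a term of $S(A)$, or lies in $\mathrm{Cov}(Q_k)$ (a cover $2t-s<a_{2^k-\s}$ has $t<a_{2^k-\s}$, so $s,t\in Q_k$), or lies in $O(A)$; and for $k\ge k_0$ all of $O(A)$ lies below $a_{2^k-\s}$, since $\om(A)$ is smaller than the largest element of the minimal nucleating set and that element is $<a_{2^{k_0}-\s}$. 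Counting gives $a_{2^k-\s}=(2^k-\s)+|\mathrm{Cov}(Q_k)\cap[0,a_{2^k-\s})|+|O(A)|$, and the same argument for the independent core gives $a'_{2^k}=2^k+|\mathrm{Cov}(\Lb'_k)\cap[0,a'_{2^k})|+|O(A')|$. Subtracting,
$$\beta(A)\cdot 2^k=\bigl|\mathrm{Cov}(Q_k)\cap[0,a_{2^k-\s})\bigr|-\bigl|\mathrm{Cov}(\Lb'_k)\cap[0,a'_{2^k})\bigr|+\bigl(|O(A)|-|O(A')|-\s\bigr).$$
The last term is a fixed constant, so $\beta(A)\ge 0$ is equivalent to: $Q_k$ covers at least as many integers below $a_{2^k-\s}$ as $\Lb'_k$ covers below $a'_{2^k}$, up to an additive constant independent of $k$.

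For $k>k_0$, equations (\ref{one})--(\ref{two}) give $\Lb'_k=\Lb'_{k_0}\cup\bigcup_{k_0\le j<k}(a'_{2^j}+\Lb'_j)$ and $Q_k=Q_{k_0}\cup\bigcup_{k_0\le j<k}(a'_{2^j}+\Lb'_j+d_j)$, so $Q_k$ and $\Lb'_k$ differ only in a bounded prefix and in the block translations $d_j=\beta(A)\cdot 2^j$. If $\beta(A)\ge 0$ the $d_j$ are nondecreasing in $j$, and the Cover-shift Lemma (Lemma~\ref{covershift}) applied to each pair of blocks $j\le j'$ transports every cover witnessed inside $\Lb'_k$ to a cover inside $Q_k$; together with the description of $\mathrm{Cov}(\Lb'_k)$ above $a'_{2^k-1}$ extracted in the proof of the first Proposition of Section~\ref{sec:regular} (there it is $(a'_{2^k-1},a'_{2^k})\cup(O(A')+a'_{2^k})$, with $a'_{2^k}$ itself uncovered since $0\notin O(A')$), this yields the covering inequality and hence $\beta(A)\ge 0$ in that case. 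Everything therefore comes down to excluding $\beta(A)<0$, and here the naive approach stalls: the $d_j$ now point the wrong way, Lemma~\ref{covershift} transports covers only from $Q_k$ up into $\Lb'_k$, and the raw cardinality count is self-consistent (a compressed, denser prefix $Q_k$ can plausibly cover fewer integers). I would look for an asymmetric input instead — most promisingly an extremal statement that the independent core is, among all regular sequences with a given core structure, pointwise minimal at block boundaries (which forces $a_{2^k-\s}\ge a'_{2^k}$ at once), or a direct analysis of the 3-free constraints in force when block $k$ of $S(A)$ is laid down (it must be a suitable translate of $\Lb'_k$), showing that no leftward-shifted translate can satisfy them.

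The main obstacle is exactly this last step. The recursions of the first paragraph are self-referential — every reformulation is equivalent to $d_k\ge 0$ — so nothing purely algebraic will fix the sign, and the Cover-shift Lemma is one-sided, useful precisely in the case one wishes to prove; some genuinely new, asymmetric idea is needed. As a preliminary I would record the (easy) fact that the character of every independent, hence every regular, sequence is nonnegative: if $\lb<0$ then $a_{2^k}-1$ lies in the open gap $(a_{2^k-1},a_{2^k})$ and so must be covered, yet any representation $2t-s=a_{2^k}-1=2a_{2^k-1}-\lb$ with $t\le a_{2^k-1}$ forces $s=\lb+2(t-a_{2^k-1})<0$, which is impossible. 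The conjecture already presupposes $\lb\ge 0$, since for the dependent sequences $S_\ell(c,A)$ of Theorem~\ref{thm:shift} one has $\beta=c/2^\ell$ with $\lb\le c$.
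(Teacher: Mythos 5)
The statement you were asked to prove is stated in the paper as a \emph{conjecture}; the author offers no proof, only the remark that $\beta(A)\ge 0$ ``appears true from data.'' So there is no argument in the paper to compare yours against, and your honest assessment that you cannot close the gap is the correct verdict: this appears to still be open. To your credit, you do not claim a proof.

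Your partial work is sound and worth recording. The reduction $d_{k+1}=2d_k$ (so $d_k=\beta\cdot 2^k$ and the conjecture collapses to the single finite inequality $a_{2^{k_0}-\s}\ge a'_{2^{k_0}}$) follows correctly from the recurrences proved in Propositions~\ref{alpha} and~\ref{beta}. The counting identity $a_{2^k-\s}=(2^k-\s)+|\mathrm{Cov}(Q_k)\cap[0,a_{2^k-\s})|+|O(A)|$ and its analogue for the core are correct, using that terms of a Stanley sequence are never covered, that a cover of an integer below $a_{2^k-\s}$ must be witnessed inside the prefix, and that $\om(A)<a_{2^{k_0}-\s}$ by the definition of an adequate index. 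And your diagnosis of the obstruction is the right one: the recursion is self-referential, the Cover-shift Lemma transports covers only in the direction that would be needed to \emph{confirm} $\beta\ge 0$ once assumed, and no purely algebraic rearrangement can fix the sign. Two small caveats. First, in the paragraph sketching ``if $\beta\ge 0$ then $Q_k$ covers enough,'' you transport each covered $x$ via a witnessing pair of blocks, but you would still need to argue that this transport is injective (different witnesses could land on the same integer); since that paragraph is circular in any case, the gap is harmless. Second, the re-derivation of $\lb\ge 0$ at the end duplicates a proposition the paper already proves; it is correct but not new.

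You might sharpen the final sentence: for $S_k(c,A)$ with $S(A)$ independent one indeed gets $\at_{2^j}=a_{2^j}+2^{j-k}c$, hence $\alpha$ unchanged and $\beta=c/2^k\ge \lb/2^k\ge 0$, which verifies the conjecture on that entire family of dependent sequences and is the kind of extremal/constructive evidence that supports, but does not prove, the general statement.
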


As a corollary to Proposition \ref{beta}, we obtain the following welcome result.

\begin{cor} All regular Stanley sequences follow Type 1 growth.
\end{cor}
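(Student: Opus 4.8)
The plan is a short deduction from Proposition~\ref{beta}. Let $S(A)=\{a_{n}\}$ be regular with shift index $\sigma=\sigma(A)$, and let $\alpha=\alpha(A)>0$ and $\beta=\beta(A)$ be the constants supplied by that proposition, so that for all large $k$ one has $a_{2^{k}-\sigma}=\alpha\cdot 3^{k}+\beta\cdot 2^{k}$. Since $\alpha>0$ and $2^{k}=o(3^{k})$, this already yields $a_{2^{k}-\sigma}=\Theta(3^{k})$, and hence also $a_{2^{k+1}-\sigma}=\Theta(3^{k+1})=\Theta(3^{k})$.

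Given a large $n$, let $k=k(n)$ be the unique index with $2^{k}-\sigma\le n<2^{k+1}-\sigma$; because $\sigma$ is a fixed constant, the intervals $[2^{k}-\sigma,2^{k+1}-\sigma)$ partition a tail of the non-negative integers, so $k(n)$ is well defined and $k(n)\to\infty$ as $n\to\infty$. Once $k$ is large enough that $2^{k}>2|\sigma|$, every $n$ in the $k$-th block satisfies $2^{k-1}<n<2^{k+2}$, whence $n^{\log_{2}3}=\Theta\!\big((2^{k})^{\log_{2}3}\big)=\Theta(3^{k})$. On the other hand, the Stanley sequence is strictly increasing, so $a_{2^{k}-\sigma}\le a_{n}<a_{2^{k+1}-\sigma}$, and by the previous paragraph $a_{n}=\Theta(3^{k})$ too.

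Putting these together, $a_{n}/n^{\log_{2}3}=\Theta(3^{k})/\Theta(3^{k})$ is bounded between two positive constants for all large $n$; that is, $a_{n}=\Theta(n^{\log_{2}3})$, which is Type~1 growth. I do not anticipate a real obstacle: the entire content is Proposition~\ref{beta} together with the observation that across each dyadic block the sequence roughly triples while the index roughly doubles. (To obtain the sharp form $\tfrac{c}{2}\,n^{\log_{2}3}\le a_{n}\le c\,n^{\log_{2}3}$ with $c=\alpha(A)$ — as for $S(0,1,7)$ in the Remark — one would further analyse how $a_{n}$ varies inside a block via the core identity $a_{2^{k}-\sigma+i}=a_{2^{k}-\sigma}+a'_{i}$ and the elementary inequality $x^{p}+y^{p}\le(x+y)^{p}\le 2x^{p}+y^{p}$ valid for $p=\log_{2}3$ and $0\le y\le x$, the right-hand half holding because $2^{\log_{2}3}=3$ forces equality at $y=x$; that refinement is delicate only through the lower-order term $\beta\cdot 2^{k}$ and is unnecessary for the $\Theta$ statement asserted here.)
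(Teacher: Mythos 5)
Your main argument is correct and is in fact exactly the route the paper takes: the paper states the corollary immediately after Proposition~\ref{beta} with no written proof, and the intended deduction is precisely the one you give—read off $a_{2^k-\sigma}=\alpha\cdot 3^k+\beta\cdot 2^k$, locate each $n$ in its dyadic block $[2^k-\sigma,\,2^{k+1}-\sigma)$, and compare $a_n$ (which triples across a block by monotonicity) against $n^{\log_2 3}$ (which also triples, since the index doubles). Your handling of the finite shift $\sigma$ and of the $o(3^k)$ correction $\beta\cdot 2^k$ is clean. So as far as the paper itself goes, your argument is a complete and explicit version of what the paper merely asserts.

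The one point worth flagging is the one you flag yourself: the paper's Type~1 growth is not merely $a_n=\Theta(n^{\log_2 3})$ but the sharper two-sided bound $\tfrac{c}{2}\,n^{\log_2 3}\le a_n\le c\,n^{\log_2 3}$ with ratio exactly $2$, and your main proof only establishes the $\Theta$ statement. Your parenthetical sketch of how to recover the factor-$2$ form is in the right direction—the inequalities $x^p+y^p\le(x+y)^p\le 2x^p+y^p$ for $p=\log_2 3$ are the correct elementary input, and the endpoint calculation ($a_{2^{k+1}-\sigma-1}\approx\tfrac{3}{2}\alpha\cdot 3^k$ against $n^{p}\approx 3^{k+1}$, giving $\tfrac{\alpha}{2}$, versus $a_{2^k-\sigma}\approx\alpha\cdot 3^k$ against $3^k$, giving $\alpha$) shows the extremes line up. But making this rigorous requires an induction through the recursive structure $a_{2^k-\sigma+i}=a_{2^k-\sigma}+a'_i$ (and, for the core, $a'_{2^k+j}=a'_{2^k}+a'_j$) together with care about the lower-order term $\beta\cdot 2^k$ and the finitely many indices below the first adequate $k$; you have not carried that out. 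Since the paper supplies no proof at all, your $\Theta$ argument is at least as rigorous as the original text; but if the sharp factor-$2$ bound is really intended by ``Type~1,'' neither your proof nor the paper's (nonexistent) one fully delivers it.
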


Indeed, our investigation of Stanley sequences suggests that the dichotomy between regular and irregular sequences corresponds precisely with the dichotomy hypothesized in \cite{stanley} between Type 1- and Type 2-growth sequences.

\begin{conj} All irregular Stanley sequences follow Type 2 growth.
\end{conj}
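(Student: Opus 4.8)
Since this statement is conjectural we describe a program of attack rather than a complete argument. Write $A(x)=\#\{n:a_n\le x\}$ for the counting function of the irregular sequence $S(A)$. In this language Type 2 growth $a_n\approx c'n^2/\log n$ is equivalent to $A(x)\approx c''\sqrt{x\log x}$, and it suffices to establish the two one-sided estimates $a_n=\Omega(n^2/\log n)$ and $a_n=O(n^2/\log n)$ (the sharp constant, i.e.\ the existence of $\lim a_n\log n/n^2$, being a more delicate secondary goal). Moy's theorem \cite{moy} already supplies $a_n=O(n^2)$, which is a logarithmic factor too weak; the lower bound is entirely new and, as we now explain, carries the essential content of the conjecture.

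The lower bound $a_n=\Omega(n^2/\log n)$ is a rigidity statement in disguise. Its contrapositive asserts: if $A(x)/\sqrt{x\log x}\to\infty$ along some unbounded set of $x$, i.e.\ if $S(A)$ is ``occasionally very dense'', then $S(A)$ is regular, contradicting irregularity. To prove the rigidity claim I would exploit the doubling index. All of the interval $(a_{2^k-1},2a_{2^k-1})$ must be covered by the first $2^k$ terms, so the greedy rule forces $a_{2^k}$ to exceed $2a_{2^k-1}$ by only a bounded correction $\delta_k:=a_{2^k}-2a_{2^k-1}+\lb-1$; sufficient density should force $\delta_k$ to be eventually constant in $k$, which is exactly an equation of the form (\ref{another}) (or, after a bounded shift $\s$, of the form satisfied by a regular sequence). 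One then argues, exactly as in Proposition \ref{alpha} and in the propagation proposition preceding it in Section \ref{sec:regular}, that a \emph{single} block $\{a_{2^k},\dots,a_{2^{k+1}-1}\}$ which is an exact translate of $\{a_0,\dots,a_{2^k-1}\}$ forces the same for all larger $k$. Thus it would be enough to produce one exactly self-similar block; the difficulty is to show that high density forces this \emph{exact} coincidence and not merely an approximate one.

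The upper bound $a_n=O(n^2/\log n)$ requires improving Moy's estimate by a logarithmic factor. The starting point is the counting inequality behind \cite{moy}: with $m=A(x)$, every integer of $(\om(A),a_{m-1}]$ not equal to some $a_i$ is of the form $2a_j-a_i$ with $i<j<m$, so $a_{m-1}\le\binom m2+O(1)$. To gain the extra $\log$ one must analyze the gaps $a_{m+1}-a_m$ and the covers $2a_j-a_i$ that straddle $a_m$, showing that the ``difference-like'' configuration $\{2a_j-a_i\}$ cannot tile an interval of length $\sim x$ without wasting a $\log x$ proportion of its $\binom m2$ potential covers; the Cover-shift Lemma (Lemma \ref{covershift}) should let one propagate each such collision forward along the sequence and count $\Omega(\log x)$ layers of them. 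I expect this step to be technically delicate and to require an additive-combinatorial input, but to be secondary to the rigidity statement above.

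The principal obstacle is therefore the inverse theorem of the second paragraph: that any departure from the self-similar block structure necessarily thins $S(A)$ to Type 2 density. No such theorem is known; indeed, even the weaker asymptotic dichotomy of Conjecture \ref{stanconj} is open, so a realistic first step would be a conditional result, e.g.\ assuming the nonnegativity conjecture $\beta(A)\ge0$ for regular sequences, or a quantitative strengthening of \cite{moy}. Establishing the two one-sided bounds above, even with non-matching implied constants, would already confirm $a_n=\Theta(n^2/\log n)$ for all irregular sequences and pin down the exponent $2$, reducing the full conjecture to the determination of the constant $c'$.
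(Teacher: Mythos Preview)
The paper offers no proof of this statement: it is presented purely as a conjecture, with no accompanying argument, heuristic, or sketch. You correctly recognize this and frame your write-up as a program rather than a proof, so there is nothing in the paper to compare your approach against.

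That said, a few points in your sketch would need to be tightened before it could serve even as a credible roadmap. First, your quantity $\delta_k=a_{2^k}-2a_{2^k-1}+\lb-1$ invokes the character $\lb$, but $\lb$ is defined only for regular sequences; for an irregular $S(A)$ there is no such constant, so the expression is circular. Presumably you mean to study the raw differences $a_{2^k}-2a_{2^k-1}$ and show they stabilize, but then $\lb$ should not appear. Second, the claim that ``all of the interval $(a_{2^k-1},2a_{2^k-1})$ must be covered by the first $2^k$ terms'' is not correct as stated: what is forced is that every integer strictly between $a_{2^k-1}$ and $a_{2^k}$ (and above $\om(A)$) is covered by $\{a_0,\dots,a_{2^k-1}\}$, and for an irregular sequence there is no reason $a_{2^k}$ should be near $2a_{2^k-1}$. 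More broadly, the indices $2^k$ are distinguished precisely by the \emph{regular} block structure; for a genuinely irregular sequence there is no a priori reason to privilege them, so the ``doubling index'' heuristic begs the question. Finally, the propagation step you cite (``a single self-similar block forces the same for all larger $k$'') is proved in the paper only under the hypothesis $a_{2^k-1}\ge\lb+\om$ for a regular sequence; extracting a version of it that applies before regularity is known is itself part of the problem.
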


We also mention a useful property which appears to hold for all regular sequences.

\begin{Def} Let $S(A)=\{a_n\}$ and $S(A')=\{a'_n\}$ be Stanley sequences.  We say that $S(A)$ is \emph{faithful} to $S(A')$ if, for each $a'_n<\om(A')$ there exists some $m$ for which $a_m=a'_n$.
\end{Def}

\begin{conj} Every regular sequence is faithful to its core.
\end{conj}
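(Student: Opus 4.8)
The plan is to derive faithfulness from the block structure of a regular sequence, reusing the covering-propagation argument of Section~\ref{sec:regular} (the one showing that the defining equations of an independent sequence at a single level force them at all higher levels). Two families of cases are immediate and should be recorded first. If $S(A)$ is independent then $S'(A)=S(A)$, so faithfulness is trivial; this already disposes of the sequences of Theorems~\ref{thm:monotone} and~\ref{thm:digits}, as well as the independent instances of Theorem~\ref{thm:product}. If $S'(A)$ has empty omitted set the statement is vacuous. And for the dependent sequences $S_k(c,A)$ of Theorem~\ref{thm:shift}, the nucleating set $A_k(c,A)$ literally contains $\{a_i : 0\le i<2^k-\sigma(A)\}$, which for $k\ge\ell$ contains the entire minimal nucleating set of the core and hence every element of the core below its omitted value; so those sequences are faithful as well. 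The content of the conjecture thus lies entirely in regular sequences not arising from these constructions.

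So suppose $S(A)=\{a_n\}$ is dependent, with core $\{a'_n\}=S'(A)$, character $\lambda$, shift index $\sigma$, and $\omega'=\omega(A')\ge 0$. Fix an adequate $k_0$ and write $H=\{a_i : 0\le i<2^{k_0}-\sigma\}$ for the head and $\Gamma_k=\{a_i : 2^k-\sigma\le i<2^{k+1}-\sigma\}$ for the $k$th block, so that $\Gamma_k=a_{2^k-\sigma}+\{a'_0,\dots,a'_{2^k-1}\}$ and $a_{2^k-\sigma}=2a_{2^k-\sigma-1}-\lambda+1$ for all $k\ge k_0$. Because $\omega'$ is strictly below the largest element of the minimal nucleating set of $S(A')$, the indices $n$ with $a'_n<\omega'$ form a finite set, and the goal is to show each such $a'_n$ already lies in $H$. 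The mechanism I would use is the one from the independent case: running the greedy algorithm through the transition from the initial segment $H\cup\Gamma_{k_0}\cup\dots\cup\Gamma_k$ to the block $\Gamma_{k+1}$ forces, via the Cover-shift Lemma (Lemma~\ref{covershift}), that this segment cover a prescribed set of integers in the window just below $a_{2^{k+1}-\sigma}=2a_{2^{k+1}-\sigma-1}-\lambda+1$. Decomposing that covered set as in the propagation proof --- into integers covered inside $H$, the integers of the transition gap, and translated copies of $O(A')$ and of the integers covered by prefixes of the core --- and matching it against the analogous decomposition for $S(A')$ itself, one should extract the requirement that $H$, jointly with its block-shifts, covers a translate $O(A')+c$. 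Pushing this constraint down to the unshifted interval $[0,\omega']$ would then force the covered-or-omitted profile of $S(A)$ on $[0,\omega']$ to match that of $S(A')$; since, by the defining property of a Stanley sequence, the core elements $a'_n<\omega'$ are exactly the integers of $[0,\omega']$ that are neither in $O(A')$ nor covered within $[0,\omega']$ by smaller elements, each such $a'_n$ would thereby be deposited into $H\subseteq S(A)$.

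The hard step is the descent to $[0,\omega']$. In the independent case the set whose covering behaviour one analyzes is an honest prefix of the sequence itself, so its covered set is known exactly; here the head $H$ of $S(A)$ need bear no a priori resemblance to the head of the core $S(A')$ --- the only link is that $H$, followed by the forced value $a_{2^{k_0}-\sigma}=2a_{2^{k_0}-\sigma-1}-\lambda+1$, must launch the core's block recursion. Deciding which 3-free sets $H$ can play this role is an inverse problem for the greedy algorithm, and I do not see how to rule out, from the transition constraints alone, a head that covers some $a'_n<\omega'$ by an atypical pair of its own elements while still feeding correctly into the block structure. Showing that the covering constraints accumulated at the transitions $\Gamma_k\to\Gamma_{k+1}$, over all adequate $k$, are rigid enough to pin $H$ to $S(A')$ on $[0,\omega']$ is the essential obstacle, and is the reason the statement appears here as a conjecture rather than a theorem.
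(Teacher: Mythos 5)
This statement is posed as a conjecture in the paper and carries no proof, so there is no paper argument to compare against; you correctly recognize this and stop short of claiming a full proof. The special cases you verify are all sound. For independent $S(A)$ faithfulness is tautological since $S'(A)=S(A)$. For cores with empty omitted set the condition is vacuous. And for the shifted sequences $S_k(c,A)$ of Theorem~\ref{thm:shift}, the nucleating set $A_k(c,A)$ literally contains the prefix $\{a_i : 0\le i<2^k\}$ of the core; by Lemma~\ref{omlb} and adequacy one has $\omega(A)<\lambda\le a_{2^k-1}$, so every core element below $\omega(A)$ already sits in that prefix. Your diagnosis of the general obstacle is also on target. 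The propagation argument in Section~\ref{sec:regular} (the unlabeled proposition showing that the independence equations at one level force them at all higher levels) works because the set whose covering profile is tracked is an honest prefix of the sequence itself, so its covered set is known exactly. In a dependent sequence the head is constrained only indirectly, through the requirement that it launch the core's block recursion with the right omitted set and transition value, and nothing in the regularity definition visibly prevents the head from covering some $a'_n<\omega(A')$ by a pair of its own elements while omitting $a'_n$ itself. Establishing that the transition constraints are rigid enough to pin the head to $S'(A)$ on $[0,\omega(A')]$ is exactly the missing lemma, and your proposal stops precisely there, as it should. What you have is a correct partial verification together with an accurate statement of what remains open, not a proof.
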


\subsection{The character}

We conclude this section with a consideration of the range of the character function.

\begin{prop} Let $S(A)$ be a regular Stanley sequence.  Then $\lb(A)\ge 0$, with $\lb\ne 1,3$.
\end{prop}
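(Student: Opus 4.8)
The plan is to reduce to the independent case and then establish the three assertions separately.

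\emph{Reduction and setup.} Since $\lb(A)$ is by definition the character of the core $S'(A)$, an independent Stanley sequence (hence in root position, so its first term is $0$), it suffices to prove the statement for an independent sequence $S(A)=\{a_n\}$ with character $\lb$ and $a_0=0$. I would fix $k$ large enough that the independence relations $a_{2^k}=2a_{2^k-1}-\lb+1$ and $a_{2^k+1}=a_{2^k}+a_1$ hold and that $a_{2^k-1}$ exceeds every element of the nucleating set of $S(A)$, and set $\Lb=\{a_0,\dots,a_{2^k-1}\}$. The observation to record is: for $a_{2^k-1}<x\le a_{2^k}$, $x$ lies in $S(A)$ iff it is not covered by $S(A)$, and if it is covered then a covering pair $s<t$ forces $t<x=2t-s$, so $t\le a_{2^k-1}$; thus in fact $x$ is covered by $\Lb$.

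\emph{$\lb\ge 0$ and $\lb\ne 1$.} The integers in $(a_{2^k-1},a_{2^k})$ are missing from $S(A)$, hence covered by $\Lb$ by the observation, whereas $a_{2^k}\in S(A)$ is not; so $a_{2^k}$ is the smallest integer exceeding $a_{2^k-1}$ that is not covered by $\Lb$. Since every integer covered by $\Lb$ equals $2t-s$ with $0\le s<t\le a_{2^k-1}$ and so is at most $2a_{2^k-1}$, we get $a_{2^k}\le 2a_{2^k-1}+1$, i.e. $\lb\ge 0$. And if $\lb=1$ then $a_{2^k}=2a_{2^k-1}$, so $\{0,a_{2^k-1},a_{2^k}\}$ would be a $3$-term arithmetic progression inside $S(A)$, contradicting $3$-freeness.

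\emph{$\lb\ne 3$.} Suppose $\lb=3$, so $a_{2^k}=2a_{2^k-1}-2$ and $2a_{2^k-1}-1=a_{2^k}+1$; note that $\{1,a_{2^k-1},2a_{2^k-1}-1\}$ is a $3$-term progression with common difference $a_{2^k-1}-1$. I would split on whether $1\in S(A)$. If $1\in S(A)$, then $a_1=1$, so $a_{2^k+1}=a_{2^k}+a_1=a_{2^k}+1=2a_{2^k-1}-1\in S(A)$, and the whole progression sits in $S(A)$ — a contradiction. If $1\notin S(A)$, then $a_1\ge 2$, so $a_{2^k}+1$ falls strictly between the consecutive terms $a_{2^k}$ and $a_{2^k+1}=a_{2^k}+a_1$ of $S(A)$ and hence is not in $S(A)$; being past the nucleating set it must be covered, say $2t-s=a_{2^k}+1$ with $s<t$ in $S(A)$. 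Then $t<a_{2^k}+1$ and $2t\ge 2a_{2^k-1}-1$ squeeze $t$ into $[a_{2^k-1},a_{2^k}]$, where the only members of $S(A)$ are $a_{2^k-1}$ and $a_{2^k}$; these force $s=1$ and $s=a_{2^k}-1$ respectively, but $1\notin S(A)$ and $a_{2^k}-1$ lies in the gap $(a_{2^k-1},a_{2^k})$, so in neither case does $s$ lie in $S(A)$ — a contradiction. Hence $\lb\ne 3$.

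I expect $\lb\ne 3$ to be the genuine obstacle: unlike $\lb=1$, the equation $a_{2^k}=2a_{2^k-1}-2$ does not by itself force a $3$-term progression among the terms already present — for instance $S(0,3)=0,3,4,7,\dots$ initially resembles an independent sequence of character $3$ before the pattern breaks down — so one must bring in the independence relation $a_{2^k+1}=a_{2^k}+a_1$ and the greedy covering rule and argue by cases according to whether $1$ is a term of the sequence.
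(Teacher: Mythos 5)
Your proof is correct and follows essentially the same approach as the paper: reduce to the independent case, bound $a_{2^k}$ above by noting integers in the gap $(a_{2^k-1},a_{2^k})$ must be covered by $\{a_0,\dots,a_{2^k-1}\}$ to get $\lb\ge 0$, rule out $\lb=1$ via the progression $\{0,a_{2^k-1},a_{2^k}\}$, and rule out $\lb=3$ by splitting on whether $1\in S(A)$. You are slightly more explicit than the paper in the $1\notin S(A)$ subcase, where you also rule out $t=a_{2^k}$ as a covering term; the paper tacitly assumes the covering pair lies in $T$.
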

\begin{proof}
Let $\lb(A)=\lb$.  We may assume without loss of generality that $S(A)$ is independent, since the core of $S(A)$ has the same character as $S(A)$ itself.  Then, consider some adequate $k$, so that $$a_{2^k}=2a_{2^k-1}-\lb+1$$holds.  We note that since $a_{2^k}-1=2a_{2^k-1}-\lb$ is not in $S(A)$, it must be covered by the set $T=\{a_0,a_1,\ldots,a_{2^k-1}\}$ and hence can be at most $2a_{2^k-1}$.  We conclude that $\lb\ge 0$.  Further, we note that since $2a_{2^k-1}$ is certainly covered by $T$, the character $\lb$ cannot be $1$.

Suppose for the sake of contradiction that $\lb=3$.  If $1\in S(A)$, then $a_{2^k+1}=a_{2^k}+a_1=a_{2^k}+1$ by regularity.  Because $a_{2^k}=2a_{2^k-1}-2$, we conclude that $$a_{2^k+1}=2a_{2^k-1}-1,$$which is a contradiction since then $1,a_{2^k-1},a_{2^k+1}$ form an arithmetic progression.  We conclude that $1\not\in S(A)$, which means that $2a_{2^k-1}-1$ must be covered by $T$.  Suppose $2t-s=2a_{2^k-1}-1$ for $s,t\in T$.  Since the greatest element of $T$ is $a_{2^k-1}$, we must have $t=a_{2^k-1}$, because smaller $t$ would force $s$ to be negative.  But then $s=1$, which is a contradiction, since we know $1\not\in S(A)$.  We conclude that $\lb\ne 3$.
\end{proof}

For further investigation of forbidden character values, the following lemma is useful.

\begin{lem} If $S(A)$ is independent, then $\om(A)<\lb(A)$.
\label{omlb}
\end{lem}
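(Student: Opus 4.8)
The plan is to argue by contradiction: suppose $S(A) = \{a_n\}$ is independent with character $\lb = \lb(A)$ and $\om = \om(A) \ge \lb$. Fix an adequate index $k$, large enough that the defining equations (\ref{an}) and (\ref{another}) hold and that $a_{2^k-1}$ dwarfs $\om$ and $\lb$. The key object to examine is the gap just before $a_{2^k}$: by equation (\ref{another}), $a_{2^k} = 2a_{2^k-1} - \lb + 1$, so the integers $a_{2^k-1}+1, a_{2^k-1}+2, \ldots, a_{2^k}-1 = 2a_{2^k-1}-\lb$ are all omitted from $S(A)$ and hence must be covered by the prefix $T = \{a_0,\ldots,a_{2^k-1}\}$. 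In particular, all of $2a_{2^k-1} - \lb + 1, 2a_{2^k-1} - \lb + 2, \ldots$ up to $a_{2^k}-1$ — wait, more usefully: I want to pin down which integers near $2a_{2^k-1}$ are covered by $T$ versus which are in $S(A)$, and play this off against the block structure.

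Here is the mechanism I expect to use. Because $S(A)$ is independent, $\Gm_k = \Lb + a_{2^k}$ where $\Lb = \{a_i : 0 \le i < 2^k\}$, and from the structure analysis in Proposition 2.5 the integers covered by $\Lb$ in the range $(a_{2^k-1}, \infty)$ that are below $3a_{2^k}$ are exactly $(a_{2^k-1}, a_{2^k}) \cup (O(A) + a_{2^k}) \cup (B + a_{2^k}) \cup \cdots$. The point is that $a_{2^k} + \om \in O(A) + a_{2^k}$ is covered by $\Lb$ but is \emph{not} in $S(A)$ (since $\om \in O(A)$ means $\om$ is not in $S(A)$, so $a_{2^k} + \om = a_{2^k} + a$... no — I must be careful: $\om \notin S(A)$, and by independence $a_{2^k} + \om$ is in $S(A)$ iff $\om$ is, so $a_{2^k}+\om \notin S(A)$). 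Now $a_{2^k} + \om = 2a_{2^k-1} - \lb + 1 + \om \ge 2a_{2^k-1} + 1 > 2a_{2^k-1}$ using the hypothesis $\om \ge \lb$. So $a_{2^k}+\om$ is an integer strictly greater than $2a_{2^k-1} = 2\max(T)$, yet it is covered by $T = \Lb$. That is impossible: if $2t - s = a_{2^k}+\om$ with $s < t$, both in $T$, then $a_{2^k}+\om = 2t - s < 2t \le 2a_{2^k-1}$, contradiction. This contradiction establishes $\om < \lb$.

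The main obstacle, and the step I'd spend the most care on, is making rigorous the claim that $a_{2^k}+\om$ is actually covered by $S(A)$ — one must check it is covered by $\Lb$ specifically (so that the "$< 2a_{2^k-1}$" bound bites), rather than merely being covered by $S(A)$ using some large element beyond $a_{2^k}$. This is exactly the content of the third bullet in the proof of Proposition 2.5 (the argument that $O(A)+a_{2^k}$ is covered by $\Lb$, using $a_{2^k-1} \ge \lb + \om$), so I would either invoke that proposition directly or reproduce its short argument: any $w$ with $2w - x = a_{2^k}+\om$ and $x \in \Lb$ satisfies $2w \le a_{2^k-1} + a_{2^k} + \om \le 2a_{2^k}$, forcing $w < a_{2^k}$, i.e. $w \in \Lb$. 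Once that is in hand, the size contradiction $a_{2^k}+\om > 2a_{2^k-1}$ versus $a_{2^k}+\om = 2w-x < 2a_{2^k-1}$ closes the argument immediately. A secondary point to verify is simply that such an adequate $k$ exists with $a_{2^k-1} \ge \lb + \om$, which is automatic since $a_n \to \infty$.
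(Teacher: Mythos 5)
Your proof takes essentially the same route as the paper's: both consider the integer $x=a_{2^k}+\om(A)$, argue that it must be covered by the prefix $T=\{a_0,\ldots,a_{2^k-1}\}$, and conclude from the bound $x\le 2a_{2^k-1}=a_{2^k}+\lb-1$ that $\om(A)<\lb(A)$; you merely phrase the endgame as a contradiction with the hypothesis $\om\ge\lb$ rather than as a direct inequality. The one place you gloss over is the step showing the covering pair $(s,t)$ for $x$ lies entirely in $T$: the fragment you reproduce from Proposition 2.5 only shows that if the smaller element is already in $\Lb$ then the larger is too, whereas the paper's proof of the lemma also disposes of the case where \emph{neither} element is in $T$ (both then lie in $\Gm_k=\Lb+a_{2^k}$ for $k$ large, so $\om=x-a_{2^k}$ would be covered by $S(A)$, contradicting $\om\in O(A)$). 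Supplying that short extra case would close your argument completely.
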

\begin{proof} Take some extremely large integer $k$ and let $T=\{a_0,a_1,\ldots,a_{2^k-1}\}$.  Let $x=\om(A)+a_{2^k}$.  We know $x$ is covered by $S(A)$, so let $s,t\in S(A)$ be such that $s<t$ and $2t-s=x$.   If neither $s$ nor $t$ is in $T$, then $s'=s-a_{2^k}$ and $t'=t-a_{2^k}$ must be in $S(A)$ and must satisfy $2t'-s'=x-a_{2^k}$.  Since $x-a_{2^k}\in O(A)$ and thus cannot be covered by $S(A)$, this is impossible, so at least one of $s,t$ must be in $T$.  If only $s$ is in $T$, then $$2t-s\ge 2a_{2^k}-a_{2^k-1}=a_{2^k}+a_{2^k-1}-\lb(A)+1,$$which is larger than $x$ because $k$ is large.  Hence, both $s,t$ must be in $T$.  Since the maximum integer covered by $T$ is $2a_{2^k-1}=a_{2^k}+\lb(A)-1$,  the lemma follows.
\end{proof}

\begin{cor} At most finitely many independent Stanley sequences exist with a given character $\lb$.
\end{cor}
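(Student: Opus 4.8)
The plan is to bound the minimal nucleating set of an independent Stanley sequence in terms of its character. Fix a nonnegative integer $\lb$. Recall that every Stanley sequence is recovered from its minimal nucleating set, which (as noted above) is one of the prefixes of the sequence; in particular distinct Stanley sequences have distinct minimal nucleating sets. Thus it suffices to produce an integer $N=N(\lb)$ such that the minimal nucleating set of every independent Stanley sequence with character $\lb$ lies inside $\{0,1,\ldots,N\}$, for then there are at most $2^{N+1}$ such sequences.

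The key elementary observation is: if $S(A)=\{a_n\}$ is any Stanley sequence whose minimal nucleating set is $\{a_0,\ldots,a_m\}$, then every integer $x$ with $a_{m-1}<x<a_m$ that is covered by $S(A)$ is already covered by $\{a_0,\ldots,a_{m-1}\}$ — because writing $x=2t-s$ with $s<t$ in $S(A)$ forces $x/2<t<x<a_m$, so both $s,t\in\{a_0,\ldots,a_{m-1}\}$. Using this I would first show that $a_{m-1}<\om(A)$ (assuming $m\ge 1$). If instead $a_{m-1}\ge\om(A)$, then every $x\in(a_{m-1},a_m)$ satisfies $x\notin S(A)$ and $x>\om(A)$, so $x\notin O(A)$ and hence $x$ is covered by $S(A)$; by the observation, $x$ is covered by $\{a_0,\ldots,a_{m-1}\}$. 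Therefore the greedy continuation of $\{a_0,\ldots,a_{m-1}\}$ skips every integer of $(a_{m-1},a_m)$, next selects $a_m$ (which is not covered by $\{a_0,\ldots,a_{m-1}\}$, else $\{a_0,\ldots,a_m\}$ would contain a three-term progression), and from then on agrees with $S(\{a_0,\ldots,a_m\})=S(A)$. So $\{a_0,\ldots,a_{m-1}\}$ would be a nucleating set of smaller cardinality, contradicting minimality.

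It remains to bound $a_m$ itself. If $a_m\le\om(A)+1$ we are done; otherwise every integer $x$ with $\om(A)<x<a_m$ lies in $(a_{m-1},a_m)$, is not in $S(A)$, is not in $O(A)$, and so is covered by $\{a_0,\ldots,a_{m-1}\}$ — whence $x\le 2a_{m-1}$. Taking $x=a_m-1$ and using $a_{m-1}<\om(A)$ gives $a_m\le 2\om(A)-1$. Either way $a_m\le 2\om(A)+1$. Now Lemma \ref{omlb} gives $\om(A)<\lb(A)=\lb$, so $a_m\le 2\lb$, i.e.\ the minimal nucleating set is contained in $\{0,1,\ldots,2\lb\}$. (The single sequence with $m=0$, namely $S(0)$, has character $0$ and contributes at most one more.) Hence there are at most $2^{2\lb+1}+1$ independent Stanley sequences with character $\lb$, which is finite.

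I expect the main obstacle to be the inequality $a_{m-1}<\om(A)$: this is the one place where minimality of the nucleating set does real work, and it hinges on the observation that an integer lying below $a_m$ cannot be covered with the help of later terms of the sequence. Once that is established, the bound on $a_m$ and the invocation of Lemma \ref{omlb} are routine.
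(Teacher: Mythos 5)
Your proof is correct and follows essentially the same route as the paper: bound the top element $a_m$ of the minimal nucleating set by roughly $2\om(A)$, then invoke Lemma \ref{omlb} to convert this into a bound in terms of $\lb$. The paper states the inequalities $\om>a_{m-1}$ and $a_m\le 2\om+1$ without justification; you supply the (correct) covering argument that fills in those steps, but the underlying idea is identical.
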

\begin{proof} Suppose $S(A)=\{a_n\}$ is independent with $\lb=\lb(A)$ and $\om=\om(A)$, such that $A=\{a_0,a_1,\ldots,a_m\}$ is the minimal nucleating set for the sequence.  Since $A$ is minimal, $\om>a_{m-1}$.  Also, we must have $\om=a_m-1$ unless $a_m-1$ is itself covered by $A$, which can only occur if $a_m-1\le 2a_{m-1}$.  Since $a_{m-1}$ is bounded above by $\om$, this implies that $a_m$ is bounded above by $2\om+1$.

Now, the preceding lemma tells us that $\om<\lb$.  Hence, $a_m\le 2\lb-1$, implying the desired result.
\end{proof}

This corollary tells us that whether or not a given character is possible for an independent (and hence regular) $S(A)$ can be ascertained by checking a finite number of potential nucleating sets $A$.  We have examined (by computer) these possible nucleating sets for many character values; our data suggest that $1,3,5,9,11,15$ are impossible for the character function.  (However, this result is not certain since it assumed the irregularity of various Stanley sequences, while as yet no Stanley sequence has been shown definitively to be irregular.)  For all other characters up to 76, we have found corresponding regular sequences.  (See the appendix for sample data.)  A method we will outline in the next section suggests that all sufficiently large values are possible for the character function.  We therefore offer the following conjecture.

\begin{conj} The range of the character function is exactly the set of integers $n$ that are at least $0$ and are not in the set $\{1,3,5,9,11,15\}$.
\label{characterrange}
\end{conj}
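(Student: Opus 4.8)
As the statement is an equality of sets, one must argue two directions: that none of $5,9,11,15$ (and no negative integer) is a character, and that every other integer $n\ge 0$ is. Nonnegativity and the exclusions $\lb\ne 1,3$ are already proved above, so the real content is ruling out $\lb\in\{5,9,11,15\}$ and realising all the remaining values. In both directions we may restrict to independent sequences, since a dependent regular sequence has the same character as its independent core; thus the range of the character function over regular sequences coincides with its range over independent ones.

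For the achievability direction I would split at a fixed bound. For target values below the bound one exhibits explicit minimal nucleating sets, as in the appendix, and certifies independence and the value of $\lb$ by the single-block independence criterion of Section \ref{sec:regular}. For values above the bound, the constructions of Section \ref{sec:indep} take over: Theorem \ref{thm:product} gives $\lb(A\otimes_k B)=a_{2^k}(A)\cdot\lb(B)+\lb(A)$, while Theorems \ref{thm:monotone} and \ref{thm:digits} provide parametrised families of independent sequences whose characters are explicit functions of the defining data ($\mathcal A$ in the first, $T_1,T_2$ in the second). One computes these character formulas, checks that together with the appendix data they realise a cofinite set of integers whose complement is contained in $\{1,3,5,9,11,15\}$, and, if necessary, closes residual gaps above $15$ by forming $\otimes_k$-products of already-constructed sequences. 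The authors already indicate that this direction should be completable using the Section \ref{sec:indep} constructions; the work is combinatorial rather than conceptual, though it does require pinning down the character formulas for those families and verifying that no gap survives.

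The exclusion direction is the genuine obstacle. Fix $\lb\in\{5,9,11,15\}$. By the Corollary to Lemma \ref{omlb}, an independent sequence with character $\lb$ must have a minimal nucleating set $A$ with $\om(A)<\lb$ and $\max A\le 2\lb-1\le 29$, so only finitely many candidate sets $A$ need be examined. For each candidate one runs the greedy algorithm: if $S(A)$ is independent with character $\lb$ then equations (\ref{an}) and (\ref{another}) will come into force after finitely many blocks, confirming it. The candidates that cannot be disposed of this way are exactly those for which $S(A)$ is \emph{not} independent with character $\lb$ --- and certifying \emph{that} is precisely the open problem, since the single-block criterion gives a finite test for independence but no finite test for its failure, and as yet no Stanley sequence has been proven irregular.

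Thus I expect the main obstacle to be the certification of failure-of-independence (a form of irregularity) for specific, explicitly given sequences. Completing the conjecture appears to require a new ``no return'' structural result: for instance, that once (\ref{an})--(\ref{another}) fail for enough consecutive values of $k$ --- equivalently, once the terms $a_n$ leave the Type-1 envelope $\Theta(n^{\log_2 3})$ --- they can never hold again. Such a statement would make non-independence finitely checkable and would reduce the exclusion direction to a finite computation, after which the conjecture would follow from the achievability argument sketched above. Until then, $5,9,11,15$ cannot be provably excluded, which is why the result is stated only as a conjecture.
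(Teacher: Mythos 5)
The statement you were asked to prove is labeled a \emph{conjecture} in the paper, and the paper offers no proof of it: it presents computational evidence and explicitly flags the same caveat you identify (``this result is not certain since it assumed the irregularity of various Stanley sequences, while as yet no Stanley sequence has been shown definitively to be irregular''). Your analysis tracks the paper's own discussion almost exactly and correctly: $\lb\ge 0$ and $\lb\ne 1,3$ are handled by the proposition in \S 2.3; the corollary to Lemma \ref{omlb} reduces the exclusion of each candidate $\lb$ to a finite search over nucleating sets bounded by $2\lb-1$; achievability is addressed via the appendix data plus the constructions of \S\ref{sec:indep} (the remark after Theorem \ref{thm:product} even gives the same $\lb(\{0\}\otimes_1 A)=3\lb(A)$, $\lb(\{0,2\}\otimes_1 A)=3\lb(A)+2$ type recurrences you have in mind); and the genuine obstruction is exactly what you say, namely the absence of any finite certificate for \emph{failure} of independence. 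Your proposed ``no return'' structural lemma is a sensible articulation of what would be needed to close the gap; the paper does not supply it, which is why the statement remains a conjecture, and your honest conclusion that it cannot currently be proved is the correct one.
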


We may also obtain another corollary to the preceding lemma.

\begin{cor} Every regular sequence of character $0$ has $S(0)$ as its core.
\end{cor}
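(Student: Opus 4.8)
The plan is to reduce the statement to a claim about the core of $S(A)$ alone. If $S(A)$ is regular with $\lb(A)=0$, then by definition its core $S'(A)=\{a'_n\}$ is an independent Stanley sequence whose character equals $\lb(A)=0$. So it suffices to prove that the unique independent Stanley sequence of character $0$ is $S(0)$.

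The key input is Lemma~\ref{omlb} applied to $S'(A)$: it gives $\om(S'(A))<\lb(S'(A))=0$. Since every omitted integer is nonnegative, this forces the omitted set $O(S'(A))$ to be empty; that is, every nonnegative integer either lies in $S'(A)$ or is covered by $S'(A)$.

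It then remains to show that a Stanley sequence in root position with empty omitted set must be $S(0)$. Let $A'=\{a'_0,a'_1,\ldots,a'_m\}$ be the minimal nucleating set of $S'(A)$, so that $a'_0=0$. Suppose for contradiction that $m\ge 1$. I would show that $a'_m$ is in fact the greedy continuation of $\{a'_0,\ldots,a'_{m-1}\}$, so that this smaller set is already a nucleating set of $S'(A)$, contradicting the minimality of $A'$. Indeed, any integer $x$ with $a'_{m-1}<x<a'_m$ lies strictly between consecutive terms of $S'(A)$ and so is not an element of $S'(A)$; since $O(S'(A))=\emptyset$, it must be covered by $S'(A)$, say $2t-s=x$ with $s<t$ in $S'(A)$. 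Then $t<2t-s=x<a'_m$, so $s<t\le a'_{m-1}$, and hence $x$ is already covered by $\{a'_0,\ldots,a'_{m-1}\}$; as $a'_m$ itself is not covered by this set (the sequence is $3$-free), $a'_m$ is indeed its smallest admissible continuation. Therefore $\{a'_0,\ldots,a'_{m-1}\}$ is a nucleating set of smaller cardinality, a contradiction. Hence $m=0$, the minimal nucleating set is $\{0\}$, and $S'(A)=S(0)$, as desired.

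The whole argument is short; the only step demanding any care is the bookkeeping in the third paragraph showing that a witness to the covering of $x\in(a'_{m-1},a'_m)$ can always be taken inside $\{a'_0,\ldots,a'_{m-1}\}$. This is the same flavor of estimate already used above (for instance, in the proof that only finitely many independent sequences share a given character), so no genuinely new obstacle arises; I expect the main "difficulty" to be simply recognizing that Lemma~\ref{omlb} is the right tool to invoke.
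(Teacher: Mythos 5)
Your proof is correct and follows essentially the same route as the paper: apply Lemma~\ref{omlb} to the (independent) core to get $\om<0$, conclude that the omitted set is empty, and deduce that the core is $S(0)$. The only difference is that you make the final step (empty omitted set in root position forces the minimal nucleating set to be $\{0\}$) explicit via the minimality argument, whereas the paper treats it as immediate; both arguments are sound.
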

\begin{proof} If $S(A)$ is an independent sequence with character $0$, then $\om(A)<0$ by the lemma, implying that $\om(A)$ is not defined and so $O(A)$ is empty.  Hence, $S(A)=S(0)$ and the result follows.
\end{proof}

\section{Constructing independent sequences}
\label{sec:indep}

Heretofore, the only sequences shown to follow Type 1 growth have been the sequences $S(0,3^k)$ and $S(0,2\cdot 3^k)$, for which complete descriptions were given in \cite{stanley}.  It is easily checked that these sequences are independent for any $k$.  In this section we offer several novel methods for constructing independent sequences, while in the next section we construct dependent sequences. The classes of sequences we describe include $S(0,3^k)$ and $S(0,2\cdot 3^k)$ as special cases.

For convenience in stating certain results, we define the functions $t_i$ on nonnegative integers $x$ by letting $t_i(x)$ equal the digit in the $3^i$s place in the ternary representation of $x$.  Recall Theorem \ref{thm:monotone}, which we restate here.

\begin{thm:monotone} Let $k$ be a positive integer and $\mathcal{A}$ be a monotone decreasing family of subsets of $\{0,1,\ldots,k-1\}$ (i.e., every set in $\mathcal{A}$ has all its subsets contained in $\mathcal{A}$).  Let $$A=\{3^{a_1}+3^{a_2}+\cdots+3^{a_n}\mid \{a_1,a_2,\ldots,a_n\}\in \mathcal{A}\}.$$Then, $S(A\cup \{3^k\})$ and $S(A\cup \{2\cdot 3^k\})$ are independent Stanley sequences.
\end{thm:monotone}

In particular, these sequences admit the following closed-form descriptions:

\begin{enumerate} 
\item $S(A\cup \{3^k\})$ contains exactly those integers $x\ge 0$ such that
\begin{itemize}
\item $t_i(x)=0$ or $1$ for $i\ne k$.
\item If $t_k(x)=0$, then $\sum_{i=0}^{k-1} t_i(x)3^i\in A$.
\item If $t_k(x)=2$, then $\sum_{i=0}^{k-1} t_i(x)3^i\not\in A$.
\end{itemize}
\item $S(A\cup \{2\cdot 3^k\})$ is contains exactly those integers $x\ge 0$ such that
\begin{itemize}
\item $t_i(x)=0$ or $1$ for $i\ne k,k+1$.
\item $t_k(x)=0$ or $2$.
\item If $t_k(x)=t_{k+1}(x)=0$, then $\sum_{i=0}^{k-1} t_i(x)3^i\in A$.
\item If $t_{k+1}(x)=2$, then $t_k(x)=0$ and $\sum_{i=0}^{k-1} t_i(x)3^i\not\in A$.
\end{itemize}
\end{enumerate}

\begin{ex}
\label{ex:monotone}
Take $k=3$ and $\mathcal{A}=\{\emptyset,\{0\},\{1\},\{2\},\{0,2\}\}$.  Then, \begin{eqnarray*}A&=&\{0,1,10,100,101\}\hskip .1 in \text{in base 3}\\ &=&\{0,1,3,9,10\}\hskip .1 in \text{in base 10}.\end{eqnarray*}The theorem implies that $S(0,1,3,9,10,27)$ and $S(0,1,3,9,10,54)$ are independent.  Indeed, \begin{eqnarray*}S(0,1,9,10,27)&=& 0,1,10,100,101,1000,1001,1010,1011,1100,1101,1110,1111,2011,2110,\\ &&2111,10000,10001,10010,10100,10101,11000,11001,11010,11011,11100,\\ &&11101,11110,11111,12011,12110,12111,100000,\ldots\hskip .1 in \text{in base 3}\\ &=& 0, 1, 3, 9, 10, 27, 28, 30, 31, 36, 37, 39, 40, 58, 66, 67,\\ && \underbrace{\fbox{$81, 82, 84,
90, 91, 108, 109, 111, 112, 117, 118, 120, 121, 139, 147, 148,$}}_{\Gm_4}\\ && 243,\ldots \hskip .1 in \text{in base 10}\end{eqnarray*}is independent with character $\lb=54$ satisfying $2\cdot 67-\lb+1=81$ and $2\cdot 148-\lb+1=243$.
\end{ex}

\begin{proof} We will prove the theorem for $S(A\cup \{3^k\})$ (the proof for $S(A\cup \{2\cdot 3^k\})$ is very similar).  Pick some $k$ and $\mA$ according to the theorem statement, let $A$ be defined from $\mA$ as in the theorem, and let $S$ be the sequence consisting of those nonnegative integers $x$ which satisfy the three desired conditions on ternary digits.  We must prove that $S=S(A\cup \{3^k\})$, for which we need (i) that $S$ is 3-free, and (ii) that $x>3^k$ is covered by $S$ if $x\not\in S$.

We first prove (i).  Suppose for the sake of contradiction that there exist $x,y,z\in S$ with $y,z<x$ such that $2y-z=x$.  Since the ternary digits $t_0$ through $t_{k-1}$ must be either 0 or 1 in $x,y,z$, we can conclude that these digits are all the same for $x,y,z$.  Now, if $t_k(x),t_k(y),t_k(z)$ are not to be identical, they must take on all values 0,1,2 in some order.  However, if $t_k$ is 0, the previous ternary digits must form an element of $A$, whereas if $t_k$ is 2, the previous ternary digits cannot form an element of $A$. Since we know that $t_i(x)=t_i(y)=t_i(z)$ for $0\le i\le k-1$, we conclude that $t_k$ is identical for $x,y,z$.  Now, since $t_i$ must be 0 or 1 for $i>k$, every such digit must also be identical for $x,y,z$, implying $x=y=z$, a contradiction. We conclude that $S$ must be 3-free.

We now prove (ii).  Suppose that $x>3^k$ with $x\not\in S$.  We construct $y,z\in S$ digit-wise so that $y,z<x$ and $x=2y-z$.  For each $i<k$, we set
\begin{itemize}
\item $t_i(y)=t_i(z)=0$ if $t_i(x)=0$.
\item $t_i(y)=t_i(z)=1$ if $t_i(x)=1$.
\item $t_i(y)=1$ and $t_i(z)=0$ if $t_i(x)=2$.
\end{itemize}

Before assigning the remaining digits $t_i(y)$ and $t_i(z)$, we define the numbers $y_0$ and $z_0$ to be the ternary subwords of $y$ and $z$, respectively, formed by considering only digits 0 through $k-1$.  We note that the nonzero digits of $z_0$ are a subset of those of $y_0$.  Hence, if $y_0$ is in $A$ then $z_0$ is also, since $\mA$ is monotone decreasing.

This observation made, we now proceed to define the remaining digits.

\begin{case} $t_k(x)\ne 0$.
\end{case}

We begin by assigning $t_i(y)$ and $t_i(z)$ for $i>k$ following the same rules as for $i<k$.  Next, we define $t_k(y)$ and $t_k(z)$, as follows.

If $t_k(x)=1$, we set $t_k(y)=t_k(z)=1$.  By the definition of $S$, the $y$ and $z$ thus constructed will be in $S$, showing $x$ is covered by $S$.  If $t_k(x)=2$ and $z_0\in A$, then we set $t_k(z)=0$ and $t_k(y)=1$.  Again $y,z\in S$, so $x$ is covered by $S$.  On the other hand, if $t_k(x)=0$ and $z_0\not\in A$, then we may conclude $y_0\not\in A$.  We here set $t_k(y)=t_k(z)=2$, and conclude again that $y,z\in S$.

\begin{case} $t_k(x)=0$ and $y_0\in A$.
\end{case}

We begin by assigning $t_i(y)$ and $t_i(z)$ for $i>k$ following the same rules as for $i<k$.  Next, we set $t_k(y)=t_k(z)=0$.  Since $y_0$ is in $A$, $z_0$ must be as well, so $y,z\in S$, as desired.

\begin{case} $t_k(x)=0$ and $y_0\not\in A$
\end{case}

We begin by assigning $t_i(y)$ and $t_i(z)$ for $i>k$ following the same rules as for $i<k$, except with $t_i(x)$ replaced by $t_i(x-3^{k+1})$ throughout.  (Since $x>3^k$ and $t_k(x)=0$, we know that $x-3^{k+1}$ is a nonnegative integer.)  Next, we set $t_k(y)=2$ and $t_k(z)=1$.  It is simple to verify that $y,z\in S$.

We conclude that in all cases $y,z\in S$ and hence all $x\not\in S$ satisfying $x>3^k$ are covered by $S$.  Hence, $S=S(A\cup \{3^k\})$, as desired.  That $S(A\cup \{3^k\})$ is independent follows routinely from the definition of $S$.
\end{proof}

\begin{rem} The characters of $S(A\cup \{3^k\})$ and $S(A\cup \{2\cdot 3^k\})$ can easily be shown to equal $2\cdot 3^k$ and $4\cdot 3^k$, respectively, provided that $\mathcal{A}$ does not contain \emph{all} subsets of $\{0,1,\ldots,k-1\}$.   If $\mathcal{A}$ does contain all subsets of $\{0,1,\ldots,k-1\}$, then the sequence $S(A\cup \{2\cdot 3^k\})$ has character $2\cdot 3^k$, whereas $S(A\cup \{3^k\})$ is simply $S(0)$ and has character $0$ for any $k$.
\end{rem}

We can also develop the preceding theorem in a different way.  Given an independent sequence $S(A)=\{a_n\}$, define the \emph{$k$-reversal} $R_k(A)$ of $S(A)$ as follows: For $x=a_{2^k-1}$, set $$R_k(A)=S(x-a_{2^k-1},x-a_{2^k-2},\ldots,x-a_1,x-a_0).$$Note that this nucleating set is indeed 3-free and starts with 0.  We say that an independent sequence is \emph{reversible} if for every adequate $k$, the $k$-reversal of the sequence is independent.

\begin{prop} As defined in the preceding theorem, the sets $A\cup \{3^k\}$ and $A\cup \{2\cdot 3^k\}$ are reversible.
\end{prop}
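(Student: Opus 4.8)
The plan is to compute the nucleating set of the $j$-reversal explicitly in ternary, for each adequate scale $j$, and to recognize it as a sufficiently long prefix of an independent Stanley sequence; since any long enough prefix of a Stanley sequence is again a nucleating set for it, this identifies the reversal and shows it is independent.

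First I read off from the closed form the data needed about $S(A\cup\{3^k\})$: a digit count shows it has exactly $2^j$ elements below $3^j$ for every $j\ge k+1$, so $a_{2^j}=3^j$, every adequate scale $j$ satisfies $j\ge k+1$, and the largest block element is $a_{2^j-1}=\sum_{i=k+1}^{j-1}3^i+2\cdot 3^k+\tfrac{3^k-1}{2}$, the largest integer below $3^j$ meeting the digit conditions (using that the all-ones word below $3^k$ lies outside $A$; if $\mathcal A=2^{\{0,\dots,k-1\}}$ then $S(A\cup\{3^k\})=S(0)$, whose blocks complement into themselves, so that case is immediate). Now fix an adequate $j$ and compute $a_{2^j-1}-a_i$ digit-wise over the block: there are no borrows, and the effect is to complement the $0$--$1$ digits in every position $\ne k$ and to send $t_k\mapsto 2-t_k$. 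Writing $\mathcal A^{c}:=\{T\subseteq\{0,\dots,k-1\}:\{0,\dots,k-1\}\setminus T\notin\mathcal A\}$ and letting $A^{c}$ be built from $\mathcal A^{c}$ as $A$ is from $\mathcal A$, the single identity $w\in A\iff\bar w\notin A^{c}$ for $0$--$1$ words $w$ (with $\bar w$ the $0$--$1$ complement) shows this map carries the digit conditions defining $S(A\cup\{3^k\})\cap[0,3^j)$ onto those defining $S(A^{c}\cup\{3^k\})\cap[0,3^j)$. The key observation is that $\mathcal A^{c}$ is again monotone decreasing: if $T'\subseteq T$ and $T^{c}\notin\mathcal A$, then $T'^{c}\supseteq T^{c}$, and a down-set omitting $T^{c}$ omits all its supersets, so $T'^{c}\notin\mathcal A$. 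Since $S(A^{c}\cup\{3^k\})\cap[0,3^j)$ has $2^j$ elements and is precisely the length-$2^j$ prefix of $S(A^{c}\cup\{3^k\})$, the reversal's nucleating set equals that prefix; hence $R_j(A\cup\{3^k\})=S(A^{c}\cup\{3^k\})$, which is independent by Theorem~\ref{thm:monotone}.

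For $S(A\cup\{2\cdot 3^k\})$ the argument has the same shape. Here every adequate scale satisfies $j\ge k+2$, with $a_{2^j}=3^j$ and $a_{2^j-1}=\sum_{i=k+2}^{j-1}3^i+2\cdot 3^{k+1}+\tfrac{3^k-1}{2}$, and the digit-wise subtraction is identical except that a block element with $t_k=2$ produces one borrow out of position $k$ into position $k+1$; since $t_{k+1}=2$ forces $t_k=0$ in this sequence, the borrow never propagates further, and one computes that the reversal's nucleating set is the length-$2^j$ prefix of the sequence
$$\widehat S:=\bigl\{x\ge 0: t_i(x)\in\{0,1\}\ (i\ne k+1);\ t_{k+1}(x)=t_k(x)=0\Rightarrow\textstyle\sum_{i<k}t_i(x)3^i\in A^{c};\ t_{k+1}(x)=2\Rightarrow t_k(x)=0\text{ and }\textstyle\sum_{i<k}t_i(x)3^i\notin A^{c}\bigr\}.$$
It remains to verify $\widehat S=S(\widehat A)$, $\widehat A$ the indicated prefix, which goes exactly as in the proof of Theorem~\ref{thm:monotone}: $3$-freeness falls out of the digit conditions by the usual position-by-position argument, and every $x\notin\widehat S$ above the prefix is covered by a digit-wise choice of $y,z\in\widehat S$, organized into cases on $t_{k+1}(x)$ and $t_k(x)$, invoking the monotone-decreasingness of $\mathcal A^{c}$ exactly where the original proof invokes that of $\mathcal A$ (to pass from $y_0\in A^{c}$ to $z_0\in A^{c}$ when the support of $z_0$ sits inside that of $y_0$). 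The degenerate case $\mathcal A=2^{\{0,\dots,k-1\}}$, where $S(A\cup\{2\cdot 3^k\})$ instead has block scale $k+1$, is self-reversing and handled directly. Hence $R_j(A\cup\{2\cdot 3^k\})$ is independent for every adequate $j$, and both families are reversible.

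The step I expect to be the real work is the last one: tracking the borrow and then re-running the covering half of the proof of Theorem~\ref{thm:monotone} for $\widehat S$, a second and essentially parallel pass through that proof's case bookkeeping. Everything else is a careful ternary computation; the one genuinely new ingredient is the complementation $\mathcal A\leftrightarrow\mathcal A^{c}$ and the remark that it preserves the monotone-decreasing property.
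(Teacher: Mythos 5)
The paper omits the proof entirely (``We omit the proof of this result, since it is routine and not especially instructive''), so there is no author's argument to compare against. Evaluated on its own terms, your approach is correct and well-designed. For $S(A\cup\{3^k\})$ the argument is clean and complete: the digit count giving $a_{2^j}=3^j$, the observation that $a_{2^j-1}$ subtracts without borrows, the verification that $w\in A\iff \bar w\notin A^c$ so the subtraction map carries the digit conditions for $S(A\cup\{3^k\})$ onto those for $S(A^c\cup\{3^k\})$, and the check that $\mathcal A^c$ is again a down-set all hold; the reversal's nucleating set is then a long prefix of $S(A^c\cup\{3^k\})$, which is independent by Theorem~\ref{thm:monotone}. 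The complementation $\mathcal A\leftrightarrow\mathcal A^c$ is exactly the right new ingredient.

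For $S(A\cup\{2\cdot 3^k\})$, your description of $\widehat S$ (after tracking the single borrow out of position $k$) is correct, and the cardinality count confirms the subtraction map is a bijection onto $\widehat S\cap[0,3^j)$. But I would push back a little on the claim that verifying $\widehat S=S(\widehat A)$ ``goes exactly as in the proof of Theorem~\ref{thm:monotone}.'' Your $\widehat S$ is genuinely not of the form $S(\widehat{\mathcal A}\cup\{3^{k+1}\})$: the family on $\{0,\dots,k\}$ that it encodes ($\{T:k\in T\}\cup\{T: k\notin T,\ T\in\mathcal A^c\}$) fails to be a down-set in general, so Theorem~\ref{thm:monotone} cannot be invoked as a black box, and the covering proof has to be redone with two linked special positions $k$ and $k+1$ rather than one. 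The cases are not a one-for-one relabeling of the three cases in Theorem~\ref{thm:monotone}'s proof: for example, when $t_{k+1}(x)=0$, $t_k(x)=2$, and $z_0\notin A^c$, the standard assignment produces a $z$ with $t_{k+1}(z)=t_k(z)=0$ that violates the $A^c$-condition, and the repair requires simultaneously forcing a borrow at position $k$, putting a $2$ at position $k+1$ of $y$, and feeding the resulting carry into position $k+2$ via the $x-3^{k+2}$ shift (using $x\ge 3^{k+2}$, which itself needs justification from $x>a_{2^j-1}$). I checked that the needed assignments do exist in every case, so the conclusion stands, but the case bookkeeping is more than a parallel pass; it is new work. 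You rightly identify this as where the real content lies, and your sketch is good enough to be completed, but the phrase ``exactly as in the proof'' undersells the amount of careful carry-tracking required.

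Two small additional points worth making explicit if you write this up: (i) the prefix $\widehat S\cap[0,3^j)$ is a nucleating set for $\widehat S$ only because it contains the elements $\widehat A$ that generate it, which you should record; (ii) you should also say a word about why $\widehat S$, once shown to be a Stanley sequence, is \emph{independent} (its digit description is translation-invariant above position $k+1$, giving $a'_{2^m+i}=3^m+a'_i$ and a constant character $\lambda=4\cdot 3^k$, matching the remark after Theorem~\ref{thm:monotone}).
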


We omit the proof of this result, since it is routine and not especially instructive.

Theorem \ref{thm:product} offers a more interesting way of generating new regular Stanley sequences from existing ones.

\begin{thm:product} Let $S(A)=\{a_n\}$ be independent and $S(B)=\{b_n\}$ be regular.  Let $k$ be adequate with respect to $A$.  Let $A^{\ast}=\{a_0,a_1,\ldots,a_{2^k-1}\}$ and define $$A\otimes_k B=\{a_{2^k}b+a\mid  a\in A^{\ast},b\in B\}.$$Then, $S(A\otimes_k B)$ is a regular Stanley sequence, independent if and only if $B$ is, having description $$S(A\otimes_k B)=\{a_{2^k}b+a\mid  a\in A^{\ast},b\in S(B)\},$$with character $\lb(A\otimes_k B)=a_{2^k}\cdot \lb(B)+\lb(A)$ and shift index $\sigma(A\otimes_k B)=2^k\cdot \sigma(B)$.
\end{thm:product}

\begin{ex}
\label{ex:product}
Take $A=\{0\}$ and $B=\{0,2,5\}$.  Then, $a_2=3$ and \begin{eqnarray*}A\otimes_2 B&=&\{3\cdot b+a\mid a\in \{0,1\},\, b\in \{0,2,5\}\\ &=&\{0,1,6,7,15,16\}.\end{eqnarray*}Then, the sequence \begin{eqnarray*}S(0,1,6,7,15,16)&=&0, 1, 6, 7, 15, 16, 18, 19, \underbrace{\fbox{$27, 28, 33, 34, 42, 43, 45, 46,$}}_{\Gm_3}\\ &&\underbrace{\fbox{$81, 82, 87,
88, 96, 97, 99, 100, 108, 109, 114, 115, 123, 124, 126, 127$}}_{\Gm_4},\\ && 243,\ldots\end{eqnarray*}is independent with character $3\lb(B)+\lb(A)=3\cdot 4+0=12$.
\end{ex}

\begin{proof} Let $S=\{a_{2^k}b+a\mid  a\in A^{\ast}, b\in S(B)\}$ be the proposed form of the sequence $S(A\otimes_k B)$, and let $x_0$ be the largest element of $A\otimes_k B$.  It suffices to show (i) that $S$ is 3-free, and (ii) that every integer $x>x_0$ not in $S$ is covered by $S$.

We first prove that $S$ is 3-free.  Suppose for the sake of contradiction that $x,y,z\in S$ exist with $y,z<x$ and $2y-z=x$.  Since no three distinct elements of $A^{\ast}$ form an arithmetic progression modulo $a_{2^k}$, we conclude that $x,y,z$ must all be identical modulo $a_{2^k}$ to some common $a_i$.  But then the elements $(x-a_i)/a_{2^k},(y-a_i)/a_{2^k},(z-a_i)/a_{2^k}$ of $S(B)$ must form an arithmetic progression - a contradiction.  Hence, $S$ must be 3-free.

Now suppose that $x>x_0$ is not in $S$.  We must show it is covered by $S$.  Let $m,r$ be such that $x=m\cdot a_{2^k}+r$.  There are two possibilities:

\setcounter{case}{0}
\begin{case} $r$ is in $A^{\ast}$ or is covered by it.
\end{case}
Pick $a_i,a_j\in A^{\ast}$ such that $2a_i-a_j=r$.  Since $x>x_0$, $m$ must either be in $S(B)$ or else be covered by it.  Picking $b_g,b_h$ such that $2b_g-b_h=m$, we see that $$x=2\left(a_{2^k}b_g+a_i\right)-\left(a_{2^k}b_h+a_j\right).$$

\begin{case} $r\in O(A)$.
\end{case}
In this case, there exist $a_i,a_j\in A^{\ast}$ such that $2a_i-a_j=a_{2^k}+r$.  Then, either $m-1$ is in $S(B)$ or covered by it.  Picking $b_g,b_h$ such that $2b_g-b_h=m-1$, we see that $$x=2\left(a_{2^k}b_g+a_i\right)-\left(a_{2^k}b_h+a_j\right).$$

We conclude that, in both possible cases, $x$ is covered by $S$ and hence that $S$ is indeed $S(A\otimes_k B)$.  That $S(A\otimes_k B)$ is regular, with character and shift index as stated, follows routinely from the explicit description of $S$.
\end{proof} 

\begin{rem} Theorem \ref{thm:product} proves that a great number of integers are attainable as characters of regular Stanley sequences.  For example,\begin{eqnarray*}\lb(\{0\}\otimes_1 A)&=&3\lb(A)\\ \lb(\{0,2\}\otimes_1 A)&=&3\lb(A)+2\end{eqnarray*}It appears possible that similar reasoning could show the attainability of all character values above a certain constant; more research in this area is called for.
\end{rem}

We will refer to the operation $A\otimes_k B$ just described as the \emph{$k$-product} of $A$ and $B$.  We note that $k$-multiplication of independent sequences is associative; this follows immediately from our closed-form description of the terms of $A\otimes_k B$.

Theorem \ref{thm:product} allows for the construction of many new regular Stanley sequences, such as those described in Theorem \ref{thm:digits}.

\begin{thm:digits} Let $k$ be a positive integer. Let $T_1,T_2$ be disjoint subsets of $\{0,1,\ldots,k\}$ such that no $t\in T_1$ satisfies $t-1\in T_2$.  Let $$A=\left\{\left(3^{a_1}+3^{a_2}+\cdots+3^{a_m}\right)+2\left(3^{b_1}+3^{b_2}+\cdots+3^{b_n}\right)\mid \{a_1,\ldots,a_m\}\subseteq T_1,\{b_1,\ldots,b_n\}\subseteq T_2\right\}.$$Then, $S(A)$ is an independent Stanley sequence.
\end{thm:digits}

\begin{ex}
\label{ex:digits}
Let $k=3$, let $T_1=\{0,3\}$ and $T_2=\{1\}$.  Then, \begin{eqnarray*}A&=&\{0,1,20,21,1000,1001,1020,1021\}\hskip .1 in \text{in base 3}\\ &=&\{0,1,6,7,27,28,33,34\}\hskip .1 in \text{in base 10}.\end{eqnarray*}

Moreover, the sequence \begin{eqnarray*}S(A)&=&0,1,20,21,1000,1001,1020,1021,1100,1101,1120,1121,2100,2101,2120,2121,\\ && 10000,10001,10020,10021,11000,11001,11020,11021,11100,11101,11120,11121,\\ && 12100,12101,12120,12121,\\ &&100000,\ldots\hskip .1 in \text{in base 3}\\ &=&0, 1, 6, 7, 27, 28, 33, 34, 36, 37, 42, 43, 63, 64, 69, 70, \\ &&\underbrace{\fbox{$81, 82, 87,
88, 108, 109, 114, 115, 117, 118, 123, 124, 144, 145, 150, 151,$}}_{\Gm_4}\\ &&243,\ldots \hskip .1 in \text{in base 10}\end{eqnarray*}is independent with character $\lb=60$.
\end{ex}

Before proving Theorem \ref{thm:digits}, we shall present two weaker versions of this theorem.  Their proofs follow routine case analysis and are omitted.

\begin{lem} Let $k$ be a positive integer, and $j$ an integer such that $0\le j\le k$.  Set $$B_1=\{3^{b_1}+3^{b_2}+\cdots+3^{b_m}\mid j\le b_1<b_2<\cdots <b_m\le k\}.$$Then, $S(B_1)$ is independent, with $k+1$ adequate.  Specifically, $S(B_1)$ consists of all integers $x\ge 0$ satisfying
\begin{itemize}
\item $t_i(x)$ equals 0 or 1 for all $i$ not in the interval $[j,k]$.
\item If $t_j(x),t_{j+1}(x),\ldots,t_k(x)$ are all $0$ or $1$, but are not all 1, then $t_i=0$ for all $i<j$.
\item If $t_j(x),t_{j+1}(x),\ldots,t_k(x)$ are not all either $0$ or $1$, then (i) not all $t_i(x)$ are 0 for $i<j$, and (ii) $t_i(x)$ equals 1 or 2 for $j\le i\le k$.
\end{itemize}
\label{lem1}
\end{lem}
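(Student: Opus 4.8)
The plan is to follow the template of the proof of Theorem~\ref{thm:monotone}. Write $N=3^{j}+3^{j+1}+\cdots+3^{k}$ for the largest element of $B_{1}$, and let $S$ be the set of nonnegative integers satisfying the three stated digit conditions. For $x\in S$ it is useful to split the ternary expansion of $x$ into its \emph{high part} (digits in positions $>k$), its \emph{middle block} (digits in positions $j,\ldots,k$), and its \emph{low part} (digits in positions $<j$); the conditions then read: the high and low parts use only the digits $0,1$; the middle block is a string over $\{0,1\}$ or a string over $\{1,2\}$; and the low part must equal $0$ exactly when the middle block is a string over $\{0,1\}$ other than the all-ones string. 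As a warm-up one checks that $B_{1}\subseteq S$ and that every element of $S$ not exceeding $N$ lies in $B_{1}$ (a nonzero low digit, or a digit $2$ inside $[j,k]$, forces the value past $N$), so that $S\cap[0,N]=B_{1}$. By the characterization of Stanley sequences in \S2 it then suffices to show that $S$ is 3-free and that every integer $x>N$ with $x\notin S$ is covered by $S$; these give $S=S(B_{1})$.

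For 3-freeness, suppose $x=2y-z$ with $x,y,z\in S$ distinct and $y,z<x$, and let $i_{0}$ be the least position at which $x,y,z$ do not all agree. Reading the base-3 identity $x+z=2y$ from the least significant digit upward, the agreement of the three numbers below $i_{0}$ makes the net carry out of the lower positions vanish, so $t_{i_{0}}(x)+t_{i_{0}}(z)\equiv 2\,t_{i_{0}}(y)\pmod 3$. If $i_{0}\notin[j,k]$ then all three digits lie in $\{0,1\}$ and the congruence forces them equal, a contradiction. If $i_{0}\in[j,k]$ then $x,y,z$ have identical low parts, so either all three low parts vanish---in which case all three middle blocks are strings over $\{0,1\}$ and we are back in the previous case---or all three are nonzero---in which case all three middle blocks are strings over $\{1,2\}$ (an all-ones block is such a string), so $t_{i_{0}}(x),t_{i_{0}}(y),t_{i_{0}}(z)\in\{1,2\}$, and one checks directly that no nonconstant triple in $\{1,2\}^{3}$ satisfies the congruence. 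In every case we reach a contradiction, so $S$ is 3-free.

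For the covering statement---the crux of the argument---let $x>N$ with $x\notin S$; we build $y,z\in S$ with $y,z<x$ and $x=2y-z$ by prescribing their ternary digits. Outside $[j,k]$ we use the recipe from Theorem~\ref{thm:monotone}: copy $x$'s digit where it is $0$ or $1$, and use the digit pair $(1,0)$ where it is $2$; this forces the high and low parts of $y$ and $z$ to use only the digits $0,1$, as membership in $S$ demands. The real work is inside $[j,k]$: we case on whether the middle block of $x$ is a string over $\{0,1\}$, a string over $\{1,2\}$, or contains both a $0$ and a $2$, and further on whether the low part of $x$ vanishes, and in each case we select the middle-block digits of $y$ and $z$---adjusting their low parts, and, when the middle-block arithmetic produces or absorbs a carry out of position $k$, replacing $x$ by $x\pm 3^{k+1}$ before applying the high-digit recipe (legitimate because those cases force $x\ge 3^{k+1}$)---so that $2y-z=x$ and both $y$ and $z$ satisfy the three conditions. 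Checking in each case that $y$ and $z$ actually lie in $S$, and in particular that the forced relation between the middle block and the low part holds for the blocks we have chosen, is the lengthy but routine heart of the proof; this is the step I expect to demand the most care.

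Finally, with $S=S(B_{1})$ in hand, the structural claims follow from the digit description. For $m\ge k+1$, adding $3^{m}$ to an element of $S$ below $3^{m}$ again produces an element of $S$ and changes neither the middle block nor the low part, while no element of $S$ lies in $[2\cdot 3^{m},3^{m+1})$; hence the elements of $S$ in $[3^{m},2\cdot 3^{m})$ are exactly $3^{m}$ added to those in $[0,3^{m})$. Since $S$ has exactly $2^{k+1}$ elements below $3^{k+1}$ (a count read off from the digit conditions), it has $2^{m}$ below $3^{m}$ for all $m\ge k+1$, whence $a_{2^{m}}=3^{m}$ and $a_{2^{m}+i}=a_{2^{m}}+a_{i}$ for $0\le i<2^{m}$; so $S(B_{1})$ is independent. (A short computation comparing $3^{m}=a_{2^{m}}$ with $2a_{2^{m}-1}-\lb+1$ identifies the character as $0$ when $j=0$, where $S(B_{1})=S(0)$, and as $3^{k+1}-3^{j}$ when $j\ge1$, the largest element of $S$ below $3^{k+1}$ then being $2N+\tfrac{3^{j}-1}{2}$.) Since the minimal nucleating set of $S(B_{1})$ is a prefix of $B_{1}$ and so lies in $[0,N]$, while $a_{2^{k+1}}=3^{k+1}>N$, the integer $k+1$ is adequate.
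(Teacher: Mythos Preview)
The paper omits the proof of this lemma entirely, stating only that it ``follows routine case analysis.'' Your proposal supplies exactly the argument the paper has in mind: it follows the template of the proof of Theorem~\ref{thm:monotone}, showing first that the described set $S$ is 3-free and then that every $x>N$ with $x\notin S$ is covered by $S$, after which the independence and adequacy claims are read off from the digit description. Your 3-freeness argument (via the least disagreeing digit and the congruence $t_{i_0}(x)+t_{i_0}(z)\equiv 2t_{i_0}(y)\pmod 3$, split according to whether the common low part is zero) is clean and correct, and your count of $2^{k+1}$ elements below $3^{k+1}$, the identification $a_{2^m}=3^m$ for $m\ge k+1$, and the character computation are all right.

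The only place to be careful is the covering construction, which you correctly flag as the crux. The subtlety is that the standard digit recipe on the low part can force $z$'s low part to vanish even when $z$'s middle block must lie over $\{1,2\}$ (for instance, when $x$'s low part has only $0$s and $2$s while its middle block lies over $\{0,1\}$ and is not all ones); in such cases one must reroute a carry between the middle and high parts, as you indicate. Working a small example such as $j=k=1$, $x=11=(102)_3$, where $y=7=(21)_3$ and $z=3=(10)_3$ do the job via a carry out of the middle block, confirms that your outlined mechanism is the right one. With that caveat noted, your proposal is correct and is precisely the ``routine case analysis'' the paper defers.
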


\begin{lem} Let $k$ be a positive integer, and $j$ an integer such that $0\le j\le k$.  Set $$B_2=\{2\left(3^{b_1}+3^{b_2}+\ldots+3^{b_m}\right)\mid j\le b_1<b_2<\cdots <b_m\le k\}.$$  Then, $S(B_2)$ is independent, with $k+2$ adequate.

To describe the elements of $B_2$ in closed form, we first define a function $\zeta$ on the set of nonnegative integers $x$ such that $t_i(x)$ equals 0 or 2 for all $j\le i\le k$.  Set $\zeta(x)$ equal to
\begin{itemize}
\item $x$ itself, if $t_i(x)=0$ for all $i<j$
\item otherwise, the integer obtained from $x$ be switching to 1 all digits $t_i(x)=2$ such that $j\le i\le k$ and at least one of $t_j(x),t_{j+1}(x),\ldots,t_{i-1}(x)$ is zero.
\end{itemize}

Then, $S(B_2)$ consists of all integers $\zeta(x)$ such that $x$ satisfies

\begin{itemize}
\item $t_i(x)$ equals 0 or 2 for all $i$ in the interval $[j,k+1]$.
\item $t_i(x)$ equals 0 or 1 for all $i$ not in the interval $[j,k+1]$.
\item If $t_j(x),t_{j+1}(x),\ldots,t_k(x)$ are all $0$ or $2$, but are not all 2, then $t_i=0$ for all $i<j$.
\item If $t_{k+1}=0$, then $t_j(x),t_{j+1}(x),\ldots,t_k(x)$ are all $0$ or $2$.
\item If $t_{k+1}=2$, then $x'\not\in S$, where $x'$ is obtained from $x$ by switching $t_{k+1}$ to $0$.
\end{itemize}
\label{lem2}
\end{lem}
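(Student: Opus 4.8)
The plan is to follow the template of the proof of Theorem~\ref{thm:monotone}: exhibit the set $S$ cut out by the five bulleted conditions, show directly that $S$ is $3$-free and that every sufficiently large integer not in $S$ is covered by $S$, and then read independence, the adequacy of $k+2$, and the character off the resulting closed form. The first thing I would check is that $\zeta$ is a well-defined injection on its stated domain, so that ``$S$'' is an unambiguous set of integers: $\zeta$ alters only the digits in positions $j,\dots,k$, and there only ever replaces a $2$ by a $1$, so the untouched digits together with the positions of the surviving $2$'s recover $x$. Granting this, the characterization of a Stanley sequence by its covering behavior reduces the claim $S=S(B_2)$ to two statements: (i) $S$ is $3$-free, and (ii) every integer $x>\max B_2=3^{k+1}-3^j$ with $x\notin S$ is covered by $S$.

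For (ii) I would imitate the case-by-case construction of Theorem~\ref{thm:monotone}. For each position $i$ outside the band $[j,k+1]$ set $t_i(y)=t_i(z)=t_i(x)$ when $t_i(x)\in\{0,1\}$ and $t_i(y)=1,\ t_i(z)=0$ when $t_i(x)=2$, so that $2t_i(y)-t_i(z)=t_i(x)$ position-by-position there. The digits in $[j,k+1]$ are then supplied by a handful of cases keyed to the value of $t_{k+1}(x)$ (the ``$2$-detector''), to whether $t_j(x),\dots,t_k(x)$ are all $0$ or $2$, and to whether the block of digits below position $j$ vanishes; these parallel the ``copy'', ``borrow'', and intermediate cases of Theorem~\ref{thm:monotone}, the borrow case here being one where position $k+1$ is consumed and the high digits are instead taken from a suitable downward shift of $x$, exactly as $x-3^{k+1}$ is used there. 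In each branch one checks $y,z<x$ and, after applying $\zeta$, that $y,z\in S$.

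For (i), suppose $x,y,z\in S$ with $y,z<x$ and $2y-z=x$. The argument is a ternary carry analysis in the style of Theorem~\ref{thm:monotone}: one shows the digits of $x,y,z$ above position $k+1$ coincide (they are all $0$ or $1$), whence the digit in position $k+1$ (which is $0$ or $2$) coincides as well, and then descends through the band $[j,k]$ and the block below $j$, invoking the coupling built into the conditions --- the clause stating that if $t_j(x),\dots,t_k(x)$ are all $0$ or $2$ but not all $2$ then the digits below position $j$ vanish, together with its partial converse --- to force $x=y=z$, a contradiction. I expect this to be the main obstacle. The conditions describe $\zeta(x)$ rather than $x$, so one must keep track of which of $x,y,z$ falls in each branch of the definition of $\zeta$; and inside the band $[j,k]$ the digits may equal $2$, so both $2y$ and $x+z$ can carry, and those carries have to be excluded by hand rather than dismissed. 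Granting (i) and (ii), the explicit description of $S$ makes equations~(\ref{an}) and~(\ref{another}) transparent for all $k'\ge k+2$: the terms past $a_{2^{k+2}-1}$ are $B_2$ shifted by $a_{2^{k+2}}$ followed by the matching translate of the earlier terms. Since $|B_2|=2^{k-j+1}<2^{k+2}$, the term $a_{2^{k+2}}$ lies outside the minimal nucleating set, so $k+2$ is adequate, and the character equals $2a_{2^{k+2}-1}-a_{2^{k+2}}+1$, computed directly from the description.
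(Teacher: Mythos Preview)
The paper does not actually prove this lemma: it states that the proofs of Lemmas~\ref{lem1} and~\ref{lem2} ``follow routine case analysis and are omitted.'' Your plan is precisely that routine case analysis, carried out along the same template the paper uses for Theorem~\ref{thm:monotone} (show the candidate set $S$ is $3$-free, show every large integer outside $S$ is covered by $S$, then read off independence), so your approach is exactly what the paper intends.
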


\begin{proof}[Proof of Theorem \ref{thm:digits}]
We observe that $A$ can be expressed as the product $$A_1\otimes_{k_1}A_2\otimes_{k_2}\cdots \otimes_{k_{m-1}}A_m,$$ where each $A_i$ is either of the form $B_1$ (see Lemma \ref{lem1}) or of the form $B_2$ (see Lemma \ref{lem2}), with each $k_i$ a corresponding adequate integer to $A_i$.  Then, applying Theorem \ref{thm:product} finishes the proof.
\end{proof}

\section{Constructing dependent sequences}
\label{sec:dep}

In this section we will demonstrate two methods for constructing dependent sequences from existing regular sequences.

\subsection{Shifted Stanley sequences}

Given the regular sequence $S(A)=\{a_n\}$ and the nonnegative integers $k$ and $c$, let the \emph{$k$-shifted Stanley sequence} $S_k(c,A)$ be the Stanley sequence generated by the set $$A_k(c,A)=\left\{a_i\mid 0\le i<2^k-\sigma(A)\right\} \cup \left\{c+a_i\mid 2^k-\sigma(A)\le i<2^{k+1}-\sigma(A)\right\},$$assuming that this set is 3-free (if it is not, then $S_k(c,A)$ is not defined).

For regular sequences $S(A)$, we will use the notation $O'(A)$ to denote the omitted set of $S'(A)$. Recall Theorem \ref{thm:shift}:

\begin{thm:shift} Let $S(A)=\{a_n\}$ be an independent sequence with character $\lb$.  Let $\ell$ be the minimum adequate integer for $S(A)$, and pick $k\ge \ell$.  Let $c$ be such that \begin{equation}\lb\le c\le a_{2^k-2^\ell}-\lb.\label{bounds}\end{equation}Then, $S_k(c,A)$ is defined and is a regular Stanley sequence with core $S(A)$.
\end{thm:shift}

We also conjecture the following stronger statement.

\begin{conj}
Let $S(A)=\{a_n\}$ be a regular sequence with core $\{a'_n\}$, shift index $\sigma$, and character $\lb$.  Let $k$ be an adequate integer such that for all $0\le i<2^{k-1}$,\begin{eqnarray*}a_{2^{k-1}-\s+i}&=&a_{2^{k-1}-\s}+a'_i.\end{eqnarray*}Let $\ell$ be the minimal adequate integer for $\{a'_n\}$.  Let $c$ be such that \begin{equation}\lb\le c\le a_{2^k-2^\ell-\sigma}+a'_{2^k}-a_{2^k-\sigma}-\lb.\end{equation}Then, $S_k(c,A)$ is defined and is a regular Stanley sequence with core $S(A)$.
\label{conjshifts}
\end{conj}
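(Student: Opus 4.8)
The plan is to adapt the proof of Theorem~\ref{thm:shift} to the setting of an arbitrary regular core, with the bound on $c$ adjusted to account for the shift index. First I would set $\{a'_n\} = S'(A)$, $\s = \s(A)$, $\lb = \lb(A)$, and write $A_k(c,A)$ explicitly: the first $2^k - \s$ terms are $a_0, \ldots, a_{2^k - \s - 1}$, and these are followed by $c + a_i$ for $2^k - \s \le i < 2^{k+1} - \s$, which by the regularity hypothesis equal $c + a_{2^k - \s} + a'_{i - (2^k - \s)}$. So the shifted ``tail'' is a translate, by $c + a_{2^k - \s}$, of the block $\{a'_0, a'_1, \ldots, a'_{2^k - 1}\}$. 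The first task is to verify that $A_k(c,A)$ is $3$-free: this reduces to checking that no element of the shifted tail is covered by the first part together with the tail, and that the first part covers nothing inside the tail. Here the lower bound $c \ge \lb$ is used (to push the tail far enough past $a_{2^k - \s - 1}$ that the gap cannot be an arithmetic-progression midpoint), exactly as in the independent case, and the key input is Lemma~\ref{omlb} applied to the core, which controls $\om'(A) := \om$ of $\{a'_n\}$ by $\lb$.

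Next I would compute, exactly as in the proposition of \S2 that upgrades ``for all large $k$'' to ``for a single $k$,'' the set of integers covered by $A_k(c,A)$ above its largest element. The structure is: the first part $\{a_0, \ldots, a_{2^k - \s - 1}\}$ behaves like a prefix of the regular sequence $S(A)$, so its cover is known; the tail is a translate of the core block $\{a'_0, \ldots, a'_{2^k - 1}\}$, whose cover is known from independence of $\{a'_n\}$; and the joint cover is handled by the Cover-shift Lemma (Lemma~\ref{covershift}), splitting into the three cases $y$ greater than, equal to, or less than the ``expected'' partner of $x$ in the translate. The upper bound on $c$ is precisely what guarantees that the tail is not pushed so far that it fails to be the genuine continuation of the greedy sequence --- i.e., that no integer strictly between $a_{2^k - \s - 1}$ (or the relevant boundary) and the first tail element $c + a_{2^k - \s}$ is left uncovered, forcing the greedy algorithm to insert something before $c + a_{2^k - \s}$. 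Computing this uncovered-threshold gives exactly the quantity $a_{2^k - 2^\ell - \s} + a'_{2^k} - a_{2^k - \s} - \lb$: the term $a_{2^k - 2^\ell - \s}$ is the largest prefix element available to cover the gap (with $\ell$ chosen so that $a_{2^\ell}'$ is not forced into the core's nucleating set), and $a'_{2^k} - a_{2^k - \s} = a'_{2^k} + c - (c + a_{2^k - \s})$ rewrites the gap in terms of the shifted tail's base point.

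Once the single-$k$ cover computation is done, the argument of the \S2 proposition applies verbatim: the computed cover forces $a_{2^{k+1} - \s} = 2a_{2^{k+1}-\s-1} - \lb + 1$ with the same $\lb$, and forces the next block to be the translate $\{a'_0, \ldots, a'_{2^{k+1}-1}\} + a_{2^{k+1}-\s}$, so equations (\ref{one}) and (\ref{two}) propagate to all larger indices. This shows $S_k(c,A)$ is regular with core $\{a'_n\} = S'(A)$, shift index $\s$, and character $\lb$, and it is dependent whenever $c > 0$ (or more precisely whenever the translation actually moved something), since the prefix $a_0, \ldots, a_{2^k - \s - 1}$ is not a prefix of its own continuation.

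The main obstacle I expect is the $3$-freeness check and the cover computation in the regime where the core itself has nonzero shift index $\s \ne 0$: when $S(A)$ is merely regular rather than independent, the ``first part'' $\{a_0, \ldots, a_{2^k - \s - 1}\}$ is not simply a scaled copy of an earlier block, so one cannot directly invoke the clean self-similar decomposition $\Gm = \Lb + a_{2^k}$ used in \S2. Instead one must use the regularity equations to express this prefix via the core, keep careful track of the offset $\s$ throughout the Cover-shift bookkeeping, and in particular verify that the hypothesis ``$a_{2^{k-1}-\s+i} = a_{2^{k-1}-\s} + a'_i$ for $0 \le i < 2^{k-1}$'' (one step below adequacy) is genuinely what is needed so that the portion of the prefix lying in $[2^{k-1}-\s, 2^k-\s)$ is already a translate of a core block. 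Managing these index shifts correctly, and confirming that the stated upper bound on $c$ is both necessary and sufficient for the greedy continuation to land exactly on $c + a_{2^k - \s}$, is where the real work lies; the rest is a careful but routine rerun of the \S2 machinery.
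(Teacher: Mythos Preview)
The statement you are attempting to prove is Conjecture~\ref{conjshifts}, not a theorem: the paper explicitly introduces it with ``We also conjecture the following stronger statement'' and gives no proof. There is therefore no proof in the paper to compare your proposal against; only the weaker Theorem~\ref{thm:shift} (for independent $S(A)$) is established.

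As to the viability of your outline: you follow the natural strategy of adapting the proof of Theorem~\ref{thm:shift}, and you correctly locate the obstruction. The engine of that proof is Lemmas~\ref{shiftlem} and~\ref{shiftlemtwo}, whose telescoping over $\ell\le m\le k-1$ requires the full self-similar block decomposition of an \emph{independent} sequence down to level $\ell$. For merely regular $S(A)$, the prefix $\{a_0,\ldots,a_{2^k-\s-1}\}$ has this structure only above the adequate threshold; the extra hypothesis at level $k-1$ buys you exactly one more layer (the upper half of the prefix becomes a core translate), but the bottom $2^{k-1}-\s$ terms need not decompose further. Your proposal calls the remaining Cover-shift bookkeeping ``careful but routine,'' yet this is precisely where a new idea is needed: you must control the joint cover of an \emph{unstructured} initial segment against a translate of a core block, and neither Lemma~\ref{shiftlem} nor the argument in \S2 supplies that. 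The fact that the authors left this as a conjecture is strong evidence that the gap does not close by routine index-tracking; nothing in your write-up indicates how you would close it either.

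One further point: the conjecture as printed concludes ``with core $S(A)$,'' but cores are by definition independent while $S(A)$ here is only regular. Your proposal silently reads this as ``core $S'(A)$,'' which is the sensible interpretation, but you should flag the discrepancy rather than absorb it.
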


\begin{ex}
\label{ex:shift}
Let $\{a_n\}=S(0)$, and $k=2$.  Then, $\ell=0$ and $\{a'_n\}=\{a_n\}$, because $S(0)$ is independent.  Theorem \ref{thm:shift} implies that $S_2(c,\{0\})$ is defined for all $c$ such that $$0\le c\le a_3+a'_4-a_4-\lb=4+9-9-0=4.$$Picking $c=3$, we compute $S_2(3,\{0\})$ by taking the highlighted block of $S(0)$, adding 3 to the block, then recomputing the subsequent terms of the sequence.\begin{eqnarray*}S(0)&=&0,1,\underbrace{\fbox{$3,4,$}}_{\Gm_1}\underbrace{\fbox{${\bf 9,10,12,13,}$}}_{\Gm_2}\underbrace{\fbox{$27,28,30,31,36,37,39,40,$}}_{\Gm_3}\\ && \underbrace{\fbox{$81,82, 84, 85, 90, 91, 93, 94, 108, 109, 111, 112, 117, 118, 120, 121,$}}_{\Gm_4}\ldots.\end{eqnarray*}

This results in the sequence \begin{eqnarray*}&&S(0,1,3,4,12,13,15,16)\\ &=&0, 1, 3, 4, \underbrace{\fbox{$12, 13, 15, 16,$}}_{\Gm_2}\underbrace{\fbox{$33, 34, 36, 37, 42, 43, 45, 46,$}}_{\Gm_3}\\ &&\underbrace{\fbox{$93, 94, 96, 97, 102, 103, 105, 106, 120, 121, 123, 124, 129, 130, 132, 133,$}}_{\Gm_4}\ldots.\end{eqnarray*}

Note this sequence is dependent, with core $S(0)$.  In fact, it is possible to construct $S_2(3,\{0\})$ from $S(0)$ as follows: Add 3 to the block $\{9,10,12,13\}$, then add $2\cdot 3$ to the next block, add $2^2\cdot 3$ to the next block, etc.

We now repeat the shifting process on the block $\{33, 34, 36, 37, 42, 43, 45, 46\}$ of the sequence $S_2(3,\{0\})$.  According to Conjecture \ref{conjshifts}, we are able to pick any $c$ such that $$\lb\le c\le 2\cdot 13-16=10,$$where 16 is the preceding element of the sequence $S_2(3,\{0\})$ and $13$ is the corresponding element of the core sequence $S(0)$.  Picking $c=10$, we have \begin{eqnarray*}&&S(0, 1, 3, 4, 12, 13, 15, 16, 43, 44, 46, 47, 52, 53, 55, 56)\\ &=&0, 1, 3, 4, 12, 13, 15, 16, \underbrace{\fbox{$43, 44, 46, 47, 52, 53, 55, 56,$}}_{\Gm_3}\\ &&\underbrace{\fbox{$113, 114, 116, 117, 122, 123, 125, 126, 140, 141, 143, 144, 149, 150, 152, 153,$}}_{\Gm_4}\ldots.\end{eqnarray*}Note that this sequence is dependent, with core $S(0)$.
\end{ex}

To prove Theorem \ref{thm:shift}, we begin with the following lemma.

\begin{lem} Let $S(A)=\{a_n\}$ be independent.  Pick $m$ adequate, and set $\Lb_m=\{a_i\mid 0\le i<2^m\}$.  Let nonnegative integers $d,e$ be such that $a_{2^m-1}+d\le e$ (so that $\Lb_m+d$ and $\Lb_m+e$ occupy disjoint intervals).  Then, $\Lb_m+d$ and $\Lb_m+e$ jointly cover$$\left([2e-d,a_{2^m}+2e-d)\backslash \left(O(A)+(2e-d)\right)\right)\cup \left(O(A)+(a_{2^m}+2e-d)\right).$$
\label{shiftlem}
\end{lem}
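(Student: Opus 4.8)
The plan is to deduce the statement from the Cover-shift Lemma (Lemma~\ref{covershift}) together with the description of the integers covered by a prefix $\Lb_m$ that is worked out inside the proof of the proposition asserting that the defining equations of an independent sequence need only be verified for a single large~$k$. Write $N=2e-d$. Since $m$ is adequate we have $m\ge 1$, so $a_{2^m-1}\ge a_1\ge 1$, and the hypothesis $a_{2^m-1}+d\le e$ forces $d<e$; in particular $d\le e$, so Lemma~\ref{covershift} applies with $n_1=d$ and $n_2=e$. Two containments are then immediate. First, for each $a_j\in\Lb_m$ the pair $s=a_j+d\in\Lb_m+d$, $t=a_j+e\in\Lb_m+e$ satisfies $s<t$ and $2t-s=a_j+N$, so $\Lb_m+N$ is jointly covered by $\Lb_m+d$ and $\Lb_m+e$. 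Second, Lemma~\ref{covershift} with $S=T=\Lb_m$, $n_1=d$, $n_2=e$ (so $2n_2-n_1=N$) shows that if $x$ is covered by $\Lb_m$, then $x+N$ is jointly covered by $\Lb_m+d$ and $\Lb_m+e$. Consequently it is enough to prove that
$$\bigl([0,a_{2^m})\setminus O(A)\bigr)\ \cup\ \bigl(O(A)+a_{2^m}\bigr)\ \subseteq\ \Lb_m\cup\{x:x\text{ is covered by }\Lb_m\},$$
since adding $N$ to the left-hand side reproduces exactly the set named in the lemma --- here using $O(A)\subseteq[0,a_{2^m})$, which holds because $\om(A)$ is smaller than the largest element of the minimal nucleating set and, $m$ being adequate, that element is at most $a_{2^m-1}$.

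For the part $[0,a_{2^m})\setminus O(A)$, take $x\in[0,a_{2^m})$ with $x\notin O(A)$. By the definition of $O(A)$ either $x\in S(A)$, in which case $x\in\{a_0,\ldots,a_{2^m-1}\}=\Lb_m$, or $x$ is covered by $S(A)$, say $x=2t-s$ with $s<t$ in $S(A)$; then $x=t+(t-s)>t$, so $t<x<a_{2^m}$, whence $s<t<a_{2^m}$ and both $s,t$ lie in $\Lb_m$, so $x$ is covered by $\Lb_m$. In either case $x$ lies in the set on the right.

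For the part $O(A)+a_{2^m}$ I would show that every $o+a_{2^m}$ with $o\in O(A)$ is covered by $\Lb_m$. Here $o\le\om(A)<\lb(A)$ by Lemma~\ref{omlb}, so $0\le\lb(A)-1-o\le\lb(A)-1$, and the identity $a_{2^m}=2a_{2^m-1}-\lb(A)+1$ rewrites $o+a_{2^m}=2a_{2^m-1}-\bigl(\lb(A)-1-o\bigr)$; thus the claim comes down to showing that $\lb(A)-1-o$ is a sequence term lying below $a_{2^m-1}$ for every $o\in O(A)$, which is precisely the assertion ``$O(A)+a_{2^m}$ is covered by $\Lb_m$'' established inside the proof of the proposition cited above. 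This is the one genuinely delicate point --- everything else is mechanical bookkeeping with Lemma~\ref{covershift} --- because one must use the structure of an independent sequence to rule out that some $o+a_{2^m}$ gets covered only with the help of sequence terms $\ge a_{2^m}$; I expect the main obstacle to be making this argument go through for every adequate~$m$ rather than merely for all sufficiently large~$m$.
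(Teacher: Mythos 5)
Your reduction to the two Cover-shift applications is sound, and your handling of the portion $[0,a_{2^m})\setminus O(A)$ is correct. But the crux --- that $\Lb_m$ \emph{by itself} covers $O(A)+a_{2^m}$ for \emph{every adequate} $m$ --- is exactly where the argument has a real gap, and you are right to flag it. The proof of the proposition you cite only establishes that $\Lb$ covers $O(A)+a_{2^k}$ under the extra hypothesis $a_{2^k-1}\ge \lb+\om$, which is \emph{not} implied by adequacy: it can fail for small adequate $m$ (e.g.\ $S(0,1,7)$ has $\lb=7$, $\om=6$, and $m=2$ is adequate with $a_3=8<13$). Your proposed mechanism --- that $\lb(A)-1-o$ must be a sequence term below $a_{2^m-1}$ --- is also incorrect as stated: in $S(0,1,7)$ with $o=3\in O(A)$ one has $\lb-1-o=3\notin S(A)$, yet $o+a_4=13=2\cdot 7-1$ is still covered by $\Lb_2$; the covering pair need not involve $a_{2^m-1}$.

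The paper sidesteps this issue by a different route. Rather than trying to show $\Lb_m$ alone covers $O(A)+a_{2^m}$, it picks an auxiliary large $n$ and shows directly that for $s\in O(A)+a_{2^n}$, any covering pair $(x,y)\subset S(A)$ must satisfy $x\le a_{2^m-1}$ and $y\ge a_{2^n-2^m}$ (using $a_{2^m-1}\ge\lb$, which \emph{does} follow from adequacy). Hence $\Lb_m$ and $\Lb_m+a_{2^n-2^m}$ jointly cover $O(A)+a_{2^n}$, and the Cover-shift Lemma translates this to the joint covering by $\Lb_m+d$ and $\Lb_m+e$. This large-$n$ argument in fact implies your claim as a corollary (set $y_0=y-a_{2^n-2^m}\in\Lb_m$ and check $2y_0-x=o+a_{2^m}$), but that deduction needs to be made; one cannot simply point to the earlier proposition, whose proof uses a stronger hypothesis than adequacy supplies.
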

\begin{proof} It is obvious that \begin{equation}\{2y-x\mid x\in \Lb_m+d, y\in \Lb_m+e, y=x+(e-d)\}=(\Lb_m+(2e-d))\label{lemone}.\end{equation}Furthermore, since $\Lb_m$ covers $$[0,a_{2^m})\backslash \left(O(A)\cup \Lb_m\right)$$we conclude that \begin{equation}\{2y-x\mid x\in \Lb_m+d, y\in \Lb_m+e, y>x+(e-d)\}\supseteq \left([0,a_{2^m})\backslash \left(O(A)\cup \Lb_m\right)\right)+(2e-d)\label{lemtwo}\end{equation}

Now, consider some large $n$.  It is evident that $O(A)+a_{2^n}$ is not in $S(A)$ and hence must be covered by it.  Pick some element $s\in O(A)+a_{2^n}$ and suppose $2y-x=s$ with $x,y\in S(A)$ and $y>x$.  Since $s\in O(A)+a_{2^n}$, $x<a_{2^n}$.  Then, since $n$ is large, $y<a_{2^n}$.  Because $m$ is adequate, $2a_{2^m-1}-\lb+1=a_{2^m}>a_{2^m-1}$, so $a_{2^m-1}\ge \lb$.  Then, we conclude that \begin{equation}s\ge a_{2^n}=2a_{2^n-1}-\lb+1>2a_{2^n-1}-a_{2^m-1}.\label{sineq}\end{equation}Since $x,y\le a_{2^n-1}$ and $2y-x=s$, it is simple to conclude from (\ref{sineq}) that $x\le a_{2^m-1}$ and $y\ge a_{2^n-1}-a_{2^m-1}=a_{2^n-2^m}$, implying that $x\in \Lb_m$ and $y\in \Lb_m+a_{2^n-2^m}$.  This implies that $\Lb_m$ and $\Lb_m+a_{2^n-2^m}$ jointly cover $O(A)+a_{2^n}$.  Applying the Cover-shift Lemma tells us that $\Lb_m+d$ and $\Lb_m+e$ must jointly cover \begin{eqnarray*}O(A)+a_{2^n}+2(e-a_{2^n-2^m})-d&=& O(A)+a_{2^n}-2(a_{2^n-1}-a_{2^m-1})+(2e-d)\\ &=& O(A)+a_{2^n}-(a_{2^n}+\lb-1)+(a_{2^m}+\lb-1)+(2e-d)\\ &=&O(A)+a_{2^m}+2e-d.\end{eqnarray*}Combining this result with (\ref{lemone}) and (\ref{lemtwo}), we conclude that $\Lb_m+d$ and $\Lb_m+e$ must jointly cover$$\left([2e-d,a_{2^m}+2e-d)\backslash \left(O(A)+(2e-d)\right)\right)\cup \left(O(A)+(a_{2^m}+2e-d)\right),$$as desired.
\end{proof}

We now use Lemma \ref{shiftlem} to prove a stronger version of itself.

\begin{lem}
Let $S(A)=\{a_n\}$ be independent with character $\lb$, and let $\ell$ be the minimal adequate integer for $S(A)$.  Pick $k\ge \ell$ and set $\Lb_k=\{a_i\mid 0\le i<2^k\}$.  Let nonnegative integers $d,e$ be such that $a_{2^k-1}+d\le e$ (so that $\Lb_k+d$ and $\Lb_k+e$ occupy disjoint intervals).  Then, $\Lb_k+d$ and $\Lb_k+e$ jointly cover$$[2e-d-a_{2^k-2^\ell}+\lb,a_{2^k}+2e-d)\cup \left(O(A)+(a_{2^k}+2e-d)\right).$$
\label{shiftlemtwo}
\end{lem}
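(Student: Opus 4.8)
The plan is to induct on $k\ge\ell$, with Lemma \ref{shiftlem} as the engine and the inductive hypothesis used to patch the places where Lemma \ref{shiftlem} is too weak. For the base case $k=\ell$ we have $a_{2^k-2^\ell}=a_0=0$, so the assertion is precisely that $\Lb_\ell+d$ and $\Lb_\ell+e$ jointly cover $[2e-d+\lb,\ a_{2^\ell}+2e-d)\cup\big(O(A)+(a_{2^\ell}+2e-d)\big)$. I would deduce this directly from Lemma \ref{shiftlem} (applied with $m=\ell$, which is permissible since $\ell$ is adequate) together with Lemma \ref{omlb}: the former gives joint coverage of $\big([2e-d,\ a_{2^\ell}+2e-d)\setminus(O(A)+(2e-d))\big)\cup\big(O(A)+(a_{2^\ell}+2e-d)\big)$, and since $O(A)\subseteq[0,\lb)$ by the latter, the deleted set $O(A)+(2e-d)$ lies inside $[2e-d,\ 2e-d+\lb)$, so what remains already contains the interval $[2e-d+\lb,\ a_{2^\ell}+2e-d)$.

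For the inductive step, fix $k\ge\ell$, assume the statement for $k$, and take nonnegative integers $d,e$ with $a_{2^{k+1}-1}+d\le e$. Write $P=a_{2^k}$, $Q=a_{2^k-2^\ell}$, and $c=2e-d$. Adequacy of $k$ supplies the structural facts I will use: $\Lb_{k+1}=\Lb_k\cup(\Lb_k+P)$; applying (\ref{an}) to the index $2^{k+1}-2^\ell=2^k+(2^k-2^\ell)$ gives $a_{2^{k+1}-2^\ell}=P+Q$; combining (\ref{an}) and (\ref{another}) as in Proposition \ref{alpha} gives $a_{2^{k+1}}=3P$; and $\lb<P$, so that $O(A)\subseteq[0,\lb)\subseteq[0,P)$. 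The goal is that $\Lb_{k+1}+d$ and $\Lb_{k+1}+e$ jointly cover $[c-P-Q+\lb,\ 3P+c)\cup\big(O(A)+(3P+c)\big)$. Since $\Lb_{k+1}+d=(\Lb_k+d)\cup(\Lb_k+P+d)$ and likewise with $e$, it suffices to show that the four ``quarter-pairs'' $(\Lb_k+g,\ \Lb_k+f)$ with $(g,f)$ ranging over $(P+d,e),\ (d,e),\ (P+d,P+e),\ (d,P+e)$ jointly cover this set between them; the hypothesis $a_{2^{k+1}-1}+d\le e$ guarantees in each case the disjointness needed to invoke Lemma \ref{shiftlem} or the inductive hypothesis, with the relevant quantity $2f-g$ equal to $c-P,\ c,\ P+c,\ 2P+c$ respectively.

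The decisive step is to feed the three pairs with $2f-g\in\{c,\ P+c,\ 2P+c\}$ to Lemma \ref{shiftlem}, which shows each jointly covers an interval --- $[c,P+c)$, $[P+c,2P+c)$, $[2P+c,3P+c)$ respectively --- with a ``hole'' $O(A)+c$, $O(A)+(P+c)$, $O(A)+(2P+c)$ deleted near its bottom and a ``bump'' $O(A)+(P+c)$, $O(A)+(2P+c)$, $O(A)+(3P+c)$ adjoined at its top; and to feed the remaining pair $(\Lb_k+P+d,\ \Lb_k+e)$, for which $2f-g=c-P$, to the \emph{inductive hypothesis}, which shows it jointly covers the \emph{solid} interval $[c-P-Q+\lb,\ c)$ together with the bump $O(A)+c$. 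Taking the union, the bump $O(A)+c$ from the last pair plugs the hole of the interval $[c,P+c)$; the bump $O(A)+(P+c)$ plugs the hole of $[P+c,2P+c)$; the bump $O(A)+(2P+c)$ plugs the hole of $[2P+c,3P+c)$ (each of these using $O(A)\subseteq[0,P)$); and the solid interval $[c-P-Q+\lb,\ c)$ supplies everything below $c$ --- in particular the one hole $O(A)+(c-P)$ that Lemma \ref{shiftlem} alone would leave unfilled. The union thus collapses to exactly $[c-P-Q+\lb,\ 3P+c)\cup\big(O(A)+(3P+c)\big)$, which, since $P+Q=a_{2^{k+1}-2^\ell}$ and $3P=a_{2^{k+1}}$, is the statement for $k+1$.

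The hard part, and what I would most want to get right, is exactly this telescoping. It is tempting to run the induction with only Lemma \ref{shiftlem}, or with only the inductive hypothesis, but neither alone succeeds once $a_{2^k-2^\ell}<\lb$: one cannot strengthen Lemma \ref{shiftlem} so that a single block $\Lb_k$ covers all of $[0,a_{2^k})$, because the omitted set genuinely evades coverage by an initial block, so both ingredients are needed and must be combined as above --- the top ``$O(A)+\cdot$'' bump of each stacked pair landing precisely on the bottom hole of the pair above it, while one distinguished pair is handled by the inductive hypothesis so as to reach below $c$ and simultaneously plug the lowest hole. I would also check the corner case $a_{2^{k+1}-1}+d=e$, in which one quarter-pair loses its extreme element with $s=t$; a short calculation shows the integer so lost equals $c-a_{2^{k+1}-1}$, which lies strictly below $c-P-Q+\lb$, hence outside the target, so the argument is unaffected.
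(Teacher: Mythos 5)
Your proof is correct, but it organizes the argument differently than the paper. The paper proves the lemma by a \emph{direct telescoping} rather than an induction: for each $m$ with $\ell\le m\le k-1$ it pairs the top $2^m$ elements of $\Lb_k+d$ (namely $\Lb_m+a_{2^k-2^m}+d$) against the bottom $2^m$ elements of $\Lb_k+e$ (namely $\Lb_m+e$), applies Lemma \ref{shiftlem}, and uses the identity $a_{2^k-2^m}-a_{2^m}=a_{2^k-2^{m+1}}$ to show that the resulting intervals $[2e-a_{2^k-2^m}-d,\,2e-a_{2^k-2^{m+1}}-d)$ stack end-to-end, each one's top bump $O(A)+(2e-a_{2^k-2^{m+1}}-d)$ plugging the bottom hole of the next. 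One more application of Lemma \ref{shiftlem} with the full pair $(\Lb_k+d,\Lb_k+e)$ extends the stack up to $a_{2^k}+2e-d$, and Lemma \ref{omlb} is then invoked once to absorb the single unplugged bottom hole by raising the lower endpoint by $\lb$. Your inductive version instead fixes $m=k$ for all applications within a step and splits $\Lb_{k+1}$ into $\Lb_k\cup(\Lb_k+a_{2^k})$, giving a four-way cross decomposition; three pairs go to Lemma \ref{shiftlem}, while the fourth --- the lowest, whose hole nothing else could plug --- is handed to the inductive hypothesis, which reaches further down. Both arguments hinge on the same family of identities (your $a_{2^{k+1}-2^\ell}=a_{2^k}+a_{2^k-2^\ell}$ is the telescoped form of the paper's $a_{2^k-2^m}-a_{2^m}=a_{2^k-2^{m+1}}$) and on the same plug-the-hole mechanism, and your disjointness checks all hold. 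The paper's route is more economical, using $k-\ell+1$ invocations of Lemma \ref{shiftlem} where the unwound induction uses roughly $3(k-\ell)$; on the other hand, your four-pair decomposition is structurally very close to the paper's proof of Theorem \ref{thm:shift} itself (which splits into $\Lb^1_j,\Lb^2_j,\Gm^1_j,\Gm^2_j$ and handles one pair by its inductive hypothesis), so yours has the virtue of unifying the two arguments under one pattern.
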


\begin{proof}
Let $m<k$ be an adequate integer for $\{a_m\}$, and let $\Lb_m=\{a_i\mid 0\le i<2^m\}$.  By Lemma \ref{shiftlem}, \begin{eqnarray}&&\{2(a_j+e)-(a_i+d)\mid 2^k-2^m\le i\le 2^k-1,  0\le j\le 2^m-1\}\nonumber\\ &=&\{2y-x\mid x\in \Lb_m+a_{2^k-2^m}+d, y\in \Lb_m+e\}\nonumber \\ &\supseteq &\left([2e-a_{2^k-2^m}-d,a_{2^m}+2e-a_{2^k-2^m}-d)\backslash \left(O(A)+(2e-a_{2^k-2^m}-d)\right)\right)\nonumber \\ &&\cup \left(O(A)+(a_{2^m}+2e-a_{2^k-2^m}-d)\right)\nonumber \\ &=&\left([2e-a_{2^k-2^m}-d,2e-a_{2^k-2^{m+1}}-d)\backslash \left(O(A)+(2e-a_{2^k-2^m}-d)\right)\right)\label{telescope}\\ &&\cup \left(O(A)+(2e-a_{2^k-2^{m+1}}-d)\right)\nonumber,\end{eqnarray}where the last step follows from \begin{equation}a_{2^k-2^m}-a_{2^m}=a_{2^k-1}-a_{2^m-1}-a_{2^m}=a_{2^k-1}-a_{2^{m+1}-1}=a_{2^k-2^{m+1}}.\label{oncemore}\end{equation}

Hence, the expression given on the right side of (\ref{telescope}) is jointly covered by $\Lb_k+d$ and $\Lb_k+e$.  We take the union of this expression over all possible $m$ ($\ell\le m\le k-1$) and observe that it ``telescopes," becoming the expression \begin{equation}\left([2e-a_{2^k-2^\ell}-d,2e-d)\backslash \left(O(A)+(2e-a_{2^k-2^\ell}-d)\right)\right)\cup \left(O(A)+2e-d\right),\label{combinedone}\end{equation}which must in turn be jointly covered by $\Lb_k+d$ and $\Lb_k+e$.

Again applying Lemma \ref{shiftlem}, we see that $\Lb_k+d$ and $\Lb_k+e$ must also jointly cover \begin{eqnarray}&&\{2(a_j+e)-(a_i+d)\mid 0\le i,j\le 2^k-1\}\nonumber\\ &=&\{2y-x\mid x\in \Lb_k+d, y\in \Lb_k+e\}\nonumber \\ &\supseteq &\left([2e-d,a_{2^k}+2e-d)\backslash \left(O(A)+2e-d)\right)\right)\cup \left(O(A)+(a_{2^k}+2e-d)\right).\label{combinedtwo}\end{eqnarray}

Combining (\ref{combinedone}) and (\ref{combinedtwo}), we see that $\Lb_k+d$ and $\Lb_k+e$ must jointly cover \begin{eqnarray}&&\left([2e-a_{2^k-2^\ell}-d,a_{2^k}+2e-d)\backslash \left(O(A)+(2e-a_{2^k-2^\ell}-d)\right)\right)\cup \left(O(A)+(a_{2^k}+2e-d)\right)\nonumber \\ &\subseteq & [2e-a_{2^k-2^\ell}-d+\lb,a_{2^k}+2e)\cup \left(O(A)+(a_{2^k}+2e)\right),\end{eqnarray}where the last step follows from Lemma \ref{omlb}, since every element of $O(A)$ is at most $\lb$.  This finishes the proof of the lemma.
\end{proof}

\begin{proof}[Proof of Theorem \ref{thm:shift}]
Let $c$ be such that (\ref{bounds}) is satisfied. Let $S_k(c,A)=\{\at_i\}$ be our shifted Stanley sequence.  For each $i$, let \begin{eqnarray*}\Lb_j&=&\{a_i\mid\, 0\le i<2^j\}\\ \Xt_j&=&\{\at_i\mid\, 0\le i<2^j\}\\ \Gm_i &=&\{a_i\mid\, 2^j\le i<2^{j+1}\}\\ \Gmt_i &=&\{\at_i\mid\, 2^j\le i<2^{j+1}\}.\end{eqnarray*}  We claim that $\Gmt_j=\Gm_j+2^{j-k}\cdot c$ for each $j\ge k$.  By construction, we know already that $\Gmt_k$ is of this form.  In our proof, we consider the block $\Gmt_{k+1}$, and then we use induction to prove the result for all $j\ge k+1$.

First, we prove that $S_k(c,A)$ is defined by proving that its nucleating set $A^\ast=\Lb_k\cup (\Gm_k+c)$ is 3-free.  Observe that all elements of $\Gm_k+c$ are at least $$a_{2^k}+c\ge a_{2^k}+\lb\ge 2a_{2^k-1}+1$$and therefore cannot be covered by $\Lb_k$.  Hence, $A^\ast$ is indeed 3-free.

We now consider what is covered by $\Gm_k+c$ alone.  Since $\Lb_k$ covers the set $[a_{2^k-1}+1,a_{2^k})\cup \left(O(A)+a_{2^k}\right)$, we conclude that $\Gm_k+c=\Lb_k+a_{2^k}+c$ must cover the set \begin{equation}[a_{2^k-1}+a_{2^k}+c+1,2a_{2^k}+c)\cup \left(O(A)+2a_{2^k}+c\right)=[a_{2^{k+1}-1}+c+1,2a_{2^k}+c)\cup \left(O(A)+2a_{2^k}+c\right).\label{covers}\end{equation}

We now apply Lemma \ref{shiftlemtwo} to $\Lb_k$ and $\Gm_k+c=\Lb_k+a_{2^k}+c$.  This implies that $\Lb_k$ and $\Gm_k+c$ must jointly cover \begin{equation}[2a_{2^k}+2c-a_{2^k-2^\ell}+\lb,3a_{2^k}+2c)\cup \left(O(A)+(3a_{2^k}+2c)\right).\label{combined}\end{equation}Observe that \begin{eqnarray*}2a_{2^k}+2c-a_{2^k-2^\ell}+\lb&\le &2a_{2^k}+c+\left(a_{2^k-2^\ell}-\lb\right)-a_{2^k-2^\ell}+\lb\\ &=& 2a_{2^k}+c.\end{eqnarray*}Now, we can combine (\ref{covers}) and (\ref{combined}) to conclude that $A^\ast$ covers $$[a_{2^{k+1}-1}+c+1,3a_{2^k}+2c)\cup \left(O(A)+(3a_{2^k}+2c)\right).$$

We can rewrite $$3a_{2^k}=2a_{2^k}+(2a_{2^k-1}-\lb+1)=2(a_{2^k}+a_{2^k-1})-\lb+1=2a_{2^{k+1}-1}-\lb+1=a_{2^{k+1}}.$$Hence, $A^\ast$ covers \begin{equation}[a_{2^{k+1}-1}+c+1,a_{2^{k+1}}+2c)\cup \left(O(A)+(a_{2^{k+1}}+2c)\right).\label{full}\end{equation}

Let $Q$ be the set of the integers that are at least $a_{2^{k+1}}+2c$ and are covered by $A^\ast$.  We claim that \begin{equation}Q=O(A)+(a_{2^{k+1}}+2c).\label{q}\end{equation}  From (\ref{full}), we know that $Q\supseteq O(A)+(a_{2^{k+1}}+2c)$.  We now prove the other direction: $\subseteq$.

Pick some $q\in Q$. Suppose for the sake of contradiction that $q$ is covered by $\Gm_k+c$.  Since the largest element of $\Gm_k+c$ is $a_{2^{k+1}-1}+c$ and the smallest is $a_{2^k}+c$, we know that $$2a_{2^{k+1}-1}-a_{2^k}+c\ge q\ge a_{2^{k+1}}+2c=2a_{2^{k+1}-1}-\lb+1+2c$$and therefore that $c\le \lb-1-a_{2^k}$, an impossibility since $a_{2^k}\ge \lb$.

We conclude that $q$ is not covered by $\Gm_k+c$ and therefore must be jointly covered by $\Lb_k$ and $\Gm_k+c$.  From the original Stanley sequence $S(A)$, we know that $\Lb_k$ and $\Gm_k$ jointly cover no integers greater than $a_{2^{k+1}}$, except for those in the set $O(A)+a_{2^{k+1}}$.  By the Cover-shift Lemma, $\Lb_k$ and $\Gm_k+c$ must jointly cover no integers greater than $a_{2^{k+1}}+2c$, except for those in the set $O(A)+a_{2^{k+1}}+2c$.  Therefore $Q\subseteq O(A)+(a_{2^{k+1}}+2c)$, proving the equation (\ref{q}).

Recall that we wished to prove that the block $\Gmt_{k+1}$ in $S_k(c,A)$ is equal to $\Gm_{k+1}+2c$.  This now follows immediately from (\ref{full}) and (\ref{q}).

Consider some $j\ge k+1$, and assume towards induction that we have $\Gmt_j=\Gm_j+2^{j-k}\cdot c$.  Assume further that $\Xt_{j-1}$ and $\Gmt_{j-1}$ jointly cover the set \begin{equation}[2\at_{2^{j-1}}-a_{2^{j-1}-2^\ell}+\lb,\at_{2^j})\cup \left(O(A)+\at_{2^j}\right).\label{hypothesis}\end{equation}The base case of $j=k+1$ follows from (\ref{combined}).

Now, let \begin{eqnarray*}\Lb^1_j&=&\{\at_i\mid\, 0\le i<2^{j-1}\}\\ \Lb^2_j&=&\{\at_i\mid\, 2^{j-1}\le i<2^j\}\\ \Gm^1_j&=&\{\at_i\mid\, 2^j\le i<2^j+2^{j-1}\}\\ \Gm^2_j&=&\{\at_i\mid\, 2^j+2^{j-1}\le i<2^{j+1}\},\end{eqnarray*}so that $\Xt_j=\Lb^1_j\cup \Lb^2_j$ and $\Gmt_j=\Gm^1_j\cup \Gm^2_j$.

Note that $\Lb^2_j=\Lb_{j-1}+\at_{2^{j-1}}$ and $\Gm^1_j=\Lb_{j-1}+\at_{2^j}$.  Then, applying Lemma \ref{shiftlemtwo}, we conclude that $\Lb^2_j$ and $\Gm^1_j$ jointly cover \begin{eqnarray}&&[2\at_{2^j}-\at_{2^{j-1}}-a_{2^{j-1}-2^\ell}+\lb,a_{2^{j-1}}+2\at_{2^j}-\at_{2^{j-1}})\nonumber\\ &&\cup\left(O(A)+(a_{2^{j-1}}+2\at_{2^j}-\at_{2^{j-1}})\right)\\ &\supseteq& [2\at_{2^j}-\at_{2^{j-1}}-a_{2^{j-1}-2^\ell}+\lb,a_{2^{j-1}}+2\at_{2^j}-\at_{2^{j-1}}).\label{lastone}\end{eqnarray}

Similarly $\Lb^2_j$ and $\Gm^2_j=\Gm^2_j+a_{2^{j-1}}$ must jointly cover \begin{equation} [2\at_{2^j}+2a_{2^{j-1}}-\at_{2^{j-1}}-a_{2^{j-1}-2^\ell}+\lb,3a_{2^{j-1}}+2\at_{2^j}-\at_{2^{j-1}}).\label{lastthree}\end{equation}

By our inductive hypothesis, we know that $\Xt_{j-1}$ and $\Gmt_{j-1}$ jointly cover (\ref{hypothesis}).  Since $\Lb^1_j=\Xt_{j-1}$ and $\Gm^1_j=\Gmt_{j-1}+\at_{2^j}-\at_{2^{j-1}}$, the Cover-shift Lemma implies that $\Lb^1_j$ and $\Gm^1_j$ jointly cover \begin{eqnarray}&&[2\at_{2^{j-1}}-a_{2^{j-1}-2^\ell}+\lb+2(\at_{2^j}-\at_{2^{j-1}}),\at_{2^j}+2(\at_{2^j}-\at_{2^{j-1}}))\nonumber\\ &&\cup \left(O(A)+(\at_{2^j}+2(\at_{2^j}-\at_{2^{j-1}}))\right)\nonumber \\ &\supseteq & [2\at_{2^j}-a_{2^{j-1}-2^\ell}+\lb,3\at_{2^j}-2\at_{2^{j-1}}).\label{lasttwo}\end{eqnarray}

Similarly, $\Lb^1_j=\Xt_{j-1}$ and $\Gm^2_j=\Gm^2_j+a_{2^{j-1}}$ must jointly cover \begin{equation} [2\at_{2^j}+2a_{2^{j-1}}-a_{2^{j-1}-2^\ell}+\lb,3\at_{2^j}+2a_{2^{j-1}}-2\at_{2^{j-1}})\cup \left(O(A)+(3\at_{2^j}+2a_{2^{j-1}}-2\at_{2^{j-1}})\right).\label{lastfour}\end{equation}

Combining (\ref{lastone}), (\ref{lasttwo}), (\ref{lastthree}), and (\ref{lastfour}), we conclude that $\Xt_j$ and $\Gmt_j$ must jointly cover \begin{eqnarray}&&[2\at_{2^j}-\at_{2^{j-1}}-a_{2^{j-1}-2^\ell}+\lb,a_{2^{j-1}}+2\at_{2^j}-\at_{2^{j-1}})\nonumber \\ &\cup& [2\at_{2^j}-a_{2^{j-1}-2^\ell}+\lb,3\at_{2^j}-2\at_{2^{j-1}})\nonumber \\ &\cup &[2\at_{2^j}+2a_{2^{j-1}}-\at_{2^{j-1}}-a_{2^{j-1}-2^\ell}+\lb,3a_{2^{j-1}}+2\at_{2^j}-\at_{2^{j-1}})\nonumber \\ &\cup &[2\at_{2^j}+2a_{2^{j-1}}-a_{2^{j-1}-2^\ell}+\lb,3\at_{2^j}+2a_{2^{j-1}}-2\at_{2^{j-1}})\nonumber \\ &\cup &\left(O(A)+(3\at_{2^j}+2a_{2^{j-1}}-2\at_{2^{j-1}})\right).\label{last}\end{eqnarray}

We note that \begin{eqnarray*}a_{2^{j-1}-2^\ell}-\lb &\ge &2^{j-1-k}(a_{2^k-2^\ell}-\lb)\\ &\ge &2^{j-1-k}c\\ &=&\at_{2^{j-1}}-a_{2^{j-1}}.\end{eqnarray*}Therefore, $$a_{2^{j-1}}+2\at_{2^j}-\at_{2^{j-1}}\ge 2\at_{2^j}-a_{2^{j-1}-2^\ell}+\lb$$and$$3a_{2^{j-1}}+2\at_{2^j}-\at_{2^{j-1}}\ge 2\at_{2^j}+2a_{2^{j-1}}-a_{2^{j-1}-2^\ell}+\lb.$$These two inequalities allow us to simplify (\ref{last}) to \begin{eqnarray}&&[2\at_{2^j}-\at_{2^{j-1}}-a_{2^{j-1}-2^\ell}+\lb,3\at_{2^j}-2\at_{2^{j-1}})\nonumber \\ &\cup &[2\at_{2^j}+2a_{2^{j-1}}-\at_{2^{j-1}}-a_{2^{j-1}-2^\ell}+\lb,3\at_{2^j}+2a_{2^{j-1}}-2\at_{2^{j-1}})\nonumber \\ &\cup &\left(O(A)+(3\at_{2^j}+2a_{2^{j-1}}-2\at_{2^{j-1}})\right).\label{lastagain}\end{eqnarray}

We observe that \begin{eqnarray*}\at_{2^j}-\at_{2^{j-1}}&=&\left(3a_{2^j}+2^{j-k}\cdot c\right)-\left(a_{2^{j-1}}+2^{j-1-k}\cdot c\right)\\ &\ge &2a_{2^{j-1}}\\ &\ge &2a_{2^{j-1}}-a_{2^{j-1}-2^\ell}+\lb.\end{eqnarray*}Therefore, $$3\at_{2^j}-2\at_{2^{j-1}}\ge 2\at_{2^j}+2a_{2^{j-1}}-\at_{2^{j-1}}-a_{2^{j-1}-2^\ell}+\lb.$$Then, we can simplify (\ref{lastagain}) to conclude that $\Xt_j$ and $\Gmt_j$ jointly cover $$[2\at_{2^j}-\at_{2^{j-1}}-a_{2^{j-1}-2^\ell}+\lb,3\at_{2^j}+2a_{2^{j-1}}-2\at_{2^{j-1}})\cup \left(O(A)+(3\at_{2^j}+2a_{2^{j-1}}-2\at_{2^{j-1}})\right).$$We see that \begin{eqnarray*}3\at_{2^j}+2a_{2^{j-1}}-2\at_{2^{j-1}}&=&3\left(a_{2^j}+2^{j-k}c\right)-2^{j-k}\cdot c\\ &=&a_{2^{j+1}}-2^{j+1-k}\cdot c,\end{eqnarray*}so $\Xt_j$ and $\Gmt_j$ jointly cover\begin{equation}[2\at_{2^j}-\at_{2^{j-1}}-a_{2^{j-1}-2^\ell}+\lb,a_{2^{j+1}}-2^{j+1-k}\cdot c)\cup \left(O(A)+(a_{2^{j+1}}-2^{j+1-k}\cdot c)\right).\label{almost}\end{equation}

Finally, we observe that $\Gmt_j=\Lb_j+\at_{2^j}$.  Since $\Lb_j$ covers $[a_{2^j-1}+1,a_{2^j})$, we conclude that $\Gmt_j$ must cover \begin{equation}[\at_{2^j}+a_{2^j-1}+1,\at_{2^j}+a_{2^j}).\label{short}\end{equation}Note that \begin{eqnarray*}\at_{2^{j-1}}+a_{2^{j-1}-2^\ell}-\lb&\ge &2^{j-k}(a_{2^k-2^\ell}-\lb)\\ &\ge &2^{j-k}c\\ &=&\at_{2^j}-a_{2^j}.\end{eqnarray*}This implies that$$\at_{2^j}+a_{2^j}\ge 2\at_{2^j}-\at_{2^{j-1}}-a_{2^{j-1}-2^\ell}+\lb,$$so we can combine (\ref{short}) with (\ref{almost}) to conclude that $\Xt_j\cup \Gmt_j$ must cover $$[\at_{2^j}+a_{2^j-1}+1,a_{2^{j+1}}-2^{j+1-k}\cdot c)\cup \left(O(A)+(a_{2^{j+1}}-2^{j+1-k}\cdot c)\right).$$Hence, $\Gmt_{j+1}=\Gamma_{j+1}+2^{j+1-k}\cdot c$, which, together with (\ref{almost}), completes the induction.

Thus, $\Gmt_j=\Gm_j+2^{j-k}\cdot c$ for each $j\ge k$.  This shows that $S_k(c,A)$ is a regular Stanley sequence, with core $S(A)$.
\end{proof}

\begin{cor} For each nonnegative integer $\lb$, there are either no regular Stanley sequences with character $\lb$, or else infinitely many.
\end{cor}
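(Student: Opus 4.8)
The plan is to reduce everything to Theorem \ref{thm:shift}. If no regular Stanley sequence has character $\lambda$, the first alternative holds and there is nothing to prove, so I would assume there is a regular Stanley sequence $S(B)$ with $\lambda(B)=\lambda$. The first step is to pass to an \emph{independent} sequence of the same character: by definition of regularity the core $S'(B)$ is independent and has character $\lambda$, so setting $S(A):=S'(B)$ we may assume we have an independent sequence $S(A)=\{a_n\}$ with $\lambda(A)=\lambda$. It then suffices to produce infinitely many pairwise distinct regular Stanley sequences of character $\lambda$, and the shifted sequences $S_k(c,A)$ are exactly the tool for this.

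Second, I would fix the single value $c^{\ast}:=\max(\lambda,1)$ and let $\ell$ be the minimal adequate integer for $S(A)$. Since $a_{2^k-2^\ell}\to\infty$, there is some $K\ge\ell$ such that for every $k\ge K$ one has $\lambda\le c^{\ast}\le a_{2^k-2^\ell}-\lambda$; that is, the hypothesis (\ref{bounds}) of Theorem \ref{thm:shift} is satisfied. That theorem then gives, for each $k\ge K$, that $S_k(c^{\ast},A)$ is defined and is a regular Stanley sequence with core $S(A)$, and in particular each $S_k(c^{\ast},A)$ has character $\lambda$ (the character of a regular sequence being, by definition, that of its core).

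Third — and this is the only step that needs any care — I would verify that the sequences $S_k(c^{\ast},A)$, $k\ge K$, are pairwise distinct. For this I would quote the identity established inside the proof of Theorem \ref{thm:shift}, namely that the $j$th block of $S_k(c^{\ast},A)$ equals $\Gamma_j+2^{j-k}c^{\ast}$ for all $j\ge k$, where $\Gamma_j$ denotes the $j$th block of $S(A)$. Taking least elements, the $2^j$th term of $S_k(c^{\ast},A)$ equals $a_{2^j}+2^{j-k}c^{\ast}$ whenever $j\ge k$. Now if $S_k(c^{\ast},A)=S_{k'}(c^{\ast},A)$ with $k\le k'$, then comparing $2^j$th terms for any $j\ge k'$ gives $2^{j-k}c^{\ast}=2^{j-k'}c^{\ast}$, and since $c^{\ast}\ge 1$ this forces $k=k'$. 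Hence the family is infinite, which is the second alternative.

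I do not expect a genuine obstacle here; the only point to watch is the degenerate choice $c=0$, for which $A_k(0,A)$ is merely a prefix of $S(A)$ so that $S_k(0,A)=S(A)$, reproducing the same sequence for every $k$ — this is precisely why I take $c^{\ast}=\max(\lambda,1)\ge 1$ rather than $c^{\ast}=\lambda$. One could alternatively fix a large $k$ and let $c$ range over the interval in (\ref{bounds}), but that yields only finitely many sequences for each fixed $k$, so fixing $c$ and letting $k$ grow is the cleaner route to ``infinitely many.''
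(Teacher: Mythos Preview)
Your proof is correct and follows the same route as the paper: pass to the core to obtain an independent sequence of character $\lambda$, then invoke Theorem \ref{thm:shift} to manufacture infinitely many regular sequences with that core (and hence that character). The paper's proof is a two-sentence sketch that simply asserts the preceding theorem yields infinitely many such sequences, whereas you have filled in the distinctness argument by fixing $c^{\ast}=\max(\lambda,1)$, letting $k$ vary, and comparing the block shifts $\Gamma_j+2^{j-k}c^{\ast}$ established in the proof of Theorem \ref{thm:shift}; this is more detail than the paper supplies, but the underlying idea is identical.
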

\begin{proof} If any regular sequence has character $\lb$, then its core must be an independent sequence $\{a_n\}$ with character $\lb$.  The preceding theorem shows that it is possible to construct infinitely many dependent Stanley sequences with $\{a_n\}$ as their core.
\end{proof}

\subsection{Deletions in Stanley sequences}

Finally, we consider the matter of \emph{deletions}.  Erd\H{o}s et al.~\cite{erdos} and Moy \cite{moy} appear to have assumed that Stanley sequences are \emph{maximal} 3-free sets; however, this is not true. For some dependent Stanley sequences, it is possible to remove one or more elements while preserving the Stanley sequence condition.  This claim is made clearer by the next example.

\begin{ex}
\label{ex:deletion}
We have already noted that the sequence$$S(0,1,4)=0,1,4,5,11,12,14,15,31,32,34,35,40,41,43,44,89,\ldots$$ is dependent, with core $S(0)=0,1,3,4,9,\ldots$.  Removing $11$ from $S(0,1,4)$ yields the sequence $$0,1,4,5,12,14,15,31,32,34,35,40,41,43,44,89,\ldots,$$ which may be expressed as $S(0,1,4,5,12,14,15,31)$.  Furthermore, it is evident that this Stanley sequence is dependent, with core $S(0)$ and shift index $\s=1$, since one element was removed.

Likewise, removing both $31$ and $32$ from $S(0,1,4)$ yields the dependent Stanley sequence $$S(0,1,4,5,11,12,14,15,34,35,40,41,43,44,89),$$which has core $S(0)$ and shift index $\s=2$, since two elements were removed.
\end{ex}

\begin{rem}
It follows from Theorem \ref{thm:product} that the shift index $\s$ can be arbitrarily large. If $A=\{0\}$, $B=\{0,1,4,5,12,14,15,31\}$, and $k$ is sufficiently large, the sequence $S(A\otimes_k B)$ is a dependent sequence satisfying $$\s(A\otimes_k B)=2^k\cdot \s(B)=2^k.$$
\end{rem}

Given a dependent Stanley sequence $S(A)$, we say that an element of $S(A)$ is \emph{deletable} if deleting it yields another (dependent) Stanley sequence.  We have as yet been unable to derive a general formulation for which elements of a Stanley sequence are deletable.

\begin{conj} Every dependent Stanley sequence contains infinitely many deletable elements.
\end{conj}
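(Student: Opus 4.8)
My plan is to recast deletability as a combinatorial condition on coverings and then to verify that condition along an infinite family of indices inside the later blocks of the sequence.

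\medskip
\emph{Step 1: a criterion for deletability.} Call an element $a_m$ of a Stanley sequence $S(A)=\{a_n\}$ \emph{inessential} if all but finitely many integers $x$ that lie outside $S(A)$ but are covered by $S(A)$ can be written $x=2t-s$ for some $s<t$ in $S(A)$ with $a_m\notin\{s,t\}$; otherwise call $a_m$ \emph{essential}. I would first show that \textbf{$a_m$ is deletable if and only if it is inessential}. If $a_m$ is essential and $S(B)=S(A)\setminus\{a_m\}$, then $B$ is a prefix of $S(A)\setminus\{a_m\}$ with $\max B>a_m$ (otherwise the greedy rule --- equivalently, appending the least uncovered integer --- would re-insert $a_m$), and then some $x>\max B$ with $x\notin S(A)$ is covered by $S(A)$ only through $a_m$; such an $x$ is neither in $S(A)\setminus\{a_m\}$ nor covered by it, a contradiction. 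Conversely, if the exceptional integers for $a_m$ are all $\le M$, choose $j$ with $a_{m+j}>M$, set $B=\{a_i: i\le m+j,\ i\ne m\}$ (a $3$-free prefix of $S(A)\setminus\{a_m\}$), and prove by induction on $n\ge m+j$ that the greedy continuation of $B$ reproduces $S(A)\setminus\{a_m\}$; the point is that a covering pair of an integer $y$ uses only elements below $y$, so for $y>\max B$ any covering pair of $y$ that avoids $a_m$ lies among the terms already generated, and since $y>M$ such a pair exists whenever $y\notin S(A)$.

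\medskip
\emph{Step 2: locating an inessential element in each large block.} The conjecture now says: every dependent Stanley sequence has infinitely many inessential elements; I would exhibit one per sufficiently large block. Let $S(A)=\{a_n\}$ be dependent with core $\{a'_n\}$ (independent), character $\lb$, and shift index $\s$; for all large $k$ the $k$-th block $\Gamma_k$ --- the terms with index in $[2^k-\s,\,2^{k+1}-\s-1]$ --- equals $a_{2^k-\s}+\Lambda'_k$ with $\Lambda'_k=\{a'_i:0\le i<2^k\}$, and $a_{2^k-\s}=2a_{2^k-\s-1}-\lb+1$. Fix a large $k$ and a candidate $a_m=a_{2^k-\s}+a'_j$. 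Integers covered by a pair whose larger member is $a_m$ all lie below $2a_m$, so (for this fixed $k$) inessentiality of $a_m$ comes down to: for all but finitely many $t\in S(A)$ with $t>a_m$, the integer $x=2t-a_m$ --- which is covered by $\{a_m,t\}$ and hence, by $3$-freeness, not in $S(A)$ --- admits a covering pair avoiding $a_m$. Writing $t$ in block form, the case $t\in\Gamma_k$ yields $x=a_{2^k-\s}+(2a'_i-a'_j)$ with $i>j$, where one wants a second covering pair of $2a'_i-a'_j$ inside $\Lambda'_k$ --- and only finitely many $x$ of this shape exist for $k$ fixed, so at worst finitely many fail; the case $t$ in a later block puts $x$ in the ``gap'' between two consecutive blocks, where one wants a covering pair whose larger member lies in that later block and which does not use $a_m$. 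Choosing $j$ in the bulk of the block --- away from the bottom and top $\lb$ positions, so that Lemma~\ref{omlb} controls the omitted digits near both ends --- is what should make that second pair available.

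\medskip
\emph{Step 3: the main obstacle.} Everything hinges on a \emph{multiple-covering lemma}: for a regular Stanley sequence, every sufficiently large covered integer that is not too close to a block boundary has at least two covering pairs, and no single sequence element is forced to occur in all of them. The arguments of \S\ref{sec:regular}--\S\ref{sec:dep} --- the proof that one adequate $k$ suffices, and the proof of Theorem~\ref{thm:shift} --- pin down exactly which integers are covered but not how many covering pairs each has, and amalgamating the covers contributed by different blocks without double counting is the delicate step. I would attack it by an induction over blocks modeled on the proof of Theorem~\ref{thm:shift}, carrying as the inductive hypothesis a lower bound on the number (and approximate location) of covering pairs of each integer in the current window, not merely the identity of the covered set. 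A less self-contained route would be to first strengthen Theorem~\ref{thm:shift} and Conjecture~\ref{conjshifts} to the statement that every dependent sequence arises from its independent core by finitely many iterated shifts and single deletions, and then to verify multiple covering only for the explicitly described sequences $S_k(c,A)$, where the closed-form block structure makes the count routine. Granting the multiple-covering lemma, one fixes a bulk index $j$ in each large block, checks the inessentiality criterion for $a_{2^k-\s}+a'_j$, and concludes that these elements --- one for every large $k$ --- furnish infinitely many deletable elements.
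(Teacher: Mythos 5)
This statement is one of the paper's open conjectures, not a theorem, so the paper contains no proof to compare your attempt against; it is an unresolved problem. That said, your Step 1 is sound: the criterion you call inessentiality --- that all but finitely many integers outside $S(A)$ covered by $S(A)$ admit a covering pair avoiding $a_m$ --- is indeed equivalent to deletability, by the greedy-recomputation argument you sketch (start the greedy process from the prefix of $S(A)\setminus\{a_m\}$ extending past all exceptional integers, and check index by index that it reproduces $S(A)\setminus\{a_m\}$). The gap is exactly where you say it is. Step 3's multiple-covering lemma is postulated, not established, and it is not a small refinement of the existing machinery: the proof that a single adequate $k$ suffices and the proof of Theorem~\ref{thm:shift} determine \emph{which} integers are covered, by exhibiting sets of covered integers and taking unions, and they are deliberately arranged so that overlaps between the pieces may be discarded; nothing in them lower-bounds the number of covering pairs of any given integer, nor rules out a single term common to all of them. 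Similarly, ``in the bulk of the block'' is a heuristic for where a second covering pair ought to appear, but you verify it for no dependent example, not even $S(0,1,4)$, where the element $11$ is already known from Example~\ref{ex:deletion} to be deletable and where one could at least test the criterion concretely. As presented this is a plan of attack with an honestly flagged hole, not a proof, and the conjecture remains open.
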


\section{Concluding remarks}

In this paper, we have rigorously identified and explored the notion of regularity in Stanley sequences and have constructed many new classes of regular sequences.  Our research suggests several avenues for further exploration.  Most significant, perhaps, among these is the problem of whether all irregular sequences satisfy Type 2 growth.  A better upper bound on the asymptotic density of irregular sequences would be welcome.  Roth's theorem \cite{roth} implies that $a_n$ cannot grow linearly with $n$.  A result by Sanders \cite{sanders} strengthens this bound slightly to $n\log^{1-o(1)} n$. However, no explicit bound of the form $\Omega(n^{1+\epsilon})$ has been found (see Problem 2 of Erd\H{o}s et al.~\cite{erdos}).  We hypothesize that restating the problem using Fourier analysis could shed light on this and other questions in Stanley sequence theory.  As the problem is currently stated, complete classification of the regular Stanley sequences seems to us impracticable, but this may become easier when Stanley sequences are set up in Fourier analytical terms.

We conclude by offering an intriguing conjecture of a different flavor.  Given a 3-free set $A$ with elements $a_0<a_1<\cdots<a_k$, define a \emph{completion} of $A$ to be a 3-free set $A'$ with elements $a_0<a_1<\cdots<a_k<\cdots <a_m$ such that $S(A')$ is regular.  For instance, $\{0,4,7\}$ and $\{0,4,9\}$ are two different completions of $\{0,4\}$.

\begin{conj} Every 3-free set has a completion.
\end{conj}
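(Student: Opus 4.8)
The plan is to peel off the conjecture in two moves: first reduce it to a clean existence statement about prefixes of independent sequences, and then attack that statement with a construction that augments the given set by a ``rigidifying'' tail.

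\emph{Reduction.} Since $S(A+n)=S(A)+n$, the set $A$ has a completion if and only if $A-\min A$ does, so I would assume $A=\{a_0<\cdots<a_k\}$ is in root position. A completion of $A$ is the same thing as a regular Stanley sequence whose first $k+1$ terms are $a_0,\dots,a_k$: from such a sequence $R$ any sufficiently long prefix that is a nucleating set is a completion, and conversely a completion generates a regular sequence with the right initial terms. It even suffices to produce an \emph{independent} such sequence, since independent $\Rightarrow$ regular. Finally, the relevant freedom is that one may append to $(a_0,\dots,a_k)$ \emph{any} increasing list $e_1<e_2<\cdots$ in which each $e_i$ is uncovered by $\{a_0,\dots,a_k,e_1,\dots,e_{i-1}\}$; the resulting set is automatically $3$-free, because when elements are appended in increasing order a progression can only be created with the new element as its largest term. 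So the target is: extend $(a_0,\dots,a_k)$ to a finite $3$-free $B$ with $|B|=2^J$ such that $S(B)$ is independent — and by the proposition that deduces independence from a single large $k$, this amounts to arranging that the next $2^J$ terms of $S(B)$ are $B+(2\max B-\lb+1)$ for a suitable $\lb$ with $\max B\ge\lb+\om(B)$.

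\emph{Construction.} I would try to engineer the tail $e_1<\cdots<e_r$ so that the greedy continuation of $B$ self-replicates. The natural tools are the independent families already built: one can spread a fixed prefix $\Lb$ of $S(A)$ by a $k$-product (Theorem~\ref{thm:product}) to embed it into a ternary-governed skeleton, and glue blocks using the shift operation (Theorem~\ref{thm:shift}); and one can try to place the $e_i$ at well-separated ternary scales in the spirit of Theorems~\ref{thm:monotone} and~\ref{thm:digits}, so that the uncovered integers just past $\max B$ are forced to be $\max B$ plus the low end of $B$, with the Cover-shift Lemma (Lemma~\ref{covershift}) propagating this upward block by block exactly as in the proof of that single-$k$ proposition. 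Once one block replicates and $\max B\ge\lb+\om(B)$ (which can be helped by pushing $\om(B)$ down relative to $\max B$, using Lemma~\ref{omlb} as a model), independence is automatic.

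\emph{Main obstacle.} The difficulty, which I expect to be the real open content here, is the interference between the prescribed low-order part (the given set $A$) and any far-away appended block at the scale where doubling must occur: cross-covers $2t-s$ with $t$ in the tail and $s\in A$ land near $2\max B$ and generically destroy the putative self-similar pattern, in a way that depends delicately on $A$. Concretely, the known explicit independent families all have severely constrained initial blocks — downward-closed sets of ternary $0/1$-numbers, their digit variants, and $k$-products of these — and an arbitrary $3$-free $A$ need not occur as an initial segment of any of them, so a genuinely new family of independent sequences whose first block may be prescribed arbitrarily subject only to $3$-freeness would have to be constructed (or else a non-constructive existence argument found). A plausible quantitative line of attack is to fix the seed $A$ and track how $\om$ and a numerical ``defect from doubling'' evolve as one appends carefully chosen uncovered elements, and to prove one can always steer these to a configuration from which the single-$k$ proposition applies; controlling that defect for an arbitrary seed is the crux, and the reduction above isolates it but does not resolve it.
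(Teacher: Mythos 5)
The statement you were asked to prove is stated in the paper as a \emph{conjecture}; the paper offers no proof of it, and to the best of my knowledge it remains open. So there is no argument in the paper against which your sketch can be compared. That said, your reduction is correct and is a reasonable way to organize the problem: translating to root position is harmless since $S(A+n)=S(A)+n$; a completion of $A$ is indeed the same thing as a regular Stanley sequence whose first $k+1$ terms are $a_0,\dots,a_k$, because any prefix of such a sequence of length at least $k+1$ nucleates it; and your observation that any increasing sequence of successively uncovered appended elements keeps the set $3$-free is correct, since a new maximum can only be the largest term of an arithmetic progression. One caution: restricting to \emph{independent} completions, while cleaner, is strictly stronger than what the conjecture requires, and it is conceivable that some seeds admit only dependent completions; you should keep the shift construction of Theorem~\ref{thm:shift} in reserve rather than target independence exclusively.

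The genuine gap is exactly the one you flag yourself: you reduce the conjecture to the statement that every $3$-free $(a_0,\dots,a_k)$ can be extended to a finite $3$-free $B$ from which the single-block criterion of the independence proposition applies, but you give no mechanism to achieve this for an arbitrary seed. All of the machinery in the paper (Theorems~\ref{thm:monotone}, \ref{thm:product}, \ref{thm:digits}, \ref{thm:shift}) manufactures regular sequences whose low blocks are tightly constrained (ternary $0/1$ patterns, products and shifts thereof); none of it accepts an arbitrary $3$-free prefix as input. Your proposed ``steering'' of $\om$ and a doubling defect by appending chosen uncovered elements is a plausible program, but without a lemma showing that the defect can always be driven to zero — in particular, a way to neutralize the cross-covers $2t-s$ with $t$ in the appended tail and $s\in A$ — it is a heuristic, not a proof. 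In short: the reduction is sound and the identification of the bottleneck is accurate, but the crux is unresolved, which is consistent with the paper leaving this as an open conjecture.
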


\section{Acknowledgments}
This research was performed at the University of Minnesota Duluth REU and was supported by the National Science Foundation (grant number DMS-1062709) and the National Security Agency (grant number H98230-11-1-0224).

I would like to thank Joe Gallian for supervising this research, Ricky Liu for offering many insightful questions and ideas, and all the other students and participants in the Duluth REU for their advice and support.

\bibliography{stanley.bib}
\bibliographystyle{plain}

\section*{Appendix: Sequences with small character}
\label{char}
We have found independent Stanley sequences $S(A)$ for each possible character $\lb$ such that $0\le \lb \le 76$, with the exception of the values $1,3,5,9,11,15$. The following table gives examples.
$$\begin{array}{c|c||c|c||c|c}
\lb&A&\lb&A&\lb&A \\ \hline
0&\{0\}&26&\{0, 5, 9, 12\}&52&\{0, 1, 10, 13, 14, 23\}\\
1&\text{None}&27&\{0, 10, 11, 17\}&53&\{0, 23, 24, 30\}\\
2&\{0, 2\}&28&\{0, 3, 4, 7, 22, 25\}&54&\{0, 4, 16, 21, 25\}\\
3&\text{None}&29&\{0, 3, 5, 8, 21, 24, 26\}&55&\{0, 3, 28\}\\
4&\{0, 2, 5\}&30&\{0, 6, 15\}&56&\{0, 5, 9, 17, 24\}\\
5&\text{None found}&31&\{0, 5, 11, 13, 16\}&57&\{0, 3, 19, 22, 29\}\\
6&\{0, 3\}&32&\{0, 6, 8, 15\}&58&\{0, 1, 3, 4, 29\}\\
7&\{0, 1, 7\}&33&\{0, 3, 7, 10, 21, 24, 28, 30\}&59&\{0, 3, 21, 30\}\\
8&\{0, 3, 5\}&34&\{0, 8, 17\}&60&\{0, 7, 19, 27\}\\
9&\text{None found}&35&\{0, 9, 10, 13, 19, 22\}&61&\{0, 5, 13, 18, 24, 28\}\\
10&\{0, 1, 4, 6, 10\}&36&\{0, 18\}&62&\{0, 8, 12, 20, 27\}\\
11&\text{None found}&37&\{0, 1, 19\}&63&\{0, 4, 9, 13, 30, 33\}\\
12&\{0, 6\}&38&\{0, 3, 11, 18\}&64&\{0, 3, 9, 12, 31, 34\}\\
13&\{0, 2, 7, 9, 13\}&39&\{0, 11, 15, 16, 20, 26, 28\}&65&\{0, 5, 17, 22, 28, 30, 33\}\\
14&\{0, 3, 8\}&40&\{0, 2, 7, 15, 16, 20\}&66&\{0, 10, 22, 27, 30\}\\
15&\text{None found}&41&\{0, 3, 11, 14, 21, 24, 30\}&67&\{0, 11, 23, 24, 28, 34\}\\
16&\{0, 4, 7\}&42&\{0, 9, 12, 13, 21\}&68&\{0, 3, 11, 12, 23, 27, 30\}\\
17&\{0, 4, 5, 9, 15, 17\}&43&\{0, 1, 9, 10, 25\}&69&\{0, 3, 4, 19, 22, 23, 28\}\\
18&\{0, 9\}&44&\{0, 14, 18, 21\}&70&\{0, 7, 9, 19, 27, 34\}\\
19&\{0, 3, 10\}&45&\{0, 1, 16, 17, 19, 20, 29\}&71&\{0, 8, 9, 17, 30, 33, 38\}\\
20&\{0, 1, 10\}&46&\{0, 1, 4, 12, 19\}&72&\{0, 13, 25, 27, 33\}\\
21&\{0, 1, 3, 4, 21\}&47&\{0, 20, 21, 27\}&73&\{0, 4, 5, 9, 15, 17, 20, 28\}\\
22&\{0, 8, 9, 14\}&48&\{0, 1, 12, 13, 21\}&74&\{0, 14, 17, 26, 27, 33\}\\
23&\{0, 7, 9, 10, 16\}&49&\{0, 9, 25\}&75&\{0, 4, 13, 17, 25, 29, 38\}\\
24&\{0, 9, 12\}&50&\{0, 2, 12, 14, 21\}&76&\{0, 1, 7, 8, 21, 28\}\\
25&\{0, 2, 3, 5, 23, 25\}&51&\{0, 5, 13, 16, 18, 24, 28\}&
\end{array}$$
\end{document}